\newtheorem*{rep@theorem}{\rep@title}
\newcommand{\newreptheorem}[2]{%
\newenvironment{rep#1}[1]{%
 \def\rep@title{#2 \ref{##1}}%
 \begin{rep@theorem}}%
 {\end{rep@theorem}}}
\newcommand{\R}{\mathbb{R}} 
\newcommand{\Rn}{\mathbb{R}^n} 
\newcommand \B[1][n]{B_2^{#1}}
\newcommand{\N}{\mathbb{N}}
\newcommand{\vol}[1][n]{\operatorname{vol}_{#1}}
\newcommand{\Vol}{\operatorname{vol}_{nm}}
\def \s{\mathbb{S}^{n-1}}
\def \S{\mathbb{S}^{nm-1}}
\newcommand {\conbod}[1][n,m] {\mathcal{K}^{#1}}
\newcommand {\conbodo}[1][n,m] {\mathcal{K}^{#1}_o}
\newcommand {\conbodio}[1][n,m] {\mathcal{K}^{#1}_{(o)}}
\newcommand {\sta}[1][n,m] {\mathcal{S}^{#1}}
\renewcommand{\P}[1][Q]{\Pi_{#1,p}}
\newcommand{\Pinf}[1][Q]{\Pi_{#1,\infty}}
\newcommand{\PP}[1][Q]{\Pi_{#1,p}^{\circ}}
\newcommand{\pp}[1][\alpha_1,\alpha_2]{\Pi_{#1,p}^\circ}
\newcommand{\G}[1][Q]{\Gamma_{#1,p}}
\newcommand {\M}[1][n,m] {\operatorname{M}_{#1}(\R)}
\newcommand {\LYZ}[1]{\P{#1}}
\newcommand {\LYZP}[1]{\PP #1}
\newtheorem{theorem}{Theorem}[section]
\newtheorem{definition}[theorem]{Definition}
\newtheorem{lemma}[theorem]{Lemma}
\newtheorem{proposition}[theorem]{Proposition}
\newtheorem{corollary}[theorem]{Corollary}
\title[Higher Order Inequalities]{Higher-Order $L^p$ Isoperimetric  and Sobolev Inequalities}
\author[Haddad]{Juli\'an Haddad}
\address{Departamento de An\'alisis Matem\'atico \\ Universidad de Sevilla \\ 
Sevilla, Spain 29208}
\email{jhaddad@us.es}
\author[Langharst]{Dylan Langharst}
\address{Institut de Math\'ematiques de Jussieu \\ Sorbonne Universit\'e \\
Paris, France 75252}
\email{dylan.langharst@imj-prg.fr}
\author[Putterman]{Eli Putterman}
\address{School of Mathematical Sciences \\ 
Tel Aviv University\\ Tel Aviv 66978, Israel}
\email{putterman@mail.tau.ac.il}
\author[Roysdon]{Michael Roysdon}
\address{Department of Mathematics, Applied Mathematics, and Statistics\\ Case Western Reserve University \\ Cleveland, OH 44106, USA}
\email{mar327@case.edu}
\author[Ye]{Deping Ye}
\address{Department of Mathematics and Statistics \\ Memorial University of Newfoundland \\
St. John’s, Newfoundland, A1C 5S7, Canada}
\email{deping.ye@mun.ca}
\thanks{MSC 2020 Classification: 52A39, 52A40;  Secondary: 28A75.
Keywords: Projection bodies, Centroid bodies, $L^p$ Busemann-Petty centroid inequality, $L^p$ Sobolev inequalities, $L^p$ Petty projection inequality, Santal\'o inequalities, polarity}
\begin{document}

\begin{abstract} Schneider introduced an inter-dimensional difference body operator on convex bodies and proved an associated inequality. In the prequel to this work, we showed that this concept can be extended to a rich class of operators from convex geometry and proved the associated isoperimetric inequalities. The role of cosine-like operators, which generate convex bodies in $\R^n$ from those in $\R^n$, were replaced by inter-dimensional simplicial operators, which generate convex bodies in $\R^{nm}$ from those in $\R^{n}$ (or vice versa). In this work, we treat the $L^p$ extensions of these operators, and, furthermore, extend the role of the simplex to arbitrary $m$-dimensional convex bodies containing the origin. We establish $m$th-order $L^p$ isoperimetric inequalities, including the $m$th-order versions of the $L^p$ Petty projection inequality, $L^p$ Busemann-Petty centroid inequality, $L^p$ Santal\'o inequalities, and $L^p$ affine Sobolev inequalities. As an application, we obtain isoperimetric inequalities for the volume of the operator norm of linear functionals $(\R^n, \|\cdot\|_E) \to (\R^m, \|\cdot\|_F)$.
\end{abstract}

\maketitle

\tableofcontents

\section{Introduction}

The isoperimetric inequality in $\Rn$ asserts that, for every bounded domain $D \subset \Rn$ with non-empty interior and piecewise smooth boundary, one has
\begin{equation} \label{eq:isoclassic}
\left(\frac{\vol[n-1](\partial D)}{\vol[n-1](\mathbb{S}^{n-1})}\right)^{\frac{1}{n-1}} \geq \left(\frac{\vol(D)}{\vol(\B)} \right)^{\frac{1}{n}}.
\end{equation}
Here $\B =\{x \in \Rn \colon |x| \leq 1\}$ is the usual Euclidean unit ball in $\Rn$, where $\Rn$ is the standard $n$-dimensional Euclidean space and $|x|$ is the Euclidean norm of $x\in \Rn$, $\partial D$ denotes the boundary of $D$,  $\mathbb{S}^{n-1}=\partial \B$, and $\vol[d](E)$ is the $d$-dimensional Hausdorff measure of the measurable set $E\subset\Rn$ (e.g., $\vol[n](E)$ and $\vol[n-1](\partial E)$ denote the volume and the surface area of $E$, respectively). It is well-known that the isoperimetric inequality \eqref{eq:isoclassic} is equivalent to the $L^1$ Sobolev inequality for compactly supported smooth functions (see \cite[Chapter~5]{EvansGar}). In this work, we will be concerned with other isoperimetric-type inequalities from convex geometry, and the Sobolev-type inequalities they imply.

A set $K$ in $\Rn$ is said to be a \textit{convex body} if it is a compact set with non-empty interior such that $(1-\lambda)y+\lambda x\in K$ for every $x,y\in K$ and $\lambda\in[0,1]$. The set of convex bodies in $\Rn$ will be denoted by both by $\conbod[n]$ and $\conbod[n,1]$ (the reason for the non-standard notation will be made clear shortly). A convex body $K$ is origin symmetric if $K=-K$, and is said to be symmetric if a translate of $K$ is origin symmetric. The set of convex bodies in $\Rn$ containing the origin will be denoted both by $\conbodo[n]$ and $\conbodo[n,1]$, and those containing the origin in their interiors will be denoted by $\conbodio[n]$ and $\conbodio[n,1]$.

We set $\omega_q = \frac{\pi^\frac{q}{2}}{\Gamma\left(1+\frac{q}{2}\right)}$ for $q>0$. When $q$ is an integer $n$, we have $\omega_n=\vol(\B)$. In \cite{CMP71}, Petty established the following affine isoperimetric inequality: given $K \in \conbod[n]$, one has, with $\langle \cdot , \cdot \rangle$ the Euclidean inner-product on $\Rn$, 
 \begin{equation} \label{eq:PettyClassical}
\vol(K)^{\frac{n-1}{n}} \!\left(\frac{1}{n} \int_{\s} \!\left(\frac{1}{2}\int_{\partial K} |\langle n_K(v),\theta \rangle| dv \right)^{-n}d\theta \right)^{\frac{1}{n}}\! \leq\! \frac{\omega_n}{\omega_{n-1}},
 \end{equation}
 with equality if, and only  if, $K$ is an ellipsoid. Here, $d\theta$ denotes the spherical Lebesgue measure, $dv=d\mathcal{H}^{n-1}(v)$ denotes the $(n-1)$-dimensional Hausdorff measure on $\partial K$, and $n_K(x)$ denotes the outer unit normal vector at $x \in \partial K$, which is well defined for almost every $x\in\partial K$.
 Inequality \eqref{eq:PettyClassical}, referred to as {\bf Petty's projection inequality}, is invariant under affine transforms of $\Rn$, and it implies the classical isoperimetric inequality \eqref{eq:isoclassic} when $K$ is a convex body. 

 Zhang \cite{GZ99} generalized Petty's projection inequality \eqref{eq:PettyClassical} to smooth compact domains in order to establish an affine version of the classical $L^1$ Sobolev inequality. More precisely, he proved the following: if $f \colon \Rn \to [0,\infty)$ is a $C^1$ compactly supported function, then 
 \begin{equation}\label{eq:ZhangAffineSob}
\frac{n\omega_n}{2\omega_{n-1}}\left(\int_{\s} \left(\int_{\Rn} |\langle \nabla f(v), \theta \rangle | dv\right)^{-n} \frac{d\theta}{n\omega_n} \right)^{-\frac{1}{n}} \geq n\omega_n^\frac{1}{n} \|f\|_{\frac{n}{n-1}},
 \end{equation}
 with equality when $f$ is the characteristic function of an ellipsoid. Here for $p \geq 1$, $\|f\|_p$ denotes the usual $L^p$ norm of the function $f$ and $\nabla f$ denotes the gradient of $f$; the gradient of a characteristic function is understood via approximation.  Zhang \cite{GZ99} showed that the inequality \eqref{eq:ZhangAffineSob} is equivalent to the generalized Petty's projection inequality he established, and directly implies the classical $L^1$ Sobolev inequality.  In \cite{TW12}, Wang generalized the inequality \eqref{eq:ZhangAffineSob} to the space of functions of bounded variation on $\Rn$.

 At the turn of the millennium, the seminal papers \cite{LYZ00,LYZ02} of  Lutwak, Yang, and Zhang continued the studies of Petty \cite{CMP71} and Zhang \cite{GZ99}, respectively, to the $L^p$ setting, $p >1$. In particular, in \cite{LYZ02}, a sharp affine version of the Aubin-Talenti $L^p$ Sobolev inequality \cite{Aubin1,Talenti1} was established. Denoting by $W^{1,p}(\Rn)$ the $L^p$ Sobolev space on $\Rn$ (see Section~\ref{sec:functionspaces} below), the sharp $L^p$ affine Sobolev inequality of Lutwak, Yang, and Zhang is as follows: given any function $f \in W^{1,p}(\R^n)$, with $1 < p < n$,  one has 
 \begin{equation} \label{eq:LYZAffineSobolev}
 \begin{split}
\left(\frac{n\omega_n\omega_{p-1}}{2\omega_{n+p-2}}\right)^\frac{1}{p}\!\left(\int_{\s}\! \left(\int_{\Rn}|\langle \nabla f(v),\theta\rangle|^p dv \right)^{-\frac{n}{p}}\!\frac{d\theta}{n\omega_n}\right)^{-\frac{1}{n}}\! \geq \!a_{n,p}\|f\|_{\frac{pn}{n-p}}.
\end{split}
 \end{equation}
Here, $a_{n,p}$ is the sharp constant from the Aubin-Talenti $L^p$ Sobolev inequality, which we recall below in \eqref{eq:sobolev_cons}. There is an equality in \eqref{eq:LYZAffineSobolev} when $f$ is of the form $$f(v) = \left(\alpha + |A(v-v_0)|^{\frac{p}{p-1}}\right)^{-\frac{n-p}{p}},$$ with $A$ a non-singular $n\times n$ matrix, $\alpha >0$, and $v_0 \in \Rn$. 

Like with Petty's projection inequality \eqref{eq:PettyClassical} and Zhang's affine Sobolev inequality \eqref{eq:ZhangAffineSob}, the inequality \eqref{eq:LYZAffineSobolev} is invariant under affine transforms, and immediately implies the classical $L^p$ Sobolev inequality. Asymmetric versions of the results in \cite{LYZ00,LYZ02} were studied in \cite{HS2009} and \cite{HS09}, respectively, by Haberl and  Schuster. Other important affine $L^p$ Sobolev type inequalities, which are of particular interest, have appeared in \cite{AFTL97, BW93, CENV04, DHJM18, EJ88, HJM16,HJS21,HL22,KS21,  LXZ11,LYZ06,NVH16, XJ07}.  For further information concerning the history of the inequalities \eqref{eq:PettyClassical}, \eqref{eq:ZhangAffineSob} and \eqref{eq:LYZAffineSobolev}, and other Sobolev type inequalities, see e.g., the monographs \cite[Chapter~1]{AGA2} and \cite[Chapter~9]{Sh1} and the myriad of references therein.

In \cite{HLPRY23}, the present authors studied variants of Petty's projection inequality \eqref{eq:PettyClassical} and Zhang's affine Sobolev inequality \eqref{eq:ZhangAffineSob} in the so-called $m$th-order setting. The goal of this article is to continue the study of \cite{HLPRY23}. The general idea is the following. Given a convex set $Q$, its support function is given by $$h_Q(x)=\sup_{y\in Q}\langle y,x \rangle.
$$ Notice that we can write, for $\alpha\in\R$, $|\alpha|=h_{[-1,1]}(\alpha)$. In particular, this means that $|\langle v,\theta\rangle|=h_{[-1,1]}(\langle v,\theta \rangle)$. When viewed from this perspective, there is nothing specific concerning $[-1,1]$, and we replace it with a generic convex body $Q$ from $\R^m$ containing the origin; the results from \cite{HLPRY23,LPRY23,LX24} are precisely the case when $Q$ is the $m$-dimensional orthogonal simplex. In our framework, quantities of the form $\langle v,\theta \rangle$ can and will be replaced by an $m$-tuple of the form $(\langle v,\theta_i \rangle)_{i=1}^m$, where $(\theta_1,\dots,\theta_m)\in\R^{nm}$. As a consequence of our results, we will show the following $m$th-order $L^p$ Sobolev inequality.
In Section~\ref{sec:functionspaces}, we elaborate on the definition of the Sobolev space $W_0^{1,p}(\Rn)$.

\begin{theorem}\label{t:GeneralAffineSobolev} Fix $m,n \in \N,$ $p \in [1,n)$, and a $m$-dimensional convex body $Q$ containing the origin. Set $p^* = \frac{pn}{n-p}$. Consider a non-constant function $f\in W_0^{1,p}(\Rn)$. Then:
\begin{equation*}
d_{n,p}(Q)\left(\int_{\S} \left( \int_{\Rn} h_Q((\nabla f(z))^t.\theta)^pdz \right)^{-\frac{nm} p } d\theta\right)^{-\frac 1 {nm}} \geq a_{n,p}\|f\|_{p^*},
\end{equation*}
where 
$d_{n,p}(Q)$ is a sharp constant given in \eqref{eq:sharp} below. If $p>1$, then there is equality if and only if there exist $\alpha>0$, $A\in GL_n(\R)$ and $v_0\in \R^n$ such that
\[
f(v)=(\alpha+|A.(v-v_0)|^{\frac p {p-1}})^{-\frac{n-p} p }. \]
If $p=1$, then the inequality can be extended to functions of bounded variation (see Lemma~\ref{l:asob}), in which case equality holds if and only if $f$ is a multiple of $\chi_E$ for some $n$-dimensional ellipsoid $E$.
\end{theorem}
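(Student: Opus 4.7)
The plan is to follow the Lutwak--Yang--Zhang scheme for \eqref{eq:LYZAffineSobolev}, substituting each classical ingredient with its $m$th-order, $Q$-dependent $L^p$ analogue developed earlier in this paper. Introduce the $m$th-order $L^p$ projection body of $f$, namely $\P[Q](f) \subset \R^{nm}$, defined through the support function
\[
h_{\P[Q](f)}(\theta)^p := \int_{\Rn} h_Q((\nabla f(x))^t.\theta)^p\,dx, \qquad \theta \in \R^{nm}.
\]
Polar integration over $\S$ identifies the left-hand side of the theorem with a fixed multiple of $\vol[nm](\PP[Q](f))^{-1/(nm)}$, and a change of variables shows this quantity is $SL_n(\R)$-invariant when $f$ is transformed accordingly; the whole inequality is therefore genuinely affine-invariant.

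The main technical step is to reduce the inequality for a general $f$ to the inequality for characteristic functions of bounded measurable sets. Using the layer-cake representation of $|\nabla f|$ together with Minkowski's integral inequality in its reversed form (the outer exponent $-nm/p$ being negative), one aims for a superposition estimate
\[
\vol[nm](\PP[Q](f))^{-\frac{1}{nm}} \;\geq\; c \int_0^\infty \vol[nm](\PP[Q](\chi_{\{|f|>t\}}))^{-\frac{1}{nm}}\,d\mu(t),
\]
where $\mu$ is the measure produced by the co-area formula. Invoking the $m$th-order $L^p$ Petty projection inequality (proved earlier in this paper) on each $\chi_{\{|f|>t\}}$ bounds the integrand by a multiple of $\vol[n](\{|f|>t\})^{(n-p)/(np)}$. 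The resulting one-dimensional estimate on the distribution function of $f$ is precisely the sharp $L^p$ Sobolev inequality applied to the radial decreasing rearrangement $f^\star$, whose optimal constant is $a_{n,p}$ by Aubin--Talenti; the sharp constant $d_{n,p}(Q)$ appearing in the theorem is the product of the normalizations accumulated along this chain, matching \eqref{eq:sharp} by construction.

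For $p>1$, simultaneous equality in (i) the reversed Minkowski integral inequality, (ii) the $m$th-order $L^p$ Petty projection inequality on each level set, and (iii) Aubin--Talenti forces the super-level sets of $|f|$ to be homothets of a single ellipsoid $A\B + v_0$ and the radial profile of $f^\star$ to be an Aubin--Talenti bubble, yielding the stated form $f(v)=(\alpha+|A(v-v_0)|^{p/(p-1)})^{-(n-p)/p}$. For $p=1$, the argument is first extended to BV functions via Lemma~\ref{l:asob}, after which the equality analysis reduces to that of the $m$th-order $L^1$ Petty projection inequality, which picks out $f = c\,\chi_E$ for an ellipsoid $E$.

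The principal obstacle is the Minkowski-type superposition: one must rigorously justify that the polar volume of $\P[Q](f)$ decomposes over level sets in the claimed sense, and that equality in the reversed Minkowski integral inequality is realized exactly on functions whose super-level sets are dilates of a fixed convex body. Once this superposition is secured, the remainder of the argument is bookkeeping --- tracking how $\vol[nm](Q)$, the volume of the polar $m$th-order $L^p$ projection body of $\B$, and $a_{n,p}$ combine to produce $d_{n,p}(Q)$.
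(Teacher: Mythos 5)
Your proposal takes a genuinely different route from the paper, but the route has a fundamental gap for $p>1$ that cannot be repaired in the form you describe.

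The paper does not use a level-set decomposition at all. Instead it forms $\G \LYZP f$, the $m$th-order $L^p$ centroid body of the polar projection body of $f$, then chains together (i) a polar-coordinate identity relating $\mathcal{E}_p(Q,f)$ to $\int h_{\G \LYZP f}(\nabla f)^p$, (ii) the $m$th-order $L^p$ Busemann--Petty centroid inequality (Theorem~\ref{t:NewLpBPC}) applied to $\LYZP f$, and (iii) the anisotropic sharp Sobolev inequality (Lemma~\ref{l:asob}) with the gauge body $K = \G \LYZP f$. The convex body to which the anisotropic Sobolev inequality is applied is thus \emph{itself built from $f$}; no decomposition over super-level sets of $f$ is required. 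This is the Haberl--Jin--Schuster / Lutwak--Yang--Zhang style of argument, and it works uniformly for all $p\in[1,n)$.

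The obstacle you flag as the ``principal'' one is in fact fatal, not merely technical. For $p>1$ the quantity $\int_{\Rn} h_Q((\nabla f(x))^t.\theta)^p\,dx$ does \emph{not} decompose over super-level sets. By the co-area formula,
\begin{equation}
\int_{\Rn} h_Q((\nabla f(x))^t.\theta)^p\,dx \;=\; \int_0^\infty \int_{\{f=t\}} |\nabla f(x)|^{p-1}\, h_Q\bigl((-n_{\{f>t\}}(x))^t.\theta\bigr)^p\, d\mathcal H^{n-1}(x)\,dt,
\end{equation}
and for $p>1$ the weight $|\nabla f|^{p-1}$ cannot be absorbed into any functional of the level set $\{f>t\}$ alone: the integrand depends on the gradient magnitude along each level surface, not merely on its geometry. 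It is precisely to circumvent this that the $L^p$ theory introduces the measure $\mu_{f,p}$ and the asymmetric LYZ body $\langle f\rangle_p$ via the $L^p$ Minkowski problem, rather than working with level sets. Only at $p=1$ does the weight vanish, which is why Zhang's original affine Sobolev proof and its BV extensions are level-set arguments, and why your sketch is close to a correct strategy in that single case.

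There is a second, independent gap: even at $p=1$ the super-level sets $\{|f|>t\}$ are generally non-convex, whereas Theorem~\ref{t:PPIGeneral} in this paper is stated and proved only for $K\in\conbodio[n,1]$. You would need the extension of the $m$th-order Petty projection inequality to compact domains (as Zhang did classically), which the paper does not establish. Finally, ``Minkowski's integral inequality in reversed form'' for the exponent $-nm/p$ does not by itself deliver the superposition estimate you write: the inner integrand is raised to a negative power of a quantity that is itself an integral over $t$, and the resulting inequality runs in the wrong direction for the chain you need unless the level-set decomposition already holds. In short, to make your proposal rigorous you would essentially be forced back onto the LYZ-body approach the paper actually uses.
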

\noindent To establish Theorem~\ref{t:GeneralAffineSobolev}, we will generalize a series of $L^p$ inequalities from convex geometry. In fact, the first result of this paper is an $m$th-order extension of the classical inequality \eqref{eq:PettyClassical} and its $L^p$ variants from \cite{HS09,LYZ00} (see Proposition~\ref{p:HS} below). 

As a starting point, it will be natural to state our theorems in the context of convex bodies in matrix spaces.
We consider the vector space $\M$ of real matrices of $n$ rows and $m$ columns, with the usual linear and Euclidean structure.
The Euclidean norm and inner product are given respectively by
\[\langle A,B \rangle = tr(A^t. B) = \sum_{i=1}^n \sum_{j=1}^m a_{i,j} b_{i,j}\ \text{and} \ \ \|A\|_2 = \sqrt{\sum_{i=1}^n \sum_{j=1}^m a_{i,j}^2}\]
for $A,B \in \M$. Here $A^t$,  $tr(A)$ and $a_{i, j}$ denote, respectively, the transpose, trace, and the $(i, j)$th entry of $A$. The matrix multiplication of two matrices $A \in \M$ and $B \in \M[m,k]$ is represented by $A.B \in \M[n,k]$. 

For our first set of results, we have two convex bodies (say $Q$ and $K$). The convex body $Q$ becomes part of an operator, and the convex body $K$ becomes the body being acted on by said operator. For notational convenience, if the convex body $K$ being acted on is from $\R^n$, then we identify $\R^n$ with $\M[n,1]$, that is the body being acted on is a collection of column vectors. Similarly, if the convex body $Q$ that becomes a component of the operator being used is from $\R^m$, then we identify $\R^m$ with $\M[1,m]$. That is, the body ``doing the acting'' is a collection of row vectors. As we will see, the image of $K$ under the action of the operator containing $Q$ will be a convex body in $\M$ (which can be identified with $\R^{nm}$). Throughout, we identify the unit Euclidean spheres $\s \subseteq \R^n = \M[n,1]$, and $\mathbb{S}^{nm-1} \subseteq \M$.

 For a set $G \subseteq \M$, $A \in \M[k,n]$,  and  $B \in \M[m,l]$, we write 
\[A.G = \{A.x \in \M[k,m] : x \in G\} \ \ \mathrm{and}\ \ \ G.B = \{x.B \in \M[n,l] : x \in G\}.\]

\noindent We write $\conbod[k,l]$ for the set of convex bodies in $\M[k,l]$. Similarly, we write $\conbodo[k,l]$ for the collection of all such convex bodies which contain the origin and $\conbodio[k,l]$ for those that contain the origin in their interiors. The polar body of $G\in\conbodio[k,l]$ is
$$G^\circ=\left\{x\in\M[k,l]:h_G(x)\leq 1\right\}.$$
Notice that under these conventions, $G^\circ$ is also in $\M[k,l]$. Additionally,
\begin{equation}
\label{eq:sup_mat}
h_K (A.x) = h_{A^t. K}(x) \quad \text{and} \quad h_K (y.B) = h_{K.B^t}(y)\end{equation} for all $x\in \M[k,m], y \in \M[n,l]$, $K\in \conbod$, $A\in \M[n, k]$ and $B\in \M[l,m]$.

The Lebesgue measure in $\M$ is inherited from the natural identification between $\M$ and $\R^{nm}$. If $D \subset \M[n,m]$ is Lebesgue measurable, $A \in \M[n,n]$, and $ B \in \M[m,m]$, then $$\Vol(A.D) = |\det(A)|^m \Vol(D) \ \ \mathrm{and} \Vol(D.B) = |\det(B)|^n \Vol(D),$$ where $\det (A)$ is the determinant of $A$. 
For $G\in \conbodio$, its gauge or \textit{Minkowski functional} is given by $\|x\|_G=h_{G^\circ}(x)$ for $x\in \M[n,m]$. We have the properties that
\[\|A.x\|_G = \|x\|_{A^{-1}.G} \ \ \mathrm{and} \ \  \|x.B\|_G = \|x\|_{G.B^{-1}}\]
for invertible $A\in \M[n,n]$ and $B\in \M[m,m]$.

We recall that the \textit{$L^p$ surface area measure} of $K\in\conbodio[n,1]$ is given by $${d\sigma_{K,p}(u)=h^{1-p}_K(u) d\sigma_K(u)},$$ where $\sigma_K$ is the surface area measure of $K$ (see Section~\ref{sec:geo_not} for the definition). We now define a more general $L^p$ version of the $m$th-order projection body operator:

\begin{definition} \label{d:generalprojectionbody} Let $p \geq 1$, $m \in \N$, and fix $Q \in \conbodo[1,m]$. Given $K \in \conbodo[n,1]$ with the property that $\sigma_{K,p}$ is a finite Borel measure on $\s$ (e.g. $K\in\conbodio[n,1]$), we define the $(L^p, Q)$-projection body of $K$, $\P K$, via the support function 
\[
h_{\P K}(x)^p = \int_{\s} h_Q(v^t.x)^p d\sigma_{K,p}(v)
\]
for $x\in\M$.
\end{definition}
\noindent 
We establish in Proposition~\ref{p:convexbodyorigin} below that $\P K\in \conbodio$.

We briefly discuss how $\P K$ incorporates other generalizations of the projection operator. By setting $p=1$ and $Q=[-\frac{1}{2},\frac{1}{2}]$, one obtains the classical projection operator. By setting $Q=[-\alpha_1,\alpha_2]$ with $\alpha_1,\alpha_2 > 0$ when $p>1$, we obtain the asymmetric $L^p$ projection bodies by Ludwig \cite{ML05}.
For $m=2,p=1, K \subseteq \R^{2n}$, identifying $\R^2$ with $\mathbb C$ and $\R^{2n}$ with $\mathbb C^n$ yields that a $2n$ dimensional section of $\Pi_{Q,1} K$ coincides with the complex projection body $\Pi_{Q}^{\mathbb C} K$ defined by Abardia and Bernig \cite{AB11}. Namely, for $x \in \mathbb C^n$,
\[h_{\Pi_{Q,1} K}(x,i x) = h_{\Pi_{Q}^{\mathbb C} K}(x).\]

If $K$ is a polytope with facets $F_1,\dots,F_k$ and outer unit-normals $u_1,\dots,u_k$, then, let $\alpha_i=\vol[n-1](F_i)h_K(u_i)^{1-p}$. By using the first formula in \eqref{eq:sup_mat}, we see that
$$h_{\P K}(x)^p = \sum_{i=1}^N \alpha_i h_{v_i.Q}(x)^p,$$
i.e. $\P K = (\alpha_1\cdot v_1.Q) +_p\dots +_p (\alpha_k\cdot v_k.Q)$
(notice that $v_i.Q$ is a linear embedding of $Q$ inside $\M$).
We recall that $+_p$ is Firey's $L^p$ Minkowski summation \cite{Firey62}, see Section~\ref{sec:geo_not}.

Given a fixed non-zero $x\in\M$, we can use the $L^p$ mixed volumes, $V_{p,n}(K,L)$, defined in \eqref{eq:LPmixed} below, to obtain
\begin{equation}
h_{\P K}(x)^p=nV_{p,n}(K,x.Q^t),
\label{eq:bival}
\end{equation}
which follows from the second formula in \eqref{eq:sup_mat} and the fact that $h_{G}(v^t)=h_{G^t}(v)$. We take a moment to observe that, as $x$ varies, the dimension of $x.Q^t$ changes; every dimension from $1$ to $m$ will be obtained in general. This is easily seen by taking $Q$ the orthogonal simplex. In particular, this yields that, when $p=1$, we cannot consider mixed volumes with different indices on $K$. This also yields that we are not, in general, studying projections of $K$ onto subspaces of larger co-dimensions.

We use the natural notation $\PP K:= (\P K)^\circ$ for the $(L^p,Q)$-polar projection body of $K$. In fact, under this notation, the constant from Theorem~\ref{t:GeneralAffineSobolev} is given by
\begin{equation}
\label{eq:sharp}
d_{n,p}(Q) := (n\omega_n)^{\frac{1}{p}}\left(nm\Vol(\PP \B )\right)^\frac 1 {nm}.
\end{equation}
We will show in Proposition~\ref{p:affine_invar_Zhang} below that the functional $$K\in\conbodio[n,1] \mapsto \vol(K)^{\frac{nm} p -m}\Vol(\PP K )$$ is invariant under affine transformations (when $p=1$) or linear transformations (when $p>1$). We thus establish the following $m$th-order $L^p$ Petty projection inequality:

\begin{theorem}\label{t:PPIGeneral} Let $m \in \N$ and $p \geq 1$. Then, for any pair of convex bodies $K \in \conbodio[n,1]$ and $Q \in \conbodo[1,m]$, one has 
\begin{equation}\label{e:PPIGeneral}
\Vol(\PP K ) \vol(K)^{\frac{nm} p -m} \leq \Vol(\PP \B ) \vol(\B)^{\frac{nm} p -m}.
\end{equation}
If $p=1$, then there is equality above if and only if $K$ is an ellipsoid; if $p >1$, then there is equality above if and only if $K$ is an origin symmetric ellipsoid. 
\end{theorem}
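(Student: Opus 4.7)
The plan is to prove the inequality by $L^p$ Steiner symmetrization on $K$, coupled with a compatible volume-preserving symmetrization on $\M$ that captures how $\P$ reacts to symmetrization of its argument.

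First, by polar coordinates one has
\begin{equation*}
\Vol(\PP K) \;=\; \frac{1}{nm}\int_{\S} h_{\P K}(\theta)^{-nm}\, d\theta,
\end{equation*}
so the problem is an estimate for the negative $nm$-th moment of $h_{\P K}$ over the sphere. For each $u\in\s$, let $S_uK$ denote the Steiner symmetrization of $K$ in the hyperplane $u^\perp$. The geometric core of the proof is the inclusion
\begin{equation*}
\P(S_uK)\;\subseteq\; \tilde{S}_u(\P K),
\end{equation*}
where $\tilde{S}_u$ is a volume-preserving ``column-wise'' Steiner-type symmetrization on $\M$ that symmetrizes each column of the matrix with respect to $u^\perp \subset \Rn$. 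Taking polars reverses the inclusion, and combining with the Steiner--Santaló comparison $\Vol((\tilde{S}_uL)^\circ)\geq\Vol(L^\circ)$ applied to $L=\P K$ yields the monotonicity $\Vol(\PP K)\leq \Vol(\PP(S_u K))$, while Steiner symmetrization preserves $\vol(K)$.

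To prove the inclusion I would pass to polytopes. For a polytope $K$ with outer unit normals $\{u_i\}$ and weights $\alpha_i=\vol[n-1](F_i)h_K(u_i)^{1-p}$, the polytopal representation discussed after Definition~\ref{d:generalprojectionbody} writes $\P K$ as an $L^p$-Firey sum of the embedded bodies $u_i.Q \subset \M$. Steiner symmetrization pairs each facet with its reflection through $u^\perp$, and averaging the two corresponding summands in $L^p$ produces a body contained in $\tilde{S}_u(\P K)$ by $L^p$ convexity of support functions. Alternatively, using the identity $h_{\P K}(x)^p = nV_{p,n}(K,x.Q^t)$ from \eqref{eq:bival}, one can obtain the inclusion by applying a one-dimensional $L^p$ Brunn--Minkowski inequality to the vertical chords of $K$ along $u$. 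The general case follows by weak-continuity of the $L^p$ surface area measure under polytope approximation.

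Once monotonicity is established, iterating $\Vol(\PP K)\leq\Vol(\PP(S_uK))$ in a dense sequence of directions sends $K$ to a Euclidean ball of the same volume in the Hausdorff metric; continuity of $\P$ on $\conbodio[n,1]$ and the affine invariance from Proposition~\ref{p:affine_invar_Zhang} then close the proof. For the equality analysis, tracking equality through the Steiner step forces $K$ to have ellipsoidal chord profile in every direction (hence be an ellipsoid) when $p=1$, while for $p>1$ the translation-sensitivity of the $L^p$ surface area measure additionally forces $K$ to be centered at the origin. The principal obstacle is that, when $Q$ is not origin-symmetric, $\P K$ itself fails to be symmetric, and the polar-Steiner inequality $\Vol((\tilde{S}_uL)^\circ)\geq\Vol(L^\circ)$ is subtle in this setting; I expect to handle this by taking $\tilde{S}_u$ to be a two-sided rearrangement that simultaneously controls $h_L(x)$ and $h_L(-x)$, following the template used by Haberl and Schuster for the asymmetric $L^p$ Petty projection inequality.
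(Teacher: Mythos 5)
Your overall strategy (Steiner symmetrization in the style of Lutwak--Yang--Zhang and Haberl--Schuster) is the right one, and it is the paper's strategy too, but the specific architecture you set up has two genuine gaps. First, you formulate the key step as an inclusion of projection bodies, $\P{(S_uK)} \subseteq \tilde S_u(\P K)$, for a ``volume-preserving column-wise'' symmetrization $\tilde S_u$ on $\M$, and you then need the Santal\'o-type comparison $\Vol((\tilde S_u L)^\circ) \geq \Vol(L^\circ)$. Neither half of this is available. The relevant column-wise operation on $\M$ is the $m$th-order (fiber) Steiner symmetrization $\bar S_v$ of Definition~\ref{d:multidimSteinerSymmetrization}, which symmetrizes over an $m$-dimensional fiber $[v]$, and it is \emph{not} volume-preserving: it only satisfies $\Vol(L) \leq \Vol(\bar S_v L)$ (inequality \eqref{eq:vol_not_dec}, a cited result of Ulivelli). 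A Meyer--Pajor-type polar-volume inequality for such a fiber symmetrization of a non-symmetric body (recall $\P K$ is not symmetric when $Q$ is not) is exactly the kind of statement one cannot take for granted, and you have deferred it. The paper sidesteps all of this by proving the inclusion directly on the \emph{polar} bodies, $\bar S_v \PP K \subseteq \PP{S_v K}$ (Lemma~\ref{l:multidimSteiner}), after which only the volume monotonicity of $\bar S_v$ is needed; no Santal\'o-type inequality enters. The proof of that inclusion is not via pairing facets of a polytope with their reflections (Steiner symmetrization does not act on facets that way), but via the LYZ parametrization of $\partial K$ over $P_{v^\perp}K$ by two functions $f,g$, the resulting integral formula for $h_{\P K}$, sublinearity of $h_Q$, and the elementary inequality $(\alpha+\beta)^p(\gamma+\lambda)^{1-p} \leq \alpha^p\gamma^{1-p}+\beta^p\lambda^{1-p}$.

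Second, the equality analysis is essentially missing. ``Tracking equality through the Steiner step forces an ellipsoidal chord profile'' is not an argument: equality in a single symmetrization step yields only $\bar S_v \PP K = \PP{S_v K}$, and one must convert this into a usable rigidity statement. The paper does this with a genuinely new ingredient, Proposition~\ref{p:one_dim_proj}: every $Q \in \conbodo[1,m]$ admits a (generally non-orthogonal) linear projection $P$ onto a one-dimensional subspace with $P(Q) \subseteq Q$, proved by a topological degree argument. This produces a direction $r$ and a slice map $J_r$ with $J_r^{-1}(\PP G) = \Pi^\circ_{\alpha_1,\alpha_2,p}G$ for $\alpha_1 = h_Q(r)^p$, $\alpha_2 = h_Q(-r)^p$ (Lemma~\ref{l:permutationofvariables}), so that equality forces $\Pi^\circ_{\alpha_1,\alpha_2,p}S_vK = S_v\Pi^\circ_{\alpha_1,\alpha_2,p}K$ for all $v$, whence $K$ is an extremizer of the Haberl--Schuster asymmetric $L^p$ Petty projection inequality (Proposition~\ref{p:HS}) for $p>1$, or of the classical Petty inequality for $p=1$. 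Without some such reduction to a known characterization, your equality case does not close.
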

\noindent In order to establish the equality conditions of Theorem~\ref{t:PPIGeneral}, we show in Proposition~\ref{p:one_dim_proj} below that for every convex body $Q$, there exists a projection $P(Q)$ of $Q$ onto a one-dimensional linear subspace such that $P(Q)\subseteq Q$. For symmetric $Q$ one may just take the orthogonal projection onto a diameter. For general $Q$ the projection is not necessarily orthogonal, and the proof involves topological methods. We could not find a reference to this fact in the literature and believe this result may be of independent interest. In Section~\ref{sec:connect}, we discuss the relations between Theorem~\ref{t:PPIGeneral} and other Petty projection inequalities. In particular, we emphasize that our result (for $p=1$ and $n$ even) is distinct from the complex case by Haberl \cite{CH19}. We also demonstrate that, in some sense, the result for non-symmetric $Q$ is stronger than for symmetric $Q$.

There has been recent interest in studying extensions of Petty's projection inequality, in addition to the $L^p$ \cite{HS09,LYZ00} setting, such as the mixed \cite{LE85,TS21,LW07}, complex \cite{CH19,WL21,WY21} and Orlicz \cite{BK13, LYZ10} settings and for non-convex sets \cite{YL21,LY22,LX22,TW12, GZ99}. Lutwak conjectured \cite{LE88} a generalization of Petty's projection inequality, where projections onto hyperplanes are replaced by projections onto subspaces of larger co-dimensions. This problem was only recently resolved by Milman and Yehudayoff \cite{MY23}. Petty's projection inequality has also been established in the setting of valuations \cite{BS20, HS19}. Recently,  Henkel and Wannerer \cite{JW24} showed that, if we view the operator $\P K$ as a bivaluation (cf. \cite{ML10}) acting on $\conbodio[n,1]\times\conbodo[1,m]$, then it arises naturally in a framework on reducible bivaluations on $\R^n$.

As a consequence of Theorem~\ref{t:PPIGeneral}, we obtain the following results by sending $p\to\infty$. Consider the gauge function 
\[\|x\|_{E, F}:= \max_{v \in E} \|x.v\|_F, \quad x \in \M[m,n],\] where $E\in\conbodio[n]$  and $F\in\conbodio[m]$. Denote by $B_{E,F} \subseteq \M[m,n]$ its unit ball; notice in the instance that both $E$ and $F$ are symmetric, this quantity is the operator norm of $x \in \M[m,n]$ as an operator between the Banach spaces $(\R^n, \|\cdot\|_E) \to (\R^m, \|\cdot\|_F)$ (by left-multiplication, $v \mapsto x.v$).

\begin{theorem}
\label{t:operatornorms} Let $n,m \in \N$. Then, for any pair of convex bodies $E \in \conbodio[n, 1]$ and $F \in \conbodio[1, m]$, 
\[\Vol(B_{E,F}) \vol[m](F)^{-n} \leq \Vol(B_{E, \B[m]}) \vol[m](\B[m])^{-n},\]
with equality if $F$ is an origin symmetric ellipsoid. Moreover, if $E$ has center of mass at the origin, then
\[\Vol(B_{E,F}) \vol(E)^{m} \leq \Vol(B_{\B, F}) \vol(\B)^{m},\]
with equality if and only if $E$ is an origin symmetric ellipsoid.

In particular, 
\[\left(\frac{\vol(E)^m}{\vol[m](F)^n}\right)\Vol(B_{E,F}) \leq \left(\frac{\vol(\B)^m}{\vol[m](\B[m])^n}\right) \Vol(B_{\B,\B[m]}).\]
\end{theorem}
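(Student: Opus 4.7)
The plan is to derive Theorem~\ref{t:operatornorms} from Theorem~\ref{t:PPIGeneral} by sending $p \to \infty$, combined (for the second inequality) with the Blaschke-Santaló inequality for convex bodies whose centroid lies at the origin. As $p \to \infty$, the body $\P K$ converges in the Hausdorff metric to a convex body $\Pinf K$; passing from $L^p$- to $L^\infty$-norms in Definition~\ref{d:generalprojectionbody} gives the explicit formula
\[
(\Pinf K)^\circ = \{y \in \M : y \cdot Q^t \subseteq K\}.
\]
From this formula I extract two identifications: (i) $B_{E,F} = (\Pinf[E^t] F^t)^\circ$, obtained by applying the construction with $K := F^t \in \conbodio[m,1]$ and $Q := E^t \in \conbodo[1,n]$ (the dimensional roles of $n$ and $m$ in Theorem~\ref{t:PPIGeneral} trade places); and (ii) $B_{E,F}^t = (\Pinf[F^\circ] E^\circ)^\circ$, obtained by the standard ordering $K := E^\circ$, $Q := F^\circ$ together with the polarity duality $x\cdot E\subseteq F \iff x^t\cdot F^\circ\subseteq E^\circ$. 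The transpose preserves $\Vol$, so $\Vol(B_{E,F})$ equals the volume of each of these polar projection bodies.

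For the first inequality, apply Theorem~\ref{t:PPIGeneral} to $F^t$ with $Q := E^t$, send $p \to \infty$, and use identification (i):
\[
\Vol(B_{E,F})\, \vol[m](F)^{-n} \leq \Vol(B_{E,\B[m]})\, \vol[m](\B[m])^{-n},
\]
with equality when $F$ is an origin-symmetric ellipsoid, inherited from the equality condition of Theorem~\ref{t:PPIGeneral} at $p>1$. For the second, apply Theorem~\ref{t:PPIGeneral} in its standard ordering at $p=\infty$ and invoke identification (ii):
\[
\Vol(B_{E,F})\, \vol(E^\circ)^{-m} \leq \Vol(B_{\B,F})\, \vol(\B)^{-m}.
\]
Since $E$ has centroid at the origin, the Blaschke-Santaló inequality gives $\vol(E)\,\vol(E^\circ) \leq \omega_n^{2}$, with equality if and only if $E$ is an origin-symmetric ellipsoid; raising to the $m$-th power and substituting replaces $\vol(E^\circ)^{-m}$ by a multiple of $\vol(E)^m$ and yields $\Vol(B_{E,F})\vol(E)^m \leq \Vol(B_{\B,F})\vol(\B)^m$. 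Equality now needs equality in both Petty and Santaló, which occurs exactly when $E$ is an origin-symmetric ellipsoid. The third (combined) inequality follows by chaining the first with the second applied with $F = \B[m]$.

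The main obstacle is careful bookkeeping of row-versus-column conventions needed to effect the identifications (i) and (ii): the operator $\P$ accepts its two inputs from \emph{different} matrix spaces, so recasting $B_{E,F}$ as a polar projection body requires transposing $K$ and/or $Q$ at the correct places, and in (i) one must also exchange the roles of $n$ and $m$ in Theorem~\ref{t:PPIGeneral}. A secondary, essentially technical, point is justifying the passage $p\to\infty$: continuity of $h_{\P K}$ in $p$ (pointwise and locally uniformly on $\s$), combined with continuity of polarity and of $\Vol$ in the Hausdorff metric, suffices to pass Theorem~\ref{t:PPIGeneral} to the limit and to preserve the equality characterization in the regime $p > 1$.
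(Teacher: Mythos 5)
Your proposal is correct and follows essentially the same route as the paper: pass Theorem~\ref{t:PPIGeneral} to the limit $p \to \infty$ (the paper records this as Corollary~\ref{cor:PPInfty}, via the support-function formula of Proposition~\ref{p:infsup}, which is exactly your polar description $(\Pinf K)^\circ = \{y : y.Q^t \subseteq K\}$); then realize $B_{E,F}$ as $(\Pi_{Q,\infty} K)^\circ$ twice — once with $K = F^t$, $Q = E^t$ and the roles of $n, m$ swapped for the first inequality, and once with $K = E^\circ$, $Q = F^\circ$ plus the transpose duality and the Blaschke--Santal\'o inequality for the second — and chain the two for the "in particular" statement. The only minor imprecision is in the equality analysis of the second part: since Corollary~\ref{cor:PPInfty} gives equality only in the "if" direction, the correct argument is that equality in the chain forces equality in Santal\'o, hence $E$ is an origin-symmetric ellipsoid; the converse is a direct check (or the "if" direction of Corollary~\ref{cor:PPInfty} together with equality in Santal\'o), which the paper also relies on.
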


\noindent Theorem~\ref{t:operatornorms} extends on the case $m=n$ for when $E$ and $F$ are Schatten balls \cite{SR84}, $\ell_p$ balls \cite{CS82}, and Lorentz balls \cite{DV20}.

There are a plethora of affine or linear invariant inequalities of isoperimetric type in convex geometry, among which are the celebrated Busemann-Petty centroid and Santal\'o inequalities. Given a compact set $D \subset \Rn$ with positive volume, its centroid body, $\Gamma D$, is the convex body with support function 
\[
h_{\Gamma D}(\theta) = \frac{1}{\vol(D)} \int_D |\langle x, \theta \rangle| dx, \quad \theta \in \s.
\]
The {\bf Busemann-Petty centroid inequality} asserts that the functional
\begin{equation}\label{eq:BPclassical}
K \in \mathcal{K}^{n,1} \mapsto \vol(\Gamma K) \vol(K)^{-1}
\end{equation}
is minimized precisely when $K$ is an ellipsoid. 

In \cite{LYZ00}, the Busemann-Petty centroid inequality was extended to the $L^p$ setting for all $p > 1$, which turns out to be equivalent to the $L^p$ version of the Petty projection inequality \eqref{eq:PettyClassical} that also appeared in \cite{LYZ00}.  The {\bf $L^p$ Busemann-Petty centroid inequality} asserts that the functional
\begin{equation}\label{eq:LYZBPcentroid}
D\in \mathcal{S}^n \mapsto \vol(\Gamma_p D) \vol(D)^{-1}
\end{equation}
is minimized precisely when $D$ is an origin symmetric ellipsoid.  Here, $\mathcal{S}^n=\mathcal{S}^{n,1}$ denotes the class of star bodies in $\Rn$ (see Section~\ref{sec:geo_not}),  and $\Gamma_p D$ denotes the $L^p$ centroid body of $D$ (see Section~\ref{sec:connect}).  In \cite{HJS21}, the $L^p$ Busemann-Petty centroid inequality \eqref{eq:LYZAffineSobolev}, together with the affine isoperimetric inequalities of \cite{AFTL97} and \cite{CENV04}, was exploited to prove a variety of affine Sobolev type inequalities, among which was the $L^p$ affine Sobolev inequality \eqref{eq:LYZAffineSobolev}.

In \cite[Theorem 1.8]{HLPRY23}, we studied an analogue of the classical Busemann-Petty centroid inequality \eqref{eq:BPclassical} by introducing an $m$th-order extension of the centroid body operator. We now extend this operator to the $L^p$ setting and $Q\in\conbodo[1,m]$. 

\begin{definition}
  \label{d:generalcentroidbody}
  Let $p \geq 1$, $m \in \N$, and fix some $Q \in \conbodo[1,m]$. Given a compact set $L\subset \M$ with positive volume, we define the $(L^p, Q)$-centroid body of $L$, $\G L$, to be the convex body in $\M[n,1]$ with the support function 
  \begin{equation}
  h_{\G L}(v)^p= \frac 1 {\Vol(L)}\int_L h_Q(v^t.x)^p dx.
  \label{eq:cen_hi}
\end{equation}
\end{definition}

We emphasize that, in contrast to the first set of results concerning the $m$th-order projection operator $\P$, the operator $\G$ takes a compact set in $\R^{nm}$ (which we identify with $\M$, that is, this compact set is a collection of $n\times m$ matrices) and produces a convex body in $\R^n$ which we identify with $\M[n,1]$ (that is, $\G L$ is a collection of column vectors). Just like with the projection operator, by setting $p=1$ and $Q=[-1,1]$ in \eqref{eq:cen_hi}, one obtains the classical centroid operator. By setting $Q=[-\alpha_1,\alpha_2]$ with $\alpha_1,\alpha_2 > 0$ when $p>1$, we obtain the asymmetric $L^p$ centroid bodies by Ludwig \cite{ML05}. 
The special case $L = L_1, \times \cdots \times L_m$ with $L_i \in \conbod[n]$ was considered recently in \cite{APPS22}.
The Busemmann-Petty centroid inequality for $\G L$ is the following.

\begin{theorem}
    \label{t:NewLpBPC} Fix $n,m \in \N$. 
Let $L\subset \M[n,m]$ be a compact domain with positive volume, $Q \in \conbodo[1,m]$, and $p \geq 1$. Then
    \begin{equation}\label{eq:NewLpBPC}
     \frac{\vol(\G L)}{\Vol(L)^{1/m}} \geq \frac{\vol(\G \PP \B)}{\Vol(\PP \B)^{1/m}}.
\end{equation}
If $L$ is a star body or a compact domain with piecewise smooth boundary, then there is equality if and only if $L = \PP E$ up to a set of zero volume for some origin symmetric ellipsoid $E \in \conbodio[n,1]$.
\end{theorem}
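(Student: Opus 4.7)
The plan is to deduce Theorem~\ref{t:NewLpBPC} from the $m$th-order $L^p$ Petty projection inequality (Theorem~\ref{t:PPIGeneral}), via a Fubini-type identity linking $\G$ and $\P$. The strategy mirrors Lutwak--Yang--Zhang's classical derivation of the $L^p$ Busemann--Petty centroid inequality from the $L^p$ Petty projection inequality.

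First, for any $K \in \conbodio[n,1]$ and compact $L \subset \M$ with $\Vol(L) > 0$, I would establish the Fubini identity
\[
n\Vol(L)\, V_{p,n}(K, \G L) = \int_L h_{\P K}(x)^p \, dx, \quad (\star)
\]
where $V_{p,n}$ is the standard $L^p$ mixed volume. Both sides equal the double integral $\int_\s \int_L h_Q(v^t. x)^p\, dx\, d\sigma_{K,p}(v)$, expanded via Definitions~\ref{d:generalcentroidbody} and~\ref{d:generalprojectionbody} respectively. Specializing to $K = \G L$ and using $V_{p,n}(M,M) = \vol(M)$ yields
\[
n \Vol(L)\vol(\G L) = \int_L h_{\P \G L}(x)^p\, dx,
\]
a genuine equality that sidesteps any wrong-direction use of the $L^p$ Minkowski inequality. (One also checks $\G L \in \conbodio[n,1]$ so that $\P \G L$ is defined; this is routine when $Q$ has the origin in its interior, and follows by approximation otherwise.)

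I would then lower-bound the right-hand side by a level-set rearrangement: for any $M \in \conbodio[n,m]$ and compact $L \subset \M$,
\[
\int_L h_M(x)^p\, dx \geq \frac{nm}{nm+p}\Vol(L)^{(nm+p)/nm}\Vol(M^\circ)^{-p/nm}, \quad (\star\star)
\]
with equality iff $L$ agrees with a dilate of $M^\circ$ up to a null set. This is a layer-cake argument: at fixed $\Vol(L)$, the integral on the left is minimized by the sublevel set $\{h_M \leq r\} = rM^\circ$, and a polar-coordinate computation in $\M \cong \R^{nm}$ evaluates this minimum using $\int_\S h_M(\theta)^{-nm}d\theta = nm\Vol(M^\circ)$. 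Applying $(\star\star)$ with $M = \P\G L$ and combining with the identity above gives
\[
\vol(\G L) \geq \frac{m}{nm+p}\Vol(L)^{p/nm}\Vol(\PP \G L)^{-p/nm}.
\]

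Finally, Theorem~\ref{t:PPIGeneral} applied to $\G L$ yields $\Vol(\PP \G L) \leq \Vol(\PP \B)\vol(\B)^{nm/p - m}\vol(\G L)^{m-nm/p}$; substituting this upper bound collapses the $\vol(\G L)$ factors on both sides and produces a clean lower bound for $\vol(\G L)$ in terms of $\Vol(L)$, $\Vol(\PP\B)$, and $\vol(\B)$. A consistency check at $L = \PP \B$, where every step in the chain is tight, verifies that the resulting dimensional constant equals $\vol(\G \PP \B)\Vol(\PP \B)^{-1/m}$, completing the inequality. For equality: Theorem~\ref{t:PPIGeneral} forces $\G L$ to be an origin-symmetric ellipsoid $E'$, while $(\star\star)$ forces $L$ to equal a dilate of $\PP E'$ up to measure zero. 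Using the homogeneity $\PP(tE') = t^{(p-n)/p}\PP E'$, each such dilate coincides with $\PP E$ for an appropriate origin-symmetric ellipsoid $E$; the star-body or piecewise-smooth-boundary hypothesis upgrades almost-everywhere equality to genuine set equality. The substantive step is the rearrangement $(\star\star)$, which must be formulated for arbitrary compact $L$ rather than only star bodies---this is precisely what accounts for the regularity hypothesis in the equality clause---whereas the identity $(\star)$ and the subsequent application of Theorem~\ref{t:PPIGeneral} amount to bookkeeping.
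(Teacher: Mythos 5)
Your proof is correct and, at its core, the same as the paper's: the Fubini identity $(\star)$ specialized to $K=\G L$ is exactly Proposition~\ref{p:Gvolume}, the inequality $(\star\star)$ restricted to star bodies is precisely the dual $L^p$ Minkowski first inequality \eqref{dual_Min_first} used in Proposition~\ref{p:BP_P_rel}, and the final application of Theorem~\ref{t:PPIGeneral} to $\G L$ is Proposition~\ref{p:BP_P_rel2}. The only genuine structural difference is how you reach compact domains that are not star bodies. The paper first proves the star-body case and then performs a separate radial rearrangement: for each $y\in\S$ it replaces the slice $L_y=\{\alpha:\alpha y\in L\}$ by an interval $[0,\rho_M(y)]$ of matched weighted length, producing a star-shaped $M$ with $\Vol(M)=\Vol(L)$ and $\G M\subseteq\G L$, and invokes \cite[Theorem~1.14]{LiebAnalysis} to justify the slicewise comparison. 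You instead formulate $(\star\star)$ as a layer-cake (bathtub) bound valid for arbitrary compact $L$ at the outset, which collapses the two-step structure into one and makes the regularity hypothesis in the equality clause appear in a single place. This is a slight streamlining rather than a different argument; note that your bathtub inequality, your identity $(\star)$, and the paper's dual-mixed-volume identity are all the same integral computation in polar coordinates. The consistency check at $L=\PP\B$ is a clean substitute for the paper's explicit evaluation of $\vol(\G\PP\B)$ via Lemma~\ref{l:class}. One small caveat: your parenthetical about $\G L\in\conbodio[n,1]$ conditions on $Q$ having the origin in its interior, but the relevant hypothesis is about $L$; Proposition~\ref{p:convexbodyorigin} assumes $o\in\mathrm{int}\,L$, and for general compact $L$ of positive volume one should verify positivity of $h_{\G L}$ directly (which holds since $h_Q(v^t\!.\,x)>0$ on a set of positive measure in $L$).
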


By setting $Q=[-1,1]$, we obtain the classical result from Busemann and Petty \cite{Busemann53,petty61_1} when $p=1$ and the $L^p$ analogue proven by Lutwak, Yang and Zhang \cite{LYZ00} when $p>1.$ By setting $Q = [-\alpha_1, \alpha_2]$ we obtain the asymmetric $L^p$ case by Haberl and Schuster \cite{HS09}. 

This paper is organized as follows. Section~\ref{sec:geo_not} is dedicated to collecting necessary background materials from convex geometry, Section~\ref{sec:connect} discusses relations between classical $L^p$ Petty projection and Busemann-Petty centroid inequalities and the ones herein established, and Section~\ref{sec:properties} establishes properties of the operators $\P$ and $\G$. In Section~\ref{sec:geometricineqs}, we prove Theorems~\ref{t:PPIGeneral} and \ref{t:NewLpBPC}; Section~\ref{sec:Sant} treats inequalities in the extremal case $p=\infty$, thus establishing Theorem~\ref{t:operatornorms}, and we prove $m$th-order versions of the $L^p$ Santal\'o inequalities by Lutwak and Zhang \cite{LZ97}.  Section~\ref{sec:functionspaces} discusses Sobolev space theory and the convex bodies associated with a Sobolev function. Finally, Section~\ref{sec:functionalineqs} contains the proofs of the Sobolev inequality appearing in Theorem~\ref{t:GeneralAffineSobolev}.

\section{Geometric notions}
\label{sec:geo_not}
Of course, for considerations not involving matrix multiplication, there is no need to distinguish between $\M$ and $\R^{nm}$.
All the results on this section will be stated for $\R^d$ and apply to $\M$ as well. Classical notions we use in this work include $\mathcal O (n),$ the orthogonal group on $\s,$ and $GL_n(\R),$ the group of non-singular $n\times n$ matrices over $\R$.

The following facts about convex bodies can be found in the textbook by Schneider \cite{Sh1}. For $K\in\conbodio[d]$, the \textit{Minkowski functional} of $K$ is defined to be $\|y\|_K=\inf\{r>0:y\in rK\}$. For $K\in\conbodio[d],$ the polar of $K$ is given by $K^\circ = \{x\in\R^d:h_K(x)\leq 1\}$. Notice that $\|x\|_{K^\circ} =h_K(x)$. 

The surface area measure, $\sigma_K$, for a convex body $K \in \conbod[d]$ is a Borel measure on the sphere defined by  
\[\sigma_K(D)=\mathcal H ^{d-1}(n^{-1}_K(D)),\]
for every Borel subset $D$ of $\mathbb{S}^{d-1}$, where $n_K^{-1}(\cdot)$ is the inverse Gauss map of the body $K$. 
The \textit{mixed volume} of $K\in\conbod[d]$ and a compact, convex set $L\subset \R^d$ is given by 
\begin{equation}
	  V_d(K,L):=\frac 1d \lim_{\varepsilon \to 0}\frac{\vol[d](K+\varepsilon L)-\vol[d](K)}{\varepsilon}=\frac 1d \int_{\mathbb{S}^{d-1}}h_L(\theta)d\sigma_K(\theta).
	  \label{eq:mixed_0}
\end{equation}
\noindent We refer to the textbook by Schneider \cite{Sh1} and the references therein for the history on mixed volumes.

For $K,L\in\conbodo[d]$,$\;\alpha,\beta>0$ and $p\geq 1$, Firey defined their $L^p$ Minkowski summation $\alpha\cdot_p K +_p \beta\cdot_p L=\alpha\cdot K+_p \beta\cdot L$ as the convex body whose support function is given by
$$h^p_{\alpha\cdot K+_p \beta\cdot L}(x)=\alpha h_K^p(x)+\beta h_L^p(x),$$
where the first notation emphasizes the fact that $K$ and $L$ are dilated by $\alpha^{1/p}$ and $\beta^{1/p}$ respectively; we will use the second notation, which suppresses this fact. When $p=1$, this corresponds with the standard Minkowski summation of $\alpha K$ and $\beta L$. Recall that Lutwak introduced, and proved, the integral representation of, the so-called $L^p$ mixed volumes \cite{LE93,LE96}: for $p\geq 1$, $K\in\conbodio[d,1]$ and $L\in\conbodo[d,1]$, \begin{equation}
\label{eq:LPmixed}
V_{p,d}(K,L)\!=\!\frac pd \lim_{\varepsilon \to 0}\frac{\vol[d]\!\left(K\!+_p\varepsilon \cdot L\right)\!-\!\vol[d](K)}{\varepsilon}\!=\!\frac 1d\! \int_{\mathbb{S}^{d-1}}\!\!h_L(v)^pd\sigma_{K,p} (v),\end{equation}
where we recall that $d\sigma_{K,p}(v) = h_K(v)^{1-p} d\sigma_K(v)$ is the $L^p$ surface area measure of $K$.
\noindent Lutwak showed the $L^p$ \textit{Minkowski first inequality}: for $K,L\in\conbodio[d]$ and $p\geq 1:$
\begin{equation}
  V_{p,d}(K,L)^d\geq \vol[d](K)^{d-p}\vol[d](L)^p,
  \label{eq:Firey_Min_first}
\end{equation}
with equality if and only if $K$ is a dilate of $L$ when $p>1$, or, if $p=1$, then $K$ is homothetic to $L$.

 We say $L$ is a star body, and write $L\in\mathcal{S}^d$, if it is a compact set containing the origin in its interior such that, for every $x \in L$, $[o,x]\subset L$ and its radial function is continuous and positive on $\R^d\setminus\{o\}$, where the radial function of a compact set $L$ is given by $\rho_L(y)=\sup\{\lambda>0:\lambda y\in L\}.$ For $K\in\conbodio[d]$, we have $\|y\|_K=\rho_K(y)^{-1}$ when $y\neq o$, and we obtain the polar formula for volume:
\begin{equation}
    \label{eq:polar}
    \vol[d](K)=\frac{1}{d}\int_{\mathbb{S}^{d-1}}\rho_K(\theta)^dd\theta.
\end{equation} 
Given $p \in \R$, and $K,L \in \sta[d]$, the $p$th \textit{dual mixed volume} of $K$ and $L$ was introduced by Lutwak in \cite{Lut75} as follows:
\begin{equation}
  \widetilde V _{-p,d}(K,L)=\frac 1d \int_{\mathbb{S}^{d-1}}\rho_K (\theta)^{d+p}\rho_L(\theta)^p d\theta.
  \label{eq:dual_mixed}
\end{equation}
We will require the so-called \textit{dual $L^p$ Minkowski first inequality}: for $K,L \in \mathcal S ^d$, $p\geq 1$
\begin{equation}
  \vol[d](K)^{d+p}\vol[d](L)^{-p}\leq \widetilde V _{-p,d}(K,L)^d ,
  \label{dual_Min_first}
\end{equation}
with equality if and only if $K$ and $L$ are dilates.

 We refer the reader to \cite{gardner_book,Gr,AK05,KY08,Sh1} for more background in convex geometry. 

A key ingredient of the equality case of Theorem \ref{t:PPIGeneral} is to find a linear projection of $Q$ onto a one-dimensional section.
We believe the following result has interest by itself. We use the notation $\xi^\perp$ for the hyperplane through the origin orthogonal to the direction $\xi \in \mathbb{S}^{d-1}$.

\begin{proposition}
\label{p:one_dim_proj}
  Every convex body $Q$ containing the origin admits a linear projection $P$ onto a one-dimensional subspace, for which $P(Q) \subseteq Q$.
\end{proposition}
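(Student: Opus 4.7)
The plan is to reformulate the condition $P(Q) \subseteq Q$ in terms of boundary data of $Q$ and then finish by a topological argument. A linear projection onto a line can be written $P(x) = \langle x, \eta\rangle \xi$ for a direction $\xi \in \mathbb{S}^{d-1}$ and a normal $\eta$ to the kernel hyperplane, normalized by $\langle \xi, \eta\rangle = 1$. Then $P(Q) = [-h_Q(-\eta), h_Q(\eta)]\,\xi$, and since $0 \in Q$, convexity yields $P(Q) \subseteq Q$ iff both endpoints $\pm h_Q(\pm\eta)\xi$ lie in $Q$. For any such $\xi,\eta$, the inequality $h_Q(\eta) \ge \langle \rho_Q(\xi)\xi, \eta\rangle = \rho_Q(\xi)$ holds, so $h_Q(\eta)\xi \in Q$ forces $h_Q(\eta) = \rho_Q(\xi)$, i.e.\ $\eta$ is an outer normal to $Q$ at $\rho_Q(\xi)\xi$; symmetrically, $-\eta$ is an outer normal at $-\rho_Q(-\xi)\xi$. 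Thus the task is to find $\xi$ such that the supporting hyperplanes at the two endpoints of the chord $Q \cap \R\xi$ are parallel.

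Next I would reduce to the case where $Q$ is smooth, strictly convex, and contains $0$ in its interior, by approximating with $Q + \epsilon \B[d]$ and taking a subsequential limit of the constructed projections. In this smooth setting the outer unit normal $\nu(\xi)$ at $\rho_Q(\xi)\xi$ is a continuous map $\nu: \mathbb{S}^{d-1} \to \mathbb{S}^{d-1}$ satisfying $\langle \xi, \nu(\xi)\rangle > 0$ everywhere, and the task becomes: find $\xi$ with $\nu(\xi) + \nu(-\xi) = 0$. Writing $\nu(\xi) = (\xi + w(\xi))/\|\xi + w(\xi)\|$ for the unique tangential component $w(\xi) \in \xi^\perp$ (explicitly $w(\xi) = \nu(\xi)/\langle \xi, \nu(\xi)\rangle - \xi$), a short calculation gives
\[
\nu(\xi) + \nu(-\xi) = 0 \iff \omega(\xi) := w(\xi) + w(-\xi) = 0.
\]
Since $\omega(\xi) \in \xi^\perp = T_\xi\mathbb{S}^{d-1}$ and $\omega(-\xi) = \omega(\xi)$, the problem reduces to the statement that every continuous, antipodally symmetric section of the tangent bundle $T\mathbb{S}^{d-1}$ must vanish somewhere.

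This last statement is the topological heart of the argument. Such antipodally symmetric sections of $T\mathbb{S}^{d-1}$ correspond to sections of the rank-$(d-1)$ bundle $T\mathbb{RP}^{d-1} \otimes \gamma_1$ over $\mathbb{RP}^{d-1}$, where $\gamma_1$ is the tautological real line bundle. Using $T\mathbb{RP}^{d-1} \cong \gamma_1 \otimes \gamma_1^\perp$ and the fact that $\gamma_1 \otimes \gamma_1$ is trivial, this bundle simplifies to $\gamma_1^\perp$, whose total Stiefel-Whitney class (from $\gamma_1 \oplus \gamma_1^\perp \cong \underline{\R^d}$) is $w(\gamma_1^\perp) = (1+\alpha)^{-1} = 1 + \alpha + \dots + \alpha^{d-1}$ in $H^*(\mathbb{RP}^{d-1}; \mathbb{Z}_2) = \mathbb{Z}_2[\alpha]/(\alpha^d)$. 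In particular its top class $w_{d-1}(\gamma_1^\perp) = \alpha^{d-1}$ is the nonzero generator of $H^{d-1}(\mathbb{RP}^{d-1}; \mathbb{Z}_2)$. As this is the primary obstruction to a nowhere-vanishing section, $\omega$ must vanish at some $\xi_0 \in \mathbb{S}^{d-1}$, providing the projection sought.

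The principal obstacle is this last topological step. For $d$ odd one can bypass the twisted-bundle machinery by quoting Poincar\'e-Hopf directly on $\omega$ viewed as an ordinary tangent field (since $\chi(\mathbb{S}^{d-1}) \neq 0$ in that range), but for $d$ even the antipodal symmetry of $\omega$ is essential, and the Stiefel-Whitney computation above seems the cleanest way to exploit it.
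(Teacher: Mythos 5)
Your argument is correct, and it reaches the goal by a genuinely different (in a sense, dual) route from the paper's. The paper parametrizes chords of $Q$ by their common normal direction $\xi$ (the chord $[x_+(\xi),x_-(\xi)]$ joining the two boundary points with normals $\pm\xi$) and seeks a chord through the origin; you parametrize chords through the origin by their direction $\xi$ and seek one whose endpoint normals are antiparallel. Both reductions land on the same topological fact — a continuous \emph{even} tangent vector field on $\mathbb{S}^{d-1}$ must vanish — but you prove it differently: the paper rotates the identity into $f/|f|$ and contradicts the parity of the degree of an even map (an elementary degree argument), whereas you pass to $\mathbb{RP}^{d-1}$, identify even tangent fields with sections of $\gamma_1^\perp$, and invoke $w_{d-1}(\gamma_1^\perp)=\alpha^{d-1}\neq 0$. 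Your identification and Stiefel--Whitney computation are correct (and one can even avoid obstruction theory: a nowhere-zero section would split off a trivial summand, forcing $w_{d-1}=0$). What each approach buys: your radial parametrization only needs the forward Gauss map to be single-valued and continuous, so the approximation $Q+\epsilon \B[d]$ suffices, whereas the paper needs the \emph{inverse} Gauss map and hence approximates by bodies with positive Gauss curvature; on the other hand the paper's homotopy/degree step is more elementary than characteristic classes. Two small points to make explicit if you write this up: (i) in the limiting step you need a uniform bound on the operator norms of the projections $P_\epsilon$ before extracting a convergent subsequence — this follows, as in the paper, from sandwiching $B(x_0,r)\subseteq Q\subseteq B(o,R)$, which gives $\|P_\epsilon\|_{op}\leq (R+\epsilon)/r$; and (ii) the converse implication in your first reduction (antiparallel normals at the chord endpoints $\Rightarrow$ $P(Q)\subseteq Q$ for $\eta=\nu/\langle\xi,\nu\rangle$) should be stated, though it is immediate from the same support-function identity.
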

\begin{proof}
  First we assume $Q$ is smooth with positive Gauss curvature.
  For $\xi \in \mathbb S ^{d-1}$ consider the two points $x_+(\xi), x_-(\xi) \in \partial Q$ for which $n_Q(x_\pm(\xi)) = \pm \xi$.
  The segment $[x_+(\xi), x_-(\xi)] \subseteq Q$ intersects $\xi^\perp$ in a unique point that we call $f(\xi)$.

  The function $f:\mathbb S ^{d-1} \to \R^d$ is even, continuous and represents a vector field on $\mathbb S ^{d-1}$ since always $\xi \perp f(\xi)$.
  We claim that there must be a vector $\xi_0 \in \mathbb S ^{d-1}$ for which $f(\xi_0)=o$.
  Assuming the claim, this implies that the segment $[x_+(\xi), x_-(\xi)]$ passes through the origin, and the projection onto this segment, with kernel $\xi^\perp$, is a linear function taking $Q$ to $[x_+(\xi), x_-(\xi)] \subseteq Q$, as required.

  Now we prove the claim: assume by contradiction that $f$ has no zeros. 
  Let $F:\mathbb{S}^{d-1} \times [0,\frac{\pi} 2 ] \to \mathbb{S}^{d-1}$ be defined by
  \[F(\xi, \alpha) = \frac{\cos(\alpha) \xi + \sin(\alpha) f(\xi)}{|\cos(\alpha)\xi + \sin(\alpha) f(\xi)|}.\]
  Then $F$ is a homotopy between the identity $\mathbb{S}^{d - 1} \to \mathbb{S}^{d - 1}$ and $\tilde f = f/|f|$.
  But $\tilde f$ has even topological degree since it is an even function \cite[Exercise 2.2.14]{HatcherAT}, while the identity map has degree $1$, contradicting the homotopy invariance of the degree. Hence $f$ must have a zero, as claimed.

  To prove the lemma in the general case,
  take $r,R > 0$ such that 
  \begin{equation}
      \label{eq:one_dim_proj}
      B(x_0,r) \subseteq Q \subseteq B(o,R)
  \end{equation}
  for some $x_0 \in Q$, where $B(x,r)=x+r\B$.
  Take a sequence of convex bodies $Q_k$ converging to $Q$ in the Hausdorff metric, which are each smooth with positive Gauss curvature containing the origin, and such that \eqref{eq:one_dim_proj} holds for $Q_k$ instead of $Q$.
  By the first part of the proposition, we may find projections $P_k$ such that $P_k(Q_k)\subseteq Q_k$.
  Notice that we must have 
  \begin{equation}
      \label{eq:one_dim_proj_interior}
      2 B(o,r) \subseteq Q_k - Q_k,
  \end{equation} so that
  \[P_k(2 B(o,r)) \subseteq P_k(Q_k - Q_k) \subseteq Q_k - Q_k \subseteq B(o,2R),\]
 and $\|P\|_{op} \leq \frac Rr$, where $\|\cdot\|_{op}$ is the Euclidean operator norm.
  Since $\{P \in \M[n,n] : \|P\|_{op} \leq \frac Rr \}$ is compact, we may take a subsequence $P_{k_j}$ converging to $P$, which is also a projection, and $P(Q) \subseteq Q$.
\end{proof}

Given $\xi \in \mathbb{S}^{d-1}$ and $K \in \conbod[d]$, the \textit{Steiner symmetral of $K$ about $\xi^{\perp}$}, denoted by $S_\xi K$, is a convex body rearranged from $K$ so that $S_\xi K$ is symmetric with respect to $\xi^{\perp}$ and $\vol[d] (S_{\xi} K) = \vol[d](K)$. It is well known that $K$ can be sent to an origin symmetric Euclidean ball in the Hausdorff metric (see \cite{Sh1}) by successive Steiner symmetrals of $K$ (see \cite[Theorem 6.6.5]{Web94}); that is, one can find $\{\xi_j\}_{j=1}^{\infty} \subset \mathbb{S}^{d-1}$ such that, if we define $S_1 K = S_{\xi_1} K, S_j K=S_{\xi_j} S_{j-1}K,$ then $S_j K \to \vol[d](B_2^d)^{-1/d}\vol[d](K)^{1/d} B_2^d$. It turns out that the Steiner symmetrizaiton process is a powerful tool used to establish many isoperimetric type inequalities. 

\section{Preliminaries on the $m$th-Order $L^p$ Projection and Centroid Operators}

\label{sec:pre_op}

\subsection{Connection between Petty-type inequalities}
\label{sec:connect}
In this section, we briefly discuss how our $m$th-order $L^p$ projection bodies $\P K$, given by Definition~\ref{d:generalprojectionbody} for $p\geq 1$, $Q\in\conbodo[1,m]$ and $K\in\conbodio[n,1]$, and the associated $m$th-order $L^p$ Petty projection inequality, Theorem~\ref{t:PPIGeneral}, are related to other projection bodies and Petty-type inequalities. We will use the matrix notation for the new quantities, and the classical geometric notation for the previous results. Only in this subsection will both notations occur simultaneously. 

Minkowski's formula for projections states that for every $K\in\conbod[d], v\in\mathbb{S}^{d-1},$ one has
\[\vol[d-1](P_{v^\perp} K)=\frac d2 V_d(K,[-v,v]),\]
where $P_{H} K$ denotes the orthogonal projection of $K$ onto a linear subspace $H\subset \R^d$ and in this instance $H=v^{\perp}$. The projection body is defined by the support function
\begin{equation}
\label{eq:proj_sup}
h_{\Pi K}(v)=\vol[d-1](P_{v^\perp} K)=\frac 12 \int_{\mathbb{S}^{d-1}} |\langle u, v\rangle| d\sigma_K(u),\end{equation}
where the second equality comes from \eqref{eq:mixed_0}. The polar projection body of $K$ is denoted by $\Pi^\circ K=(\Pi K)^\circ$ and is the unit ball in $\R^d$ given by the norm $
\|v\|_{\Pi^\circ K} = \vol[d-1](P_{v^\perp} K),$ $v \in \mathbb{S}^{d-1}$. Under these notations, Petty's projection inequality \eqref{eq:PettyClassical} \cite{CMP71} can be written as, for $K\in\conbod[d]$,
\begin{equation}
    \vol[d](K)^{d-1} \vol[d](\Pi^{\circ}K) \leq \left(\frac{\omega_d}{\omega_{d-1}}\right)^d,
    \label{e:petty_ineq}
\end{equation}
with equality if and only if $K$ is an ellipsoid. Therefore, one immediately obtains that Theorem~\ref{t:PPIGeneral} includes the classical  Petty projection inequality \eqref{e:petty_ineq} by setting $p=1$ and $Q=[-\frac{1}{2},\frac{1}{2}]$. 

Recalling that $d\sigma_{K,p}(u)=h^{1-p}_K(u)d\sigma_K(u)$, Lutwak, Yang and Zhang \cite{LYZ00} used $\sigma_{K,p}$ in place of $\sigma_K$ to define the $L^p$ projection body of $K\in\conbodio[d]$, $\Pi_p K,$ via the support function, for $v\in\mathbb{S}^{d-1}$ and $p\geq 1$,
\begin{equation}
  h_{\Pi_p K}(v)^p = \gamma_{d,p} \int_{\mathbb{S}^{d-1}}|\langle u, v \rangle|^p d\sigma_{K,p}(u),
  \label{eq:L^p_proj}
\end{equation}
where $\gamma_{d,p}$ is such that $\Pi_p B^d_2 = B^d_2$. The $L^p$ polar projection body is then naturally $\Pi_p^\circ K:=(\Pi_p K)^\circ.$ The associated Petty-type inequality, proved by Lutwak, Yang and Zhang \cite{LYZ00}, is precisely that the functional
\begin{equation}
\label{eq:LpPetty}
K\mapsto \vol[d](\Pi_p^\circ K ) \vol[d](K)^{\frac{d} p -1}
\end{equation}
over $\conbodio[d]$ is maximized by origin symmetric ellipsoids. Thus, the case $p>1$ and $Q=[-\gamma_{d,p}^{1/p},\gamma_{d,p}^{1/p}]$ of Theorem~\ref{t:PPIGeneral} yields the $L^p$ Petty projection inequality. Ludwig introduced \cite{ML05} the \textit{asymmetric $L^p$ projection bodies} $\Pi^{\pm}_p K$: for $p\geq 1$, $K\in\conbodio[d]$, and $v\in\mathbb{S}^{d-1}$, 
\begin{equation}
\label{eq:asymbod}
	h_{\Pi^{\pm}_p K}(v)^p=\int_{\mathbb{S}^{d-1}} \langle u,v\rangle_{\pm}^p d\sigma_{K,p}(u),
\end{equation}
where we recall that given a function $f:\Rn\to\R$, there exist two non-negative functions $f_\pm$ such that $f=f_+-f_-$. Denoting by $[o,v]$ the line segment connecting the origin to $v\in\R^d\setminus\{o\}$, one has that $h_{[o,\pm v]}(u)=\langle u,v \rangle_\pm.$ For $K\in\conbodio[d]$, $\Pi^{\pm}_p K$ contains the origin in its interior. Indeed, let $R_K$ be the outer-radius of $K$, so that $h_K \leq R_K$. Then, for every $v\in\mathbb{S}^{d-1}$,
\begin{align*}
h_{\Pi^{\pm}_p K}(v)&=\left(\int_{\mathbb{S}^{d-1}}h_{[o,\pm v]}(u)^p h_K(u)^{1-p}d\sigma_K(u)\right)^{\frac{1}{p}}
\\
& \geq R_K^{\frac{1-p}{p}} \vol[d-1](\partial K)^{\frac{1}{p}} \left(\int_{\mathbb{S}^{d-1}}h_{[o,\pm v]}(u)^p \frac{d\sigma_K(u)}{ \vol[d-1](\partial K)}\right)^{\frac{1}{p}}
\\
& \geq (R_K \vol[d-1](\partial K))^{\frac{1-p}{p}} \int_{\mathbb{S}^{d-1}}h_{[o,\pm v]}(u) d\sigma_K(u)
\end{align*}
from Jensen's inequality. But, from the fact that the center of mass of $\sigma_K$ is at the origin,
$$\int_{\mathbb{S}^{d-1}}h_{[o,\pm v]}(u) d\sigma_K(u) = dV_d(K,[o,\pm v]) = \frac{d}{2}V_d(K,[-v,v]).$$
Since $h_{\Pi K}(v) >0$ for all $v\in\s$ as $K$ is full-dimensional and mixed volumes are translation invariant and monotone, we obtain $$h_{\Pi^{\pm}_p K}(v) \geq (R_K \vol[d-1](\partial K))^{\frac{1-p}{p}} h_{\Pi K}(v) >0.$$ Using this, we establish at the end of this subsection  that $\P K$ contains the origin in its interior as well.

Consider constants $\alpha_1,\alpha_2\geq 0$ such that not both are zero, and define the following convex body
\begin{equation}\Pi_{\alpha_1,\alpha_2,p} K = \alpha_1\Pi^+_pK + \alpha_2\Pi^-_pK.
\label{eq:HS_body}
\end{equation}
Using the notation $\Pi_{\alpha_1,\alpha_2,p}^\circ K = (\Pi_{\alpha_1,\alpha_2,p} K)^\circ,$
Haberl and Schuster \cite{HS09} proved the following.
\begin{proposition}[Theorem 1 in \cite{HS09}]
\label{p:HS}
	Let $K\in\conbodio[d]$ and $p>1.$ Then, for $\alpha_1,\alpha_2\geq 0$ such that not both are zero, one has
$$\vol[d](K)^{d/p-1}\vol[d](\Pi_{\alpha_1,\alpha_2,p}^\circ K) \leq \vol[d](B_2^d)^{d/p-1}\vol[d](\Pi_{\alpha_1,\alpha_2,p}^\circ B_2^d),$$
with equality if and only if $K$ is an origin symmetric ellipsoid.
\end{proposition}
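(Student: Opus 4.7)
The strategy is the classical two-step argument for $L^p$ affine isoperimetric inequalities: establish $GL_d(\R)$-invariance of the functional, then prove monotonicity under Steiner symmetrization and pass to the Euclidean ball in the limit.

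Set $F(K) := \vol[d](K)^{d/p-1}\vol[d](\Pi_{\alpha_1,\alpha_2,p}^\circ K)$. For the invariance step, combining $h_{AK}(v) = h_K(A^t v)$ with the change-of-variables formula for the $L^p$ surface area measure gives $\Pi^{\pm}_p(AK) = |\det A|^{1/p} A^{-t} \Pi^{\pm}_p K$ for $A \in GL_d(\R)$; the quickest way is to reduce to $A \in SL_d(\R)$, handle that case directly, and then combine with the scaling $\Pi^{\pm}_p(\lambda K) = \lambda^{(d-p)/p}\Pi^{\pm}_p K$. This covariance is preserved by the Firey combination in \eqref{eq:HS_body}, so $\Pi_{\alpha_1,\alpha_2,p}^\circ(AK) = |\det A|^{-1/p} A^t \Pi_{\alpha_1,\alpha_2,p}^\circ K$. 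The resulting factor $|\det A|^{1-d/p}$ in $\vol[d](\Pi_{\alpha_1,\alpha_2,p}^\circ AK)$ cancels with $|\det A|^{d/p-1}$ from $\vol[d](AK)^{d/p-1}$, so $F(AK)=F(K)$.

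The heart of the argument is the Steiner symmetrization monotonicity $F(S_\xi K) \geq F(K)$ for every $\xi \in \mathbb{S}^{d-1}$. Let $K^{\star} := R_\xi K$ be the reflection of $K$ across $\xi^\perp$. Directly from the definitions, $\Pi^{\pm}_p K^{\star} = R_\xi \Pi^{\pm}_p K$, and thus $\Pi_{\alpha_1,\alpha_2,p}^\circ K^{\star} = R_\xi \Pi_{\alpha_1,\alpha_2,p}^\circ K$; in particular the two polar projection bodies have equal volume. I would then establish the key inclusion
\begin{equation*}
\tfrac{1}{2}\bigl(\Pi_{\alpha_1,\alpha_2,p}^\circ K \;\widetilde{+}_{-p}\; \Pi_{\alpha_1,\alpha_2,p}^\circ K^{\star}\bigr) \;\subseteq\; \Pi_{\alpha_1,\alpha_2,p}^\circ S_\xi K,
\end{equation*}
where $\widetilde{+}_{-p}$ denotes the radial $L^{-p}$ (harmonic) sum of star bodies. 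The inclusion is proven via a pointwise support-function inequality comparing boundary chords of $K$ and $K^{\star}$ above and below $\xi^\perp$ with the corresponding chord of $S_\xi K$, through a weighted $p$-Minkowski inequality adapted to the asymmetric weights $\alpha_1,\alpha_2$. Applying a dual $L^p$ Brunn-Minkowski inequality for $\widetilde{+}_{-p}$, together with the equality of volumes of $\Pi_{\alpha_1,\alpha_2,p}^\circ K$ and $\Pi_{\alpha_1,\alpha_2,p}^\circ K^{\star}$, then yields $\vol[d](\Pi_{\alpha_1,\alpha_2,p}^\circ S_\xi K) \geq \vol[d](\Pi_{\alpha_1,\alpha_2,p}^\circ K)$, and since Steiner symmetrization preserves volume, $F(S_\xi K) \geq F(K)$.

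Finally, iterating Steiner symmetrizations along a sequence $\{\xi_j\}_{j=1}^{\infty}\subset\mathbb{S}^{d-1}$ whose compositions send $K$ to the Euclidean ball of the same volume, combined with continuity of $K \mapsto \Pi_{\alpha_1,\alpha_2,p} K$ in the Hausdorff metric, produces $F(K) \leq F(B_2^d)$, which is the desired inequality. For the equality case when $p>1$, equality in the chain forces equality in the key inclusion for every $\xi$ (imposing symmetry of $K$ across every hyperplane through the origin after a suitable affine adjustment) and in the $L^p$ Minkowski first inequality \eqref{eq:Firey_Min_first} (which for $p>1$ forces dilation relations), which together force $K$ to be an origin symmetric ellipsoid. \emph{The main obstacle} is the key set inclusion: in the symmetric case $\alpha_1=\alpha_2$ it reduces to the Lutwak-Yang-Zhang $L^p$ Steiner symmetrization inequality, but the asymmetric case requires a delicate pointwise $p$-Minkowski estimate that tracks how the weights $\alpha_1$ and $\alpha_2$ redistribute between a chord of $K$ and its reflection in $\xi^\perp$ as one passes to $S_\xi K$.
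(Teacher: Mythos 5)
Your overall strategy — establish $GL_d$-invariance of the functional, prove Steiner monotonicity, iterate to the ball — is indeed the classical framework (and is what Haberl--Schuster do; the paper cites Proposition~\ref{p:HS} rather than proving it). The invariance step and the reflection identity $\Pi^{\pm}_p K^\star = R_\xi \Pi^{\pm}_p K$ are correct.

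However, there is a genuine gap in the central Steiner monotonicity step. You propose the inclusion
$\tfrac{1}{2}\bigl(\Pi_{\alpha_1,\alpha_2,p}^\circ K \;\widetilde{+}_{-p}\; \Pi_{\alpha_1,\alpha_2,p}^\circ K^{\star}\bigr) \subseteq \Pi_{\alpha_1,\alpha_2,p}^\circ S_\xi K$
and then invoke a dual $L^p$ Brunn--Minkowski inequality to conclude $\vol(\Pi^\circ_{\alpha_1,\alpha_2,p} S_\xi K) \geq \vol(\Pi^\circ_{\alpha_1,\alpha_2,p} K)$. This cannot work as stated, because the dual $L^p$ Brunn--Minkowski inequality for the harmonic radial sum goes the ``wrong'' way: for star bodies $A,B$ one has
$\vol(A \widetilde{+}_{-p} B)^{-p/d} \geq \vol(A)^{-p/d} + \vol(B)^{-p/d}$,
which is an \emph{upper} bound on $\vol(A \widetilde{+}_{-p} B)$. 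With equal volumes $\vol(\Pi^\circ K) = \vol(\Pi^\circ K^\star)$ this yields $\vol\bigl(\tfrac12(\Pi^\circ K \widetilde{+}_{-p} \Pi^\circ K^\star)\bigr) \leq \vol(\Pi^\circ K)$, so your chain gives $\vol(\Pi^\circ S_\xi K) \geq (\text{something} \leq \vol(\Pi^\circ K))$ — which is not the desired monotonicity. The convexity of $t \mapsto t^{-d/p}$ pushes the integral in the opposite direction from what you need.

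The correct key inclusion, due to Lutwak--Yang--Zhang and adapted by Haberl--Schuster, is the cleaner
$S_\xi \Pi^\circ_{\alpha_1,\alpha_2,p} K \subseteq \Pi^\circ_{\alpha_1,\alpha_2,p} S_\xi K$.
Since Steiner symmetrization preserves volume, this immediately gives $\vol(\Pi^\circ_{\alpha_1,\alpha_2,p} S_\xi K) \geq \vol(\Pi^\circ_{\alpha_1,\alpha_2,p} K)$ with no Brunn--Minkowski input at all. (No reflection of $K$ is needed; the asymmetry of the weights $\alpha_1,\alpha_2$ is handled inside the pointwise argument, using the sub-linearity of the support function of $[-\alpha_1,\alpha_2]$ together with the scalar inequality $(\alpha+\beta)^p(\gamma+\lambda)^{1-p} \leq \alpha^p\gamma^{1-p} + \beta^p\lambda^{1-p}$, exactly as in the proof of Lemma~\ref{l:multidimSteiner}.) Indeed, Lemma~\ref{l:multidimSteiner} of this paper specialized to $m=1$ and $Q = [-\alpha_1,\alpha_2]$ recovers precisely this inclusion, so the mechanism for the Steiner step is contained in the paper even though Proposition~\ref{p:HS} itself is cited as a black box.
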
 
\noindent  By setting $Q=[-\alpha_1,\alpha_2]$ with $\alpha_1,\alpha_2 > 0$ when $p>1$ in Theorem~\ref{t:PPIGeneral}, one obtains Proposition~\ref{p:HS}.

Certain special cases of Theorem~\ref{t:PPIGeneral} are implied by the $L^p$ Petty projection inequality via the product structure. Indeed, if $Q_1, Q_2 \in \conbodo[1,m]$ and $K\in\conbodio[n,1]$, one immediately obtains from Definition \ref{d:generalprojectionbody} that
\begin{equation}
    \label{eq_Qsum}
   \P[Q_1+_p Q_2]K = \P[Q_1]K +_p \P[Q_2]K.
\end{equation}
From \eqref{eq_Qsum} and the dual $L^p$ Brunn-Minkowski inequality (cf. \cite{WF74}), we may deduce Theorem \ref{t:PPIGeneral} for $Q = Q_1+_p Q_2$, from the corresponding result for $Q = Q_1$ and $Q = Q_2$. This explains, by taking $Q_2 = -Q_1$, the phenomenon repeatedly observed in \cite{HS2009, HS09, ML05} that the asymmetric inequalities are stronger and directly imply the symmetric ones.

Also, if $Q = Q_1 \times \{0\}^{m_2}$  with $Q_1 \in \conbod[m_1], m_1 + m_2 = m$, 
\begin{equation}
    \label{eq_Qproduct}
\P K = \P[Q_1] K \times \{0\}^{m_2}.
\end{equation}
Combining \eqref{eq_Qsum} and \eqref{eq_Qproduct} we get 
\[\P[(Q_1 \times \{0\}^{m_2}) +_p (\{0\}^{m_1} \times Q_2)] K = (\P[Q_1]K \times \{0\}^{m_2}) +_p (\{0\}^{m_1} \times \P[Q_2]K)\]
where $Q_i \in \conbodo[m_i], m_1+m_2 = m$, which together with the fact that there exists a constant $c$, depending only on $m_1,m_2$ and $p$, such that
\[\vol[m](((G_1 \times \{0\}^{m_2}) +_p (\{0\}^{m_1} \times G_2))^\circ) = c \vol[m_1](G_1^\circ) \vol[m_2](G_2^\circ)\]
for every $G_i \in \conbodio[m_i]$, shows that Theorem \ref{t:PPIGeneral} for $Q = (Q_1 \times \{0\}^{m_2}) +_p (\{0\}^{m_1} \times Q_2)$ can be deduced directly from the lower dimensional cases $Q=Q_1$ and $Q=Q_2$. For $Q = B_q$, the $\ell_q$ ball with $\frac 1q+\frac 1p=1$, we can repeat this argument $m-1$ times ($B_q$ is a $p$-sum of $m$ orthogonal segments) and reduce Theorem \ref{t:PPIGeneral} to the classical $L^p$ Petty projection inequality \eqref{eq:LpPetty}.

We introduce the notation $o_{k,l}$ for the origin in $\M[k,l]$. If $k$ or $l$ are $1$, we may write $o_l$ or $o_k$ respectively. If the context is clear, we may suppress the subscripts and continue writing just $o$. Let $\{e_{1,i}\}_{i=1}^m$ be the canonical basis in $\M[1,m]$. The orthogonal simplex in $\conbodo[1,m]$ is then given by $$\Delta_m=\text{conv}\left\{o_{1,m},e_{1,1},\dots,e_{1,m}\right\},$$
where conv denotes the closed convex hull operation. If $x\in\M$ is given by $x=[x_1 \, \dots \, x_m],$ where each $x_i$ is a column vector with $n$-entries,
then one obtains from \eqref{eq:bival} that, for $K\in\conbod[n,1]$,
$$h_{\Pi_{(-\Delta_m),1} K}(x)=nV_n(K, \text{conv}_{1\leq i \leq m}[o,-x_i]),$$
where, for brevity, $\cong$ denotes the equating of matrix notation and the standard notation under the Euclidean structure and
$$x.(-\Delta_m)^t=\text{conv}\left\{o_{n,1}^t,-x_1,\dots,-x_m\right\}\cong \text{conv}_{1\leq i \leq m}[o,-x_i].$$ This is precisely the projection body considered in the prequel work \cite[Theorem 1.5]{HLPRY23}, and, when $m=1$ is again the classical projection body from the fact that the center of mass of $\sigma_K$ is at the origin. Thus, Theorem~\ref{t:PPIGeneral} includes the Petty-type inequality from \cite{HLPRY23} by setting $Q$ to be $-\Delta_m$ (notice that $Q$ is not required to have the origin as an interior point) and $p=1$.

We discussed earlier that the complex projection body $\Pi_{Q}^{\mathbb C} K$ can be identified with a $2n$ dimensional section of $\Pi_{Q,1} K$ when $K$ is able to be viewed as a complex, convex body in $\mathbb{C}^n \cong \R^{2n}$. However, the complex Petty projection inequality \cite[Theorem~1.1]{CH19} shown for $\Pi_Q^{\circ,\mathbb C} K=(\Pi_Q^{\mathbb C} K)^\circ$ by Haberl relies on two important facts valid in the case $p=1, m=2$, and even real dimension. The first one is that every symmetric planar convex body $Q$ is a zonoid, and the second one is the existence of a linear embedding $T:\R^n \to \M[n,m]$ for which $T^t.T \equiv I_m$. These facts allow for the volume of $\Pi_{Q}^{\circ, \mathbb C} K$ to be controlled by $\vol(\Pi^\circ K)$, thus reducing the problem to the classical Petty projection inequality.
Here we take a substantially different approach, and, in fact, the complex Petty projection inequality is completely distinct from Theorem~\ref{t:PPIGeneral}, since that inequality controls only a $2n$ dimensional section; in particular, these sections are not necessarily invariant under the $m$th-order Steiner symmetrization used in Section~\ref{sec:geometricineqs} to prove Theorem~\ref{t:PPIGeneral}.

Finally, for every analogue between $\P K$ and some classical projection operator, there is an analogue between $\G K$ and some classical centroid operator. For example, in the prequel to this work, the authors introduced the centroid operator $\Gamma^m $, which assigns to a compact set $D \subset \R^{nm}$ of positive volume a convex body $\Gamma^m D$ in $\R^n$. In the current framework, the operators are related via $\Gamma^m= \Gamma_{-\Delta_m,1}$. We then obtain that the Busemann-Petty centroid inequality from \cite[Theorem 1.8]{HLPRY23} is a special case of Theorem~\ref{t:NewLpBPC}. As another example, the $L^p$ centroid body of $L\in\mathcal{S}^d$, introduced by Lutwak and Zhang \cite{LZ97}, is given by, for $p\geq 1$,
$$h_{\Gamma_p L}(v)^p=\frac{\tilde{\gamma}_{d,p}}{\vol[d](L)}\int_L |\langle v,x \rangle|^p dx,$$
where $\tilde{\gamma}_{d,p}$ is so that $\Gamma_p E = E$ for every origin symmetric ellipsoid $E\in\conbodio[d]$. That is, $\Gamma_p L = \Gamma_{[-\tilde{\gamma}_{d,p}^{1/p},\tilde{\gamma}_{d,p}^{1.p}],p} L$, and the $L^p$ Busemann-Petty centroid inequality by Lutwak, Yang and Zhang \cite{LYZ00} is a particular case of Theorem~\ref{t:NewLpBPC}. 

  We conclude this subsection by showing that $\P K$ and $\G L$ contain the origin (of the appropriate dimension) in their interiors.

\begin{proposition}\label{p:convexbodyorigin}
    For $p\geq 1$, $Q \in \conbodo[1,m]$, $K \in \conbodio[n,1]$ and $L\subset \M$ such that $L$ is a compact set with positive volume containing the origin in its interior, one has $\P K \in \conbodio$ and $\G L\in\conbodio[n,1]$.
\end{proposition}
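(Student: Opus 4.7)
The plan is to verify the two requirements behind $\P K\in\conbodio$ and $\G L\in\conbodio[n,1]$: in each case the defining integral yields the support function of a compact convex set containing the origin, and this support function is strictly positive on nonzero inputs. The first requirement is routine in both cases. Positive $1$-homogeneity is inherited from $h_Q$ through the $p$-th power and root; subadditivity combines subadditivity of $h_Q$ with Minkowski's inequality in $L^p(\sigma_{K,p})$ (respectively $L^p(L,\tfrac{dx}{\Vol(L)})$); and finiteness uses compactness of $Q$ and $\s$ (respectively of $L$) together with the fact that, for $K\in\conbodio[n,1]$, $h_K$ is continuous and strictly positive on $\s$, so $h_K^{1-p}$ is pinched between two positive constants. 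Nonnegativity, hence containment of the origin, follows from $h_Q\geq 0$, which holds because $0\in Q$.

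The delicate point is strict positivity. Introduce the closed convex cone $N:=\{z\in\M[1,m]:h_Q(z)=0\}=\{z:\langle y,z\rangle\leq 0 \text{ for every } y\in Q\}$; because $Q$ is $m$-dimensional, $N$ is a proper subcone of $\M[1,m]$. For $\P K$, I would argue by contradiction: assume $h_{\P K}(x_0)=0$ for some $x_0\neq 0$, so $h_Q(v^t.x_0)=0$ for $\sigma_{K,p}$-a.e., and hence $\sigma_K$-a.e., $v\in\s$ (by mutual absolute continuity of the two measures on $\s$). The closed set $\mathcal{V}_0=\{v\in\s:v^t.x_0\in N\}$ therefore contains $\operatorname{supp}(\sigma_K)$. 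Using the identity $\langle y,v^t.x_0\rangle=\langle v,x_0.y^t\rangle$, one rewrites $\mathcal{V}_0=\bigcap_{y\in Q}\{v\in\s:\langle v,x_0.y^t\rangle\leq 0\}$. Since $Q$ spans $\M[1,m]$ (being $m$-dimensional) and $x_0\neq 0$, there must exist $y_0\in Q$ with $x_0.y_0^t\neq 0$, else $x_0.y^t=0$ for every $y$ in a spanning set, forcing $x_0=0$. Hence $\operatorname{supp}(\sigma_K)$ is contained in the closed hemisphere $\{v\in\s:\langle v,x_0.y_0^t\rangle\leq 0\}$, contradicting the standard fact that surface area measures of full-dimensional convex bodies are not concentrated on any closed hemisphere.

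The argument for $\G L$ is softer because $L$ is assumed to contain a Euclidean ball of some radius $\varepsilon>0$ about the origin. Given $v_0\neq 0$, the linear map $x\mapsto v_0^t.x$ from $\M$ to $\M[1,m]$ is surjective, so its image meets the complement of the proper cone $N$. Choosing $x_1$ with $v_0^t.x_1\notin N$ and using positive homogeneity of $N$, one can scale $x_1$ down so that it lies in the ball contained in $L$; continuity of $x\mapsto v_0^t.x$ then yields a whole nonempty open neighborhood of such points on which $v_0^t.x\notin N$, i.e.\ $h_Q(v_0^t.x)^p>0$. Integrating against Lebesgue measure over this open set of positive measure forces $h_{\G L}(v_0)>0$.

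I expect the strict positivity for $\P K$ to be the main obstacle. The subtlety is that $Q$ need only contain the origin (not in its interior), so $h_Q$ genuinely vanishes on the nontrivial cone $N$, and a single $1$-dimensional lower bound via Proposition~\ref{p:one_dim_proj} is inadequate: one can arrange nonzero $x_0$ whose image under $y\mapsto x_0.y^t$ kills that particular segment. The full $m$-dimensionality of $Q$ is what unlocks the hemisphere contradiction above, and this is the step I would spend the most care on.
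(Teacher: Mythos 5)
Your proposal is correct and follows essentially the same route as the paper: both arguments hinge on the full-dimensionality of $Q$ to produce, for each nonzero $x$, some $q\in Q$ with $x.q^t\neq o$, and on the fact that $\sigma_K$ (equivalently $\sigma_{K,p}$) is not concentrated on any closed hemisphere --- which the paper packages as the already-established positivity of $h_{\Pi_p^{+}K}$ after bounding $h_{x.Q^t}\geq h_{[o,\,x.q^t]}$, and which you invoke in contrapositive form via the hemisphere contradiction. Your treatment of $\G L$ via the ball $\varepsilon B_2^{nm}\subseteq L$ is likewise the paper's argument.
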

\begin{proof}
The fact that $h_{\P K}(x)$ in Definition~\ref{d:generalprojectionbody} is convex and bounded for $x \in \S$ is immediate. Similarly, $h_{\G L}(u)$ defined in Definition~\ref{d:generalcentroidbody} is also manifestly convex and bounded for $v\in\s$.

    We first show that $h_{\P K}(x) > 0$ for all $x \in \S$. 
    Since $Q$ has non-empty interior we may find $q \in Q$ for which $x.q^t \neq 0$. Observe that
    \begin{align*}
        h_{\P K}(x)^p &= \int_{\s} h_Q(u^t.x)^p d \sigma_{K,p}(u) \\
        &= \int_{\s} h_{x.Q^t}(u)^p d \sigma_{K,p}(u) \\
        &\geq \int_{\s} h_{[o, x.q^t]}(u)^p d \sigma_{K,p}(u) \\
        &= h_{\Pi_p^+ K}(x.q^t) > 0,
    \end{align*}
    where the last line follows from \eqref{eq:asymbod} and the fact that $\Pi_p^+ K$ contains the origin in its interior. This establishes that $\P K$ has the origin in its interior as well.
    
    We next show that $h_{\G L}(v) >0$ for every $v\in\s$. Take any nonzero $q\in Q$. Since $L$ contains $o_{n,m}$ in its interior, there exists $\varepsilon > 0$ with $\varepsilon B_2^{n m} \subseteq L$. Thus,
    \begin{align}
        \Vol(L) h_{\G L}(v)^p
        &= \int_L h_Q(v^t.x)^p d x \geq \int_{\varepsilon B_2^{n m}} h_{v.[0,q]}(x)^p d x \\
        &= \int_{\varepsilon B_2^{n m}} \langle v.q, x\rangle_+^p d x = \beta_{n,m,p} |v.q| = \beta_{n,m,p} |v| |q| >0,
    \end{align}
    where $\beta_{n,m,p}=\int_{\varepsilon B_2^{n m}} \langle e, x\rangle_+^p d x,$ $e=\frac{v.q}{|v.q|} \in \S$. We note that $\beta_{n,m,p}$ is strictly greater than zero
. The claim follows.
\end{proof}

\subsection{Properties of the $m$th-order $L^p$ projection and centroid bodies} \label{sec:properties}

We start this section by showing some properties concerning $\G$. Our first step is to determine the behavior of the $m$th-order $L^p$ centroid body under matrix multiplication.

\begin{proposition}
    \label{p:linearbehaviorCentr}
	Let $T\in GL_n(\R)$. For $m \in \mathbb N $ and a compact set $L\subset \M$ with positive volume, one has
  \begin{equation}
    \G T .L = T.\G L . 
  \end{equation}
\end{proposition}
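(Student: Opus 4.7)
The plan is to verify the identity by comparing support functions on both sides, evaluated at an arbitrary $v \in \M[n,1]$. By the transformation rule \eqref{eq:sup_mat} applied to the body $\Gamma_{Q,p} L \in \conbodio[n,1]$, we have $h_{T.\Gamma_{Q,p}L}(v) = h_{\Gamma_{Q,p}L}(T^t.v)$, so the goal reduces to showing that
\[
h_{\Gamma_{Q,p}(T.L)}(v)^p = h_{\Gamma_{Q,p}L}(T^t.v)^p.
\]

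First I would unfold the left-hand side using Definition~\ref{d:generalcentroidbody}, writing it as $\Vol(T.L)^{-1}\int_{T.L} h_Q(v^t.x)^p\,dx$. Then I would perform the change of variables $x = T.y$; since $T \in GL_n(\R)$ acts on $\M[n,m]$ by left multiplication, the Jacobian factor is $|\det(T)|^m$, matching exactly the volume scaling $\Vol(T.L) = |\det(T)|^m \Vol(L)$ recalled in the introduction. The two factors of $|\det(T)|^m$ cancel, leaving
\[
h_{\Gamma_{Q,p}(T.L)}(v)^p = \frac{1}{\Vol(L)} \int_L h_Q(v^t.T.y)^p\,dy.
\]
Since $v^t.T = (T^t.v)^t$, the integrand is $h_Q((T^t.v)^t.y)^p$, and the right-hand side is exactly $h_{\Gamma_{Q,p}L}(T^t.v)^p$ by Definition~\ref{d:generalcentroidbody} applied with direction $T^t.v$.

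There is no genuine obstacle here; this is essentially a bookkeeping check that the $|\det T|^m$ volume scaling on $\M[n,m]$ is compatible with the linear action on the support function. The only subtlety worth mentioning explicitly is why $\Gamma_{Q,p}(T.L)$ is a well-defined convex body (so that passing to support functions is legitimate): this is guaranteed by Proposition~\ref{p:convexbodyorigin}, since $T.L$ remains a compact set of positive volume in $\M[n,m]$, and if $L$ contains the origin in its interior then so does $T.L$.
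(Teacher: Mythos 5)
Your proof is correct and follows essentially the same route as the paper's: unfold the support function from Definition~\ref{d:generalcentroidbody}, change variables $x = T.y$ so the Jacobian $|\det T|^m$ cancels $\Vol(T.L)/\Vol(L)$, rewrite $v^t.T = (T^t.v)^t$, and apply \eqref{eq:sup_mat}. The only difference is that you spell out the Jacobian cancellation and the well-definedness of $\Gamma_{Q,p}(T.L)$, which the paper leaves implicit.
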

\begin{proof}
	The result follows from the definition. Indeed, for $v \in \M[n,1]$,
  \begin{align*}
  h_{\G T. L }(v)^p&=\frac 1 {\Vol(T.L)} \int_{T.L} h_Q(v^t .  x )^p d x \\
  &=\frac 1 {\Vol(L)} \int_L h_Q(v^t . T .  x )^p d x \\
	  &=\frac 1 {\Vol(L)} \int_L h_Q((T^t . v)^t .  x )^p d x \\
  &=h_{\G L }(T^t.v)^p=h_{T.\G L }(v)^p.
  \end{align*}
\end{proof}
We next establish the following duality property between $\G $ and $\PP .$

\begin{lemma}
	\label{l:dualityofmixedvolume} 
	Fix $p\geq 1$ and $K \in \conbodio[n,1]$. Then, for every $Q\in\conbodo[1,m]$ and $L \in \sta$, we have 
	\[
		\widetilde V _{-p,nm}(L,\PP K ) = \frac{(nm+p) \Vol(L)} m V_{p,n}(K, \G L ).
    \]
\end{lemma}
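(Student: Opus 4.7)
The plan is to unfold both sides through their integral representations, with polar coordinates on $L$ serving as the bridge between $\G L$ and $\PP K$.

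First, I would rewrite $h_{\G L}(v)^p$ using polar coordinates on $\M \cong \R^{nm}$. Since the integrand $x \mapsto h_Q(v^t.x)^p$ is $p$-homogeneous in $x$, substituting $x = r \theta$ with $\theta \in \S$ and $r \in [0,\rho_L(\theta)]$ yields
\[
h_{\G L}(v)^p = \frac{1}{\Vol(L)} \int_{\S} h_Q(v^t.\theta)^p \int_0^{\rho_L(\theta)} r^{nm+p-1} \, dr \, d\theta = \frac{1}{\Vol(L)(nm+p)} \int_{\S} h_Q(v^t.\theta)^p \rho_L(\theta)^{nm+p} \, d\theta.
\]

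Next, I would insert this into the integral formula \eqref{eq:LPmixed} for $V_{p,n}(K, \G L)$ and swap the order of integration (Fubini-Tonelli applies by non-negativity). The inner integral over $\s$ collapses to $h_{\P K}(\theta)^p$ by Definition~\ref{d:generalprojectionbody}:
\[
V_{p,n}(K, \G L) = \frac{1}{n} \int_{\s} h_{\G L}(v)^p \, d\sigma_{K,p}(v) = \frac{1}{n\Vol(L)(nm+p)} \int_{\S} \rho_L(\theta)^{nm+p} \int_{\s} h_Q(v^t.\theta)^p \, d\sigma_{K,p}(v) \, d\theta,
\]
so that
\[
V_{p,n}(K, \G L) = \frac{1}{n\Vol(L)(nm+p)} \int_{\S} h_{\P K}(\theta)^p \rho_L(\theta)^{nm+p} \, d\theta.
\]

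Finally, I would recognize the remaining integral as a dual mixed volume. Since $\PP K = (\P K)^\circ$ and $\P K \in \conbodio$ by Proposition~\ref{p:convexbodyorigin}, we have $\rho_{\PP K}(\theta) = h_{\P K}(\theta)^{-1}$, hence $h_{\P K}(\theta)^p = \rho_{\PP K}(\theta)^{-p}$, and \eqref{eq:dual_mixed} gives
\[
\frac{1}{nm}\int_{\S} \rho_L(\theta)^{nm+p} h_{\P K}(\theta)^p \, d\theta = \widetilde V_{-p,nm}(L, \PP K).
\]
Combining with the preceding display yields $nm \cdot n\Vol(L)(nm+p) V_{p,n}(K,\G L)/(nm) = \widetilde V_{-p,nm}(L,\PP K)$, i.e.\ the claimed identity
\[
\widetilde V_{-p,nm}(L, \PP K) = \frac{(nm+p)\Vol(L)}{m} V_{p,n}(K, \G L).
\]
There is no serious obstacle: the computation is essentially a disguised Fubini, with the polar-coordinate integration producing the factor $\tfrac{1}{nm+p}$ and the dimension ratio $\tfrac{nm}{n} = m$ supplying the $1/m$ in the final constant.
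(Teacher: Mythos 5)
Your proof is correct and takes essentially the same route as the paper: convert the integral over $L$ to polar coordinates (producing the factor $\tfrac{1}{nm+p}$), apply Fubini/Tonelli, and recognize the inner integral over $\s$ as $h_{\P K}(\theta)^p = \rho_{\PP K}(\theta)^{-p}$ so that the whole expression becomes the dual mixed volume. One small slip: your final "combining" line, $nm \cdot n\Vol(L)(nm+p) V_{p,n}(K,\G L)/(nm) = \widetilde V_{-p,nm}(L,\PP K)$, is miswritten; your two preceding displays actually give $n\Vol(L)(nm+p) V_{p,n}(K,\G L) = nm\,\widetilde V_{-p,nm}(L,\PP K)$, which upon dividing by $nm$ yields the stated identity with the $1/m$ as you correctly conclude.
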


\begin{proof} Observe that 
\begin{align*}
&V_{p,n}(K, \G L )
	= \frac 1n \int_{\s}h_{\G L }(\xi)^p d\sigma_{K,p}(\xi)\\
	&=\frac 1 {n\Vol(L)} \int_{\s} \int_L h_Q(\xi^t . x )^p d x d\sigma_{K,p}(\xi)\\
	&= \frac 1 {n\Vol(L)} \int_{\s} \int_{\S} \int_0^{\rho_L(\theta)} t^{nm+p-1} h_Q(\xi^t . \theta )^p dt d\theta d\sigma_{K,p}(\xi)\\
	&= \frac 1 {n(nm+p)\Vol(L)} \int_{\S}\rho_L(\theta)^{nm+p} \int_{\s} h_Q(\xi^t . \theta )^p d\sigma_{K,p}(\xi) d\theta\\
	&= \frac 1 {n(nm+p)\Vol(L)} \int_{\S} \rho_L (\theta)^{nm+p} \rho_{\PP K }(\theta)^{-p}d\theta\\
	&=\frac m {(nm+p)\Vol(L)} \widetilde V _{-p,nm}(L,\PP K ), 
\end{align*}
as required. 
\end{proof}

An immediate result of this duality is the following identity, by setting $K=\G L$.
\begin{proposition}
\label{p:Gvolume}
Fix $p\geq 1$ and $Q\in\conbodo[1,m]$. Then, for every $L \in \sta$, 
	\[
		\vol(\G L)=\frac{m\widetilde V _{-p,nm}(L,\PP \G L )}{(nm+p) \Vol(L)}.
    \]
\end{proposition}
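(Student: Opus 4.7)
The plan is to derive this as a direct corollary of Lemma~\ref{l:dualityofmixedvolume} by substituting $K = \G L$. First I would verify that this substitution is legitimate, i.e., that $\G L \in \conbodio[n,1]$ so the hypotheses of the lemma are satisfied. Since $L \in \sta$ contains the origin in its interior (by definition of a star body) and has positive volume, Proposition~\ref{p:convexbodyorigin} guarantees $\G L \in \conbodio[n,1]$, as needed.

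Setting $K = \G L$ in the conclusion of Lemma~\ref{l:dualityofmixedvolume} yields
\[\widetilde V _{-p,nm}(L,\PP \G L ) = \frac{(nm+p) \Vol(L)} m V_{p,n}(\G L, \G L).\]
The key observation that collapses the right-hand side is the classical self-identity $V_{p,n}(K,K) = \vol(K)$ for every $K \in \conbodio[n,1]$. This is immediate from the integral representation \eqref{eq:LPmixed}: with $L = K$ the integrand becomes $h_K(v)^p \cdot h_K(v)^{1-p}\, d\sigma_K(v) = h_K(v)\, d\sigma_K(v)$, and integrating then gives $\frac{1}{n}\int_{\s} h_K(v)\, d\sigma_K(v) = V_n(K,K) = \vol(K)$ by \eqref{eq:mixed_0}.

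Applying this identity with $K = \G L$ and solving for $\vol(\G L)$ produces the claimed formula
\[\vol(\G L)=\frac{m\widetilde V _{-p,nm}(L,\PP \G L )}{(nm+p) \Vol(L)}.\]
There is no substantial obstacle: the statement is a one-line specialization of the duality lemma together with the self-mixed-volume identity, so the proof amounts essentially to verifying the hypothesis on $\G L$ and performing the substitution.
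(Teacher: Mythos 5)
Your proof is correct and is exactly the paper's argument: substitute $K = \G L$ into Lemma~\ref{l:dualityofmixedvolume} (justified by Proposition~\ref{p:convexbodyorigin}, which gives $\G L \in \conbodio[n,1]$), then use $V_{p,n}(\G L,\G L) = \vol(\G L)$ from \eqref{eq:LPmixed}. The paper simply states this is "immediate by setting $K = \G L$"; you have spelled out the same details.
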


Using the duality property from Lemma~\ref{l:dualityofmixedvolume}, we obtain how $\PP$ behaves under linear transformations.

\begin{proposition}
	\label{p:linearbehaviorProj}
	Fix $p\geq 1$ and $K \in \conbodio[n,1]$. Then, for every $Q \in \conbodo[1,m]$, $A \in GL_n(\R),$ and $B \in \M[m,l]$, one has 
	\begin{align}
 \label{eq:linearbehaviorProjLeft}
	\PP {(A.K)} &= |\det(A)|^{-\frac 1p} A .\PP K, \\
 \label{eq:linearbehaviorProjRight}
	\P[Q.B] K &= (\P K ).B.    
	\end{align} 
\end{proposition}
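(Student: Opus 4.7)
The plan is to reduce both identities to the integral formula \eqref{eq:bival}, namely $h_{\P K}(x)^p = n V_{p,n}(K, x.Q^t)$, together with the interaction of $h_\cdot$ with matrix multiplication recorded in \eqref{eq:sup_mat}, and then apply standard polarity identities.

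For \eqref{eq:linearbehaviorProjLeft}, I would first establish the $L^p$ mixed volume transformation rule
\[
V_{p,n}(A.K, L) \;=\; |\det A|\, V_{p,n}(K, A^{-1}.L)
\]
for $A \in GL_n(\R)$, $K \in \conbodio[n,1]$ and $L \in \conbodo[n,1]$. This is a short computation from the integral representation \eqref{eq:LPmixed}: under the change of variables $v = A^{-t}u/|A^{-t}u|$ on the sphere, $d\sigma_{A.K}$ pulls back to $|\det A||A^{-t}u|\,d\sigma_K$, the weight $h_{A.K}(v)^{1-p} = (h_K(u)/|A^{-t}u|)^{1-p}$ supplies a factor $|A^{-t}u|^{p-1}$, and $h_L(v)^p = h_{A^{-1}.L}(u)^p/|A^{-t}u|^p$, with all powers of $|A^{-t}u|$ cancelling. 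Applying this with $L = x.Q^t$ and using associativity $A^{-1}.(x.Q^t) = (A^{-1}.x).Q^t$ yields, via \eqref{eq:bival},
\[
h_{\P(A.K)}(x)^p \;=\; |\det A|\, h_{\P K}(A^{-1}.x)^p \;=\; |\det A|\, h_{A^{-t}.\P K}(x)^p,
\]
where the last equality is \eqref{eq:sup_mat}. Therefore $\P(A.K) = |\det A|^{1/p}\, A^{-t}.\P K$. Taking polars and using $(\alpha C)^\circ = \alpha^{-1} C^\circ$ together with $(B.C)^\circ = B^{-t}.C^\circ$ (a direct consequence of \eqref{eq:sup_mat}) gives \eqref{eq:linearbehaviorProjLeft}.

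For \eqref{eq:linearbehaviorProjRight} the argument is direct. For $x \in \M[n,l]$, two applications of \eqref{eq:sup_mat} (the second of the two formulas there) give
\[
h_{(\P K).B}(x) \;=\; h_{\P K}(x.B^t) \qquad\text{and}\qquad h_Q\bigl((v^t.x).B^t\bigr) \;=\; h_{Q.B}(v^t.x).
\]
Substituting into Definition~\ref{d:generalprojectionbody} and using associativity $v^t.(x.B^t) = (v^t.x).B^t$, one gets
\[
h_{(\P K).B}(x)^p \;=\; \int_{\s} h_{Q.B}(v^t.x)^p\, d\sigma_{K,p}(v) \;=\; h_{\P[Q.B] K}(x)^p,
\]
which is \eqref{eq:linearbehaviorProjRight}.

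The only nontrivial step is the $L^p$ mixed volume transformation rule used in the first part; once that is in hand, everything else is just bookkeeping with \eqref{eq:sup_mat} and polarity. If one prefers to avoid invoking \eqref{eq:bival}, one can instead substitute $v = A^{-t}u/|A^{-t}u|$ directly inside the defining integral of $h_{\P(A.K)}(x)^p$, performing the same computation but specialised to $L = x.Q^t$.
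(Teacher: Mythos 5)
Your proposal is correct, and for the first identity it follows a genuinely different route from the paper. The paper proves \eqref{eq:linearbehaviorProjLeft} by routing through the duality in Lemma~\ref{l:dualityofmixedvolume}: it shows $\widetilde{V}_{-p,nm}(L, \PP(A.K)) = \widetilde{V}_{-p,nm}(L, A.\PP K)$ for every star body $L$, using Proposition~\ref{p:linearbehaviorCentr} (linear behavior of $\G$) as the key intermediate step, and then invokes the fact that two star bodies with the same dual mixed volumes against all $L$ must coincide; it handles $SL_n(\R)$ first and then extends to $GL_n(\R)$ via homogeneity. You instead compute $h_{\P(A.K)}(x)^p$ directly via the bivaluation formula \eqref{eq:bival} and the $L^p$ mixed volume transformation rule $V_{p,n}(A.K, L) = |\det A|\, V_{p,n}(K, A^{-1}.L)$, obtaining $\P(A.K) = |\det A|^{1/p} A^{-t}.\P K$, and then take polars. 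It is worth noting that both arguments ultimately rest on the same core ingredient: the paper invokes exactly this transformation rule when it writes $V_{p,n}(A.K, \G L) = V_{p,n}(K, A^{-1}.\G L)$ (citing ``properties of mixed volume''), while you apply it with $L = x.Q^t$ instead. What the paper's route buys is consistency with the logical architecture already in place (Lemma~\ref{l:dualityofmixedvolume} and Proposition~\ref{p:linearbehaviorCentr} precede this result); what your route buys is self-containment --- it bypasses the centroid body and the dual mixed volume machinery entirely, works uniformly for all of $GL_n(\R)$ without the $SL_n$/homogeneity split, and produces the formula for $\P(A.K)$ rather than only for its polar. Your proof of \eqref{eq:linearbehaviorProjRight} is essentially identical to the paper's. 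The change-of-variables calculation you sketch as an alternative is also correct (the powers of $|A^{-t}u|$ do cancel as claimed: $-p + (p-1) + 1 = 0$).
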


\begin{proof} We first consider the case when $A\in SL_n(\R).$ Observe that, according to Lemma~\ref{l:dualityofmixedvolume}, for any star body $L\in \sta$, using properties of mixed volume, dual mixed volumes, and Proposition~\ref{p:linearbehaviorCentr}, we have 
\begin{align*}
\left(\frac m {nm +p} \right)\left(\frac{\widetilde V _{-p,nm}(L,\PP A.K )}{\Vol(L)} \right)
  &= V_{p,n}(A.K,\G L )\\
  &= V_{p,n}(K, A^{-1}.\G L )\\
  &= V_{p,n}(K,\G {A^{-1}}.L )\\
  &=\left(\frac m {nm +p} \right)\left(\frac{\widetilde V _{-p,nm}({A^{-1}}.L,\PP K)}{\Vol({A^{-1}}.L)}\right)\\
  &= \left(\frac m {nm +p} \right)\left(\frac{\widetilde V _{-p,nm}(L, A .\PP K)}{\Vol(L)}\right).
\end{align*} Therefore, we have shown that 
\[
\widetilde V _{-p,nm}(L,\PP A.K) = \widetilde V _{-p,nm}(L, A .\PP K)
\]
holds for all $L \in \sta$. Consequently,
\[
\PP A.K = A .\PP K .
\]
The general case $A \in GL_n(\R)$ follows from the homogeneity, $\P{aK}=a^{\frac{n-p} p }\P K , a>0$.

To prove the second statement, for every $x \in \M[n,l],$
\begin{align}
h_{(\P K).B}(x)^p
&= h_{\P K}(x.B^t)^p  \\
&= \int_{\s} h_Q( v^t. x .B^t )^p d \sigma_{K,p}(v) \\
	&= \int_{\s} h_{Q .B}( v^t. x)^p d \sigma_{K,p}(v) \\
&= h_{\P [Q.B]}(x)^p.
\end{align}
\end{proof}

We now study the associated invariant quantity for $\PP $.
\begin{proposition}[The $m$th-order $L^p$ Petty product]
\label{p:affine_invar_Zhang}
	For $m\in\mathbb N $ and $Q\in\conbodo[1,m],$ the following functional is invariant under linear transformation when $p>1$ and affine transformations when $p=1$:
  $$K\in\conbodio[n,1] \mapsto \vol(K)^{\frac{nm} p -m}\Vol(\PP K).$$
\end{proposition}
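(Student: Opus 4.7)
The plan is to derive the invariance directly from Proposition~\ref{p:linearbehaviorProj} together with the volume scaling laws for subsets of $\M[n,1]$ and of $\M$, and then handle the translation part of the $p=1$ case separately using the translation invariance of the classical surface area measure.

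First I would treat the linear case, for general $p \geq 1$ and $A \in GL_n(\R)$. Applying \eqref{eq:linearbehaviorProjLeft} gives $\PP(A.K) = |\det(A)|^{-1/p} A.\PP K$. Since $\PP K \subset \M$, the left-multiplication rule $\Vol(A.D) = |\det(A)|^m \Vol(D)$ together with the $nm$-dimensional scaling rule yields
\[
\Vol(\PP(A.K)) = |\det(A)|^{-nm/p} |\det(A)|^m \Vol(\PP K).
\]
On the other hand, $\vol(A.K) = |\det(A)| \vol(K)$, so
\[
\vol(A.K)^{\frac{nm}{p}-m} = |\det(A)|^{\frac{nm}{p}-m}\vol(K)^{\frac{nm}{p}-m}.
\]
Multiplying these two identities, the powers of $|\det(A)|$ cancel exactly, proving linear invariance.

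For the $p=1$ case, it remains to verify translation invariance; that is, $\PP(K+v_0) = \PP K$ for every $v_0\in\R^n$. Here I would appeal to Definition~\ref{d:generalprojectionbody}: since $p=1$, we have $d\sigma_{K,1} = d\sigma_K$, which is invariant under translations of $K$, so
\[
h_{\Pi_{Q,1}(K+v_0)}(x) = \int_{\s} h_Q(v^t.x)\, d\sigma_{K+v_0}(v) = \int_{\s} h_Q(v^t.x)\, d\sigma_K(v) = h_{\Pi_{Q,1}K}(x),
\]
giving $\Pi_{Q,1}^\circ(K+v_0) = \PP K$. Combined with $\vol(K+v_0) = \vol(K)$ and the linear invariance established above, this yields the claimed affine invariance.

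There is no real obstacle here, as everything reduces to transformation rules already codified in Proposition~\ref{p:linearbehaviorProj}. The one subtle point worth emphasizing in the write-up is why translation invariance breaks down for $p>1$: the $L^p$ surface area measure $d\sigma_{K,p} = h_K^{1-p} d\sigma_K$ depends on the support function, which is not translation invariant, so one cannot pass from linear to affine invariance when $p>1$.
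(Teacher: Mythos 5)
Your proof is correct and follows essentially the same route as the paper's: deduce the linear invariance from the transformation rule $\PP(A.K) = |\det(A)|^{-1/p} A.\PP K$ in Proposition~\ref{p:linearbehaviorProj} together with the volume scaling laws, and then reduce the affine case for $p=1$ to the linear case via translation invariance of $\vol$ and of $\sigma_K$. The only difference is that you spell out the determinant bookkeeping explicitly, which the paper leaves to the reader.
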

\begin{proof}
  When $p=1$, we can use the fact that both volume, as a functional on $\M[n,1],$ and the
  surface area measure are translation invariant to reduce to the case of linear transformations. Then, for all $p \geq 1,$ the claim is immediate from Proposition~\ref{p:linearbehaviorProj}.
\end{proof}

\begin{proposition}[Continuity of $\P $] \label{p:continuityofmultdimproj}
  Let $m \in \N$, $p \geq 1$, and $Q \in \conbodo[1,m]$. 
  Let $\{K_j\}_j \subset \conbodio[n,1]$ be a sequence of convex bodies such that $K_j\to K$ with respect to the Hausdorff metric on $\conbodo[n,1]$, where $K\in\conbodo[n,1]$ has the property that $\sigma_{K,p}$ is a finite Borel measure on $\s$. Then one has $\P K_j \to \P K $ with respect to the Hausdorff metric on $\conbodio.$
\end{proposition}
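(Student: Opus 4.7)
The plan is to reduce the Hausdorff convergence $\P K_j \to \P K$ to pointwise convergence of the support functions $h_{\P K_j}^p(x) \to h_{\P K}^p(x)$ at every fixed $x \in \S$. This reduction is standard: the $h_{\P K_j}$ are convex, $1$-homogeneous, and (using $K_j \to K$ in Hausdorff metric) uniformly bounded on $\S$, so pointwise convergence on $\S$ automatically upgrades to uniform convergence on compact subsets of $\M$, which is equivalent to Hausdorff convergence of the associated convex bodies. By Definition~\ref{d:generalprojectionbody}, the remaining task is therefore to show, for each $x \in \S$, that
\[
\int_{\s} h_Q(v^t.x)^p \, d\sigma_{K_j,p}(v) \longrightarrow \int_{\s} h_Q(v^t.x)^p \, d\sigma_{K,p}(v).
\]

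Writing $d\sigma_{L,p}(v) = h_L(v)^{1-p}\, d\sigma_L(v)$, I would rewrite the integrals above as
\[
\int_{\s} g_j(v) \, d\sigma_{K_j}(v) \quad \text{and} \quad \int_{\s} g(v)\, d\sigma_K(v),
\]
where $g_j(v) = h_Q(v^t.x)^p \, h_{K_j}(v)^{1-p}$ and $g(v) = h_Q(v^t.x)^p \, h_K(v)^{1-p}$. Two classical ingredients from convex geometry then come into play: (i) Hausdorff convergence $K_j \to K$ is equivalent to uniform convergence $h_{K_j} \to h_K$ on $\s$; and (ii) Hausdorff convergence of convex bodies implies weak-$*$ convergence of surface area measures $\sigma_{K_j} \to \sigma_K$ on $\s$ (see e.g., Schneider \cite{Sh1}). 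Since $v \mapsto h_Q(v^t.x)^p$ is continuous on $\s$, if one can also show uniform convergence $g_j \to g$, then combining uniform convergence of the integrands with weak-$*$ convergence of the measures yields the required limit.

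In the principal case $K \in \conbodio[n,1]$, the support function $h_K$ is strictly positive on the compact set $\s$, hence bounded below by a positive constant $c > 0$; uniform convergence $h_{K_j} \to h_K$ then gives $h_{K_j} \geq c/2$ on $\s$ for all sufficiently large $j$. On the interval $[c/2, M]$ (where $M$ bounds $h_{K_j}$ uniformly), the function $t \mapsto t^{1-p}$ is Lipschitz, which upgrades the uniform convergence $h_{K_j} \to h_K$ to uniform convergence $g_j \to g$, and the argument concludes cleanly.

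The main obstacle is the possibility that $K \in \conbodo[n,1] \setminus \conbodio[n,1]$, in which case $h_K$ vanishes on a nonempty subset of $\s$ and, for $p > 1$, the weight $h_K^{1-p}$ develops singularities; then $g_j \to g$ only pointwise, not uniformly, and the simple weak-$*$ argument breaks down. To handle this, I would perform a cutoff: split $\s$ into $\{v : h_K(v) \geq \varepsilon\}$, where the uniform convergence argument still applies, and its complement, whose $\sigma_{K,p}$-measure one controls by the standing hypothesis that $\sigma_{K,p}$ is a finite Borel measure, letting $\varepsilon \to 0$ at the end. The delicate step here is obtaining a uniform-in-$j$ bound on the contribution from the cutoff region, which would require either an extra monotonicity/domination argument or an absolute-continuity property of $\sigma_{K_j,p}$ with respect to $\sigma_{K,p}$ that can be extracted from the Hausdorff convergence $K_j \to K$.
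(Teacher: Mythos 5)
Your route is genuinely different from the paper's. The paper does not work with the integral $\int_{\s} h_Q(v^t.x)^p\,d\sigma_{K,p}(v)$ directly at all. Instead it uses the identity \eqref{eq:bival}, $h_{\P K}(x)^p = nV_{p,n}(K,\,x.Q^t)$, together with two black-boxed facts: the map $x\mapsto x.Q^t$ from $\S$ to $\conbodo[n,1]$ is continuous in the Hausdorff metric, and $L^p$ mixed volumes are Hausdorff-continuous (cited from Lutwak \cite{LE93}). Joint continuity of $(K,x)\mapsto h_{\P K}(x)$ then falls out, and compactness of $\S$ upgrades this to uniform convergence of support functions. By contrast, you unpack $d\sigma_{K,p}=h_K^{1-p}d\sigma_K$, use weak-$*$ convergence of surface area measures plus uniform convergence of support functions, and argue at the level of the integrand. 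Both are legitimate starting points, and your reduction from Hausdorff convergence to pointwise convergence of support functions (via convexity and $1$-homogeneity) is correct. The paper's approach buys brevity by offloading the hard analysis onto a cited lemma; yours is more self-contained and transparent about where the difficulty lies.

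That said, your argument is complete only when $K\in\conbodio[n,1]$, i.e.\ when $h_K$ is bounded away from zero on $\s$. But that case is easy and does not even require the hypothesis that $\sigma_{K,p}$ be finite --- this is automatic when $o\in\operatorname{int}K$. The entire point of assuming ``$\sigma_{K,p}$ is a finite Borel measure'' is to allow $o\in\partial K$, where $h_K$ vanishes on a nonempty set and $h_K^{1-p}$ is singular for $p>1$. You correctly flag this as the delicate case, but your proposed cutoff does not close it: the finiteness of $\sigma_{K,p}$ controls $\int_{\{h_K<\varepsilon\}} h_K^{1-p}\,d\sigma_K$ in the limit, but gives you nothing about $\int_{\{h_K<\varepsilon\}} h_{K_j}^{1-p}\,d\sigma_{K_j}$ uniformly in $j$, which is what the cutoff argument needs. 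No such uniform-in-$j$ bound follows from Hausdorff convergence alone; you would need a genuinely new estimate relating $\sigma_{K_j,p}$ near the singular set to $\sigma_{K,p}$, or a separate convexity/monotonicity input. As it stands, the heart of the proposition --- the singular case --- is left open in your proposal, whereas the paper disposes of it by appealing to the continuity of $V_{p,n}$ in the Hausdorff metric on the relevant class.
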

\begin{proof} The statement that $\P K_j \!\to\! \P K $ in the Hausdorff metric is equivalent to the claim that $h_{\P K_j } \!\to\! h_{\P K }$ uniformly on $\S$. One easily verifies that the map $\S \!\to\! \conbodo[n,1]$ defined by $x \mapsto x.Q^t$ is continuous with respect to the Hausdorff metric (and we use this since we recall, for every $v\in\M[n,1]$, $h_Q(v^t.x)\!=\!h_{x.Q^t}(v)$). Additionally, the $L^p$ mixed volumes are continuous in the Hausdorff metric \cite{LE93}. Combining these two facts, one deduces that the map $\conbodo[n,1] \!\times\! \S \!\to\! \mathbb R^+$ defined by $(K, x) \!\to\! h_{\P K }(x)$ is continuous when restricted to the subset of $\conbodo[n,1]$ consisting of those $K$ whose $L^p$ surface area measure is finite. Since $\S$ is compact,  the map $\conbodio[n,1] \!\!\to\! C(\S)$ defined by $K\! \mapsto\! h_{\P K}$ is continuous, which precisely means that if $K_j \!\to\! K,$ where $K\!\in\conbodo[n,1]$ is a limit point of $\conbodio[n,1]$ with finite $L^p$ surface area measure, then $h_{\P K_j } \!\to\! h_{\P K }$ uniformly on $\S$.
\end{proof}

We remark that in Proposition~\ref{p:continuityofmultdimproj} the assumptions on $K$ can be lessened to $K\in\conbod[n,1]$ when $p=1$. Notice that $\PP:\conbodo[n,1] \to \conbodio$ and $\G :\sta \to \conbodio[n,1]$.
The following lemma shows that the $m$th-order $L^p$ Petty product increases when $K$ is replaced by $\G \PP K$, which shows in particular that a necessary condition for equality in the $m$th-order $L^p$ Petty projection inequality, Theorem~\ref{t:PPIGeneral}, is that $K$ belongs to subclass of $\conbodio[n,1]$ that is invariant under the operator $\G \PP$.
\begin{lemma}
\label{l:invar}
Fix $n,m \in \N$, $p\geq 1$ and $Q\in\conbodo[1,m]$. Then, for every $K\in\conbodio[n,1]$, one obtains
\begin{align*}\vol(K)^{\frac{nm} p -m}&\Vol(\PP K) 
\\
&\leq \vol(\G \PP K)^{\frac{nm} p -m}\Vol(\PP \G \PP K),
\end{align*}
with equality if and only if $K$ is a dilate of $\G \PP K$.
\end{lemma}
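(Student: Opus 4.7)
Write $M := \G \PP K$ for brevity. The plan is to feed Lemma~\ref{l:dualityofmixedvolume} into the pair of Minkowski first inequalities \eqref{eq:Firey_Min_first} and \eqref{dual_Min_first}, using the duality \emph{twice}, once with the pair $(K,\PP K)$ and once with the pair $(M,\PP K)$, so that the dimension-dependent normalizing constants cancel when the two resulting bounds are multiplied.

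For the direct side, set $L=\PP K$ in Lemma~\ref{l:dualityofmixedvolume}. The integral formula for the dual mixed volume gives $\widetilde V_{-p,nm}(\PP K,\PP K)=\Vol(\PP K)$, so the duality collapses to the explicit identity $V_{p,n}(K,M)=m/(nm+p)$. The $L^p$ Minkowski first inequality \eqref{eq:Firey_Min_first} then yields
\[
\vol(K)^{n-p}\vol(M)^{p} \;\le\; \left(\frac{m}{nm+p}\right)^{n},
\]
with equality iff $K$ is a dilate of $M$ (and homothetic to $M$ when $p=1$).

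For the dual side, apply Lemma~\ref{l:dualityofmixedvolume} again with $L=\PP K$, but now to the body $M$ in place of $K$. Since $V_{p,n}(M,\G\PP K)=V_{p,n}(M,M)=\vol(M)$, this produces
\[
\widetilde V_{-p,nm}(\PP K,\PP M) \;=\; \frac{nm+p}{m}\,\Vol(\PP K)\,\vol(M).
\]
Plugging into the dual $L^p$ Minkowski first inequality \eqref{dual_Min_first} and dividing through by $\Vol(\PP K)^{nm}$ gives
\[
\left(\frac{\Vol(\PP K)}{\Vol(\PP M)}\right)^{p} \;\le\; \left(\frac{nm+p}{m}\right)^{nm}\vol(M)^{nm}.
\]

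Finally, raise the direct bound to the $m$-th power and multiply by the dual bound: the constants $(m/(nm+p))^{nm}$ and $((nm+p)/m)^{nm}$ cancel exactly, and using $nm-mp=m(n-p)$ one obtains
\[
\left(\frac{\Vol(\PP K)}{\Vol(\PP M)}\right)^{p} \;\le\; \left(\frac{\vol(M)}{\vol(K)}\right)^{m(n-p)}.
\]
Taking $p$-th roots and rearranging, with $m(n-p)/p = nm/p - m$, yields the claimed inequality. The main (mild) obstacle is spotting that Lemma~\ref{l:dualityofmixedvolume} must be invoked with two different choices of the first argument so that the normalizing constants are reciprocals of each other; once this is set up, the equality analysis is immediate. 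Equality forces both Minkowski-type inequalities to be equalities; the binding one is the direct Minkowski equality, which forces $K$ to be a dilate of $M$, and by Proposition~\ref{p:linearbehaviorProj} the resulting $\PP K$ is then automatically a dilate of $\PP M$, so the dual equality holds for free.
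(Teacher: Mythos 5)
Your proof is correct and takes essentially the same route as the paper: your ``direct side'' is exactly Proposition~\ref{p:BP_P_rel2} (the duality lemma with $L=\PP K$ giving $V_{p,n}(K,\G\PP K)=m/(nm+p)$, followed by the $L^p$ Minkowski first inequality), and your ``dual side'' is Proposition~\ref{p:BP_P_rel} specialized to $L=\PP K$ (i.e.\ Proposition~\ref{p:Gvolume} plus the dual $L^p$ Minkowski inequality), with the lemma obtained by concatenating the two just as the paper does. The equality discussion, including the observation that the dual-side equality follows for free from $K$ being a dilate of $\G\PP K$ via the homogeneity in Proposition~\ref{p:linearbehaviorProj}, also matches the paper's argument.
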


The proof of Lemma~\ref{l:invar} is just concatenating Propositions \ref{p:BP_P_rel} and \ref{p:BP_P_rel2} below. 
\begin{proposition}
    \label{p:BP_P_rel}
    Fix $n,m\in\N$, $p\geq 1$ and $Q\in\conbodo[1,m]$. Then, for $L\in\sta$,
\begin{equation}
    \vol(\G L )^{\frac{nm}{p}-m}\Vol(\PP \G L )\geq \left(\frac m {nm+p}\right)^{\frac{nm}{p}} \left(\frac{\vol(\G L )}{\Vol(L)^{1/m}}\right)^{-m}.
\end{equation}
  with equality if and only if $L$ is a dilate of $\PP \G L $.
\end{proposition}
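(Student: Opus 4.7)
The plan is to combine the duality identity of Lemma~\ref{l:dualityofmixedvolume} with the dual $L^p$ Minkowski first inequality \eqref{dual_Min_first}. The key observation is that the proposition concerns only the operator $\G$ applied to $L$, so the natural move is to invoke the duality in the ``self-dual'' form obtained by specializing $K=\G L$, which will collapse the right-hand side to a volume of $\G L$.

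More precisely, I would first apply Lemma~\ref{l:dualityofmixedvolume} with $K=\G L$, yielding
\[
\widetilde V_{-p,nm}(L,\PP \G L)=\frac{(nm+p)\Vol(L)}{m}\,V_{p,n}(\G L,\G L).
\]
Since $V_{p,n}(\G L,\G L)=\vol(\G L)$ by the definition of $L^p$ mixed volume in \eqref{eq:LPmixed}, this becomes
\[
\widetilde V_{-p,nm}(L,\PP \G L)=\frac{(nm+p)\Vol(L)\vol(\G L)}{m}.
\]

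Next, I would apply the dual $L^p$ Minkowski first inequality \eqref{dual_Min_first} in dimension $nm$ with the two star bodies $L$ and $\PP \G L$:
\[
\Vol(L)^{nm+p}\,\Vol(\PP \G L)^{-p}\leq \widetilde V_{-p,nm}(L,\PP \G L)^{nm}.
\]
Substituting the identity above and taking $p$-th roots gives
\[
\Vol(L)\leq \left(\frac{nm+p}{m}\right)^{\!nm/p}\!\vol(\G L)^{nm/p}\,\Vol(\PP \G L)^{1/1},
\]
which rearranges to
\[
\vol(\G L)^{\frac{nm}{p}-m}\Vol(\PP \G L)\geq \left(\frac{m}{nm+p}\right)^{\!\frac{nm}{p}}\!\vol(\G L)^{-m}\,\Vol(L),
\]
exactly the claimed bound after recognizing the right-hand side as $\left(\frac{m}{nm+p}\right)^{nm/p}\bigl(\vol(\G L)/\Vol(L)^{1/m}\bigr)^{-m}$.

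For the equality case, the only inequality used is \eqref{dual_Min_first}, which is sharp precisely when $L$ and $\PP \G L$ are dilates. This gives exactly the stated equality condition. I do not anticipate a real obstacle here: the argument is essentially a one-line application of the duality formula followed by the dual Minkowski inequality, and the only care needed is in tracking the exponents $nm$, $nm+p$ and $p$ correctly when converting between the mixed-volume form and the volume form.
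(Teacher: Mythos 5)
Your proposal is correct and is essentially the paper's own argument: specializing the duality identity to $K=\G L$ is exactly the content of Proposition~\ref{p:Gvolume}, and the paper's proof likewise concludes by applying the dual $L^p$ Minkowski first inequality \eqref{dual_Min_first}. You merely spell out the algebra that the paper leaves implicit, and the equality condition is tracked correctly.
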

\begin{proof}
    The result is immediate from Proposition~\ref{p:Gvolume} and the dual $L^p$ Minkowski first inequality \eqref{dual_Min_first}.
\end{proof}

\begin{proposition}
\label{p:BP_P_rel2}
    Fix $n,m \in \N$, $p\geq 1$ and $Q\in\conbodo[1,m]$. Then, for every $K\in\conbodio[n,1]$, one obtains
    \[
    \left(\frac{m}{nm+p}\right)^\frac{nm}{p}\left(\frac{\vol(\G \PP K)}{\Vol(\PP K)^{1/m}}\right)^{-m} \geq \vol(K)^{\frac{nm}{p}-m}\Vol(\PP K),
    \]
    with equality if and only if $K$ is a dilate ($p>1$) of, or homothetic to ($p=1$), $\G\PP K$.
\end{proposition}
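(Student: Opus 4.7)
The plan is to combine the duality identity of Lemma~\ref{l:dualityofmixedvolume} with the $L^p$ Minkowski first inequality \eqref{eq:Firey_Min_first}. The key observation is that when we specialize the duality identity to $L = \PP K$, the left-hand side collapses to the volume of $\PP K$.

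First, I would apply Lemma~\ref{l:dualityofmixedvolume} with $L = \PP K \in \sta$, giving
\[
\widetilde V_{-p,nm}(\PP K, \PP K) \;=\; \frac{(nm+p)\,\Vol(\PP K)}{m}\, V_{p,n}(K, \G \PP K).
\]
Since $\widetilde V_{-p,nm}(L,L) = \Vol(L)$ for any $L \in \sta$ (immediate from the defining integral \eqref{eq:dual_mixed}), the left-hand side equals $\Vol(\PP K)$, and dividing through yields the key identity
\[
V_{p,n}(K, \G \PP K) \;=\; \frac{m}{nm+p}.
\]

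Next, I would apply the $L^p$ Minkowski first inequality \eqref{eq:Firey_Min_first} to the pair $K$ and $\G \PP K$ (both in $\conbodio[n,1]$ by Proposition~\ref{p:convexbodyorigin}):
\[
\left(\frac{m}{nm+p}\right)^{n} \;=\; V_{p,n}(K, \G \PP K)^{n} \;\geq\; \vol(K)^{n-p}\,\vol(\G \PP K)^{p}.
\]
Raising both sides to the power $m/p$ gives
\[
\left(\frac{m}{nm+p}\right)^{nm/p} \;\geq\; \vol(K)^{(n-p)m/p}\,\vol(\G \PP K)^{m} \;=\; \vol(K)^{nm/p - m}\,\vol(\G \PP K)^{m}.
\]
Multiplying both sides by $\Vol(\PP K)\,\vol(\G \PP K)^{-m}$ and rewriting the right-hand side as $\vol(\G \PP K)^{-m}\,\Vol(\PP K)\cdot\vol(K)^{nm/p-m}$, I obtain exactly
\[
\left(\frac{m}{nm+p}\right)^{nm/p}\!\left(\frac{\vol(\G \PP K)}{\Vol(\PP K)^{1/m}}\right)^{-m} \;\geq\; \vol(K)^{nm/p - m}\Vol(\PP K),
\]
which is the claimed inequality.

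For the equality conditions, the only inequality used is the $L^p$ Minkowski first inequality, whose equality cases are exactly that $K$ is a dilate of $\G \PP K$ when $p>1$, and that $K$ is homothetic to $\G \PP K$ when $p=1$. There is no real obstacle in this argument; the only subtlety is recognizing that choosing $L = \PP K$ in the duality lemma produces the self-dual quantity $\widetilde V_{-p,nm}(\PP K, \PP K) = \Vol(\PP K)$, which is what makes the mixed volume $V_{p,n}(K, \G\PP K)$ equal to the explicit universal constant $m/(nm+p)$.
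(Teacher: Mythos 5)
Your proof is correct and takes essentially the same route as the paper: specialize Lemma~\ref{l:dualityofmixedvolume} to $L = \PP K$ so that $\widetilde V_{-p,nm}(\PP K,\PP K) = \Vol(\PP K)$ forces $V_{p,n}(K,\G\PP K) = \tfrac{m}{nm+p}$, then apply the $L^p$ Minkowski first inequality and rearrange. The only difference is that you spell out the intermediate identity and the verification that $\G\PP K \in \conbodio[n,1]$, steps which the paper leaves implicit.
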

\begin{proof}
    Set in Lemma~\ref{l:dualityofmixedvolume} that $L=\PP K$. Then, one obtains from the $L^p$ Minkowski first inequality \eqref{eq:Firey_Min_first},
    \begin{align*}
		\left(\frac{m}{nm+p}\right)^\frac{nm}{p} &= V_{p,n}(K, \G \PP K )^\frac{nm}{p} 
  \\
  &\geq \vol(K)^{\frac{nm}{p}-m}\vol(\G \PP K)^{m}.
    \end{align*}
\end{proof}

Finally, we establish that the class of origin symmetric ellipsoids is invariant under $\G \PP$.
\begin{lemma}
\label{l:class}
	Let $Q\in\conbodo[1,m]$ and $p\geq 1$. Let $E$ be an origin symmetric ellipsoid in $\M[n,1]$. Then, 
 \[\G \PP E = \vol(\B)^{\frac{1}{p}}\vol(E)^{-\frac 1 {p}} C_{n,m, p} E,\]
 where
 \begin{equation}
  \label{eq:elipp_cal}
    C_{n,m, p}=\left(\frac m {\vol(\B)(nm+p)}\right)^{\frac 1p }.
  \end{equation}
\end{lemma}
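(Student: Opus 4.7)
The plan is to reduce to the case $E=\B$ via the $GL_n(\R)$-equivariance of $\PP$ and $\G$, then exploit $\mathcal O(n)$-symmetry to conclude $\G\PP\B$ is a Euclidean ball, and finally pin down its radius using Proposition~\ref{p:Gvolume}.

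First I would write $E = A.\B$ for some $A\in GL_n(\R)$. Applying \eqref{eq:linearbehaviorProjLeft} gives $\PP E = |\det A|^{-1/p}\, A.\PP\B$. A short change of variables in Definition~\ref{d:generalcentroidbody} shows $\G(aL) = a\,\G L$ for $a>0$, and Proposition~\ref{p:linearbehaviorCentr} gives $\G(A.L) = A.\G L$. Combining these yields
\[
\G\PP E \;=\; |\det A|^{-1/p}\, A.\G\PP\B.
\]
Since $|\det A|^{-1/p} = \vol(\B)^{1/p}\vol(E)^{-1/p}$, the problem reduces to the identity $\G\PP\B = C_{n,m,p}\B$.

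Next I would observe that $d\sigma_{\B,p}$ coincides (up to the factor $h_\B^{1-p}\equiv 1$) with the $\mathcal O(n)$-invariant spherical Lebesgue measure on $\s$. A change of variable $w = T^t.v$ in the integral defining $h_{\P\B}$ then shows $\P\B = T.\P\B$ for every $T\in\mathcal O(n)$; since $T^{-t} = T$ implies $(T.K)^\circ = T.K^\circ$ via \eqref{eq:sup_mat}, $\PP\B$ inherits this symmetry, and Proposition~\ref{p:linearbehaviorCentr} transfers it to $\G\PP\B$. Being $\mathcal O(n)$-invariant and lying in $\conbodio[n,1]$, $\G\PP\B$ must equal $r\B$ for some $r>0$.

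To compute $r$, I would plug $L=\PP\B$ into Proposition~\ref{p:Gvolume}. The scaling $d\sigma_{aK,p} = a^{n-p}d\sigma_{K,p}$ yields $\PP(aK) = a^{-(n-p)/p}\PP K$, hence $\PP\G\PP\B = r^{-(n-p)/p}\PP\B$. Combining the polar-coordinate identity $\widetilde V_{-p,nm}(L,L) = \Vol(L)$ with the homogeneity $\widetilde V_{-p,nm}(L, aL) = a^{-p}\Vol(L)$ gives $\widetilde V_{-p,nm}(\PP\B, \PP(r\B)) = r^{n-p}\Vol(\PP\B)$, and Proposition~\ref{p:Gvolume} collapses to the single equation
\[
r^n \vol(\B) \;=\; \frac{m\, r^{n-p}}{nm+p},
\]
i.e.\ $r^p = m/[\vol(\B)(nm+p)] = C_{n,m,p}^p$, as desired. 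The one technical nuisance is bookkeeping the homogeneity exponents for $\PP$ and $\widetilde V_{-p,nm}$; once these are aligned correctly, no further machinery is needed.
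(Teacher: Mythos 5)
Your proof is correct and follows essentially the same route as the paper: reduce to $E=\B$ by linear equivariance, use $\mathcal O(n)$-invariance to see $\G\PP\B$ is a ball, then solve for the radius using the duality between $\G$ and $\PP$. The only cosmetic difference is in the final step: the paper plugs $K=\B$, $L=\PP\B$ directly into Lemma~\ref{l:dualityofmixedvolume} and computes $V_{p,n}(\B,C_{n,m,p}\B)$, whereas you plug $L=\PP\B$ into the derived Proposition~\ref{p:Gvolume} and track the homogeneity of $\PP$ through $\PP\G\PP\B$ and the dual mixed volume — a slightly longer but equivalent bookkeeping.
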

\begin{proof}
First, we show that $\G \PP{\B}$ is rotation invariant, and thus a dilate of $\B$.
	We first notice that, for every $T\in \mathcal O (n)$, Propositions \ref{p:linearbehaviorCentr} and \ref{p:linearbehaviorProj} yield 
 \[\G \PP \B = \G \PP T.\B = \G T.\PP \B= T. \G \PP \B;\]
this means that $\G \PP \B$ is rotation invariant and thus a ball. Next, let $E\in\conbodio[n,1]$ be an origin symmetric ellipsoid. Then, there exists $T\in GL_n(\R)$ so that $E=T. \B.$ From Proposition~\ref{p:linearbehaviorProj}, we have
   $$\PP T .\B = |\det (T)|^{-\frac 1p} T .\PP \B .$$
   Applying $\G,$ we obtain
   \begin{align*}
   \G \PP T. \B 
   &=\G |\det (T)|^{-\frac 1p }T. \PP \B  \\
   &=|\det (T)|^{-\frac 1p }T.\G \PP \B  \\
   &=\vol(\B)^{\frac 1 {p}}\vol(E)^{-\frac 1 {p}}C_{n,m, p} T .\B
   \end{align*}
   for some $C_{n,m,p}>0.$ To establish the formula for $C_{n,m,p}$, set $K=\B$ and $L= \PP \B$ in in Lemma~\ref{l:dualityofmixedvolume} to obtain:
   \begin{align*}
		\frac{m}{(nm+p)} &=  V_{p,n}(\B, \G \PP \B) 
  \\
  &= V_{p,n}(\B, C_{n,m,p} \B)=\vol(\B)C_{n,m,p}^p.
    \end{align*}
\end{proof}

\section{On $m$th-Order Isoperimetric Inequalities}

\subsection{The Petty and Busemann-Petty inequalities} \label{sec:geometricineqs}

Bianchi, Gardner and Gronchi \cite{BGG17} introduced an abstract framework of Steiner symmetrization, expanding the concept of  the Fibre combination of convex bodies introduced by  McMullen \cite{McM99}. A particular case of this framework, isolated in \cite{HLPRY23} by the authors, is the natural analogue of Steiner symmetrization in the higher-order setting defined as follows:

\begin{definition}\label{d:multidimSteinerSymmetrization}

	Fix $m,n \in \N$. For $v \in\s$, consider the $m$-dimensional space $[ v ]:=\{v. t: t \in \M[1,m]\} \subseteq \M[n,m]$ and let $V(v)$ be its orthogonal complement, this is
	\[V(v) = \{x \in \M: v^t.x = o \in \M[1,m]\}.\]
Let $L \subseteq \M$ be a compact set with non-empty interior.
We define the $m$th-order Steiner symmetral of $L$ with respect to $v$
\begin{equation}
	\label{eq:sym_def}
 \begin{split}
	\bar S_v L=
	\bigg\{y+ v .\frac{t-s}2 \in & \M \colon y \in V(v), t,s \in \M[1,m], 
 \\
 &(y + v .t), (y + v. s) \in L \bigg\}.
 \end{split}
\end{equation}
\end{definition}

It was shown in \cite{UJ23} that, for $L\in\conbod$, 
\begin{equation}\vol[nm](L)\leq \vol[nm](\bar S_v L).
\label{eq:vol_not_dec}
\end{equation} Using this symmetrization, in conjunction with techniques established in \cite{LYZ00} and used in \cite{HS09,HLPRY23}, we will establish a set-inclusion to then prove Theorem~\ref{t:PPIGeneral}.

\begin{lemma}
	\label{l:multidimSteiner}
	Fix $v \in \s$, $Q\in\conbodo[1,m],$ and $p\geq 1$.	For $K\in\conbodio[n,1]$,
	\[\bar S_v \PP K \subseteq \PP  S_v K .\]
\end{lemma}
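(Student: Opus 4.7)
The plan is to adapt the Steiner-symmetrization strategy of Lutwak-Yang-Zhang \cite{LYZ00} and Haberl-Schuster \cite{HS09}, generalized to the $m$th-order setting with the convex body $Q$. Setup: take $x \in \bar S_v \PP K$ and use the definition of $\bar S_v$ to write $x = y_0 + v.\frac{t-s}{2}$ for some $y_0 \in V(v)$ and $t, s \in \M[1,m]$ with $y_0 + v.t, y_0 + v.s \in \PP K$, i.e., $h_{\P K}(y_0+v.t), h_{\P K}(y_0+v.s) \leq 1$; the goal is $h_{\P S_v K}(x) \leq 1$.

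Two symmetry observations are used throughout. First, the $L^p$ surface area measure $\sigma_{S_v K, p}$ is invariant under the reflection $R_v:u\mapsto \bar u$ across $v^\perp \subset \M[n,1]$, since $S_v K$ itself is $R_v$-invariant. Second, the matrix identity $\bar u^t.z = u^t.\bar z$ (where $\bar z$ denotes reflection of $z \in \M$ across $V(v)$), combined with the first, yields $h_{\P S_v K}(z) = h_{\P S_v K}(\bar z)$, so $\P S_v K$ is itself symmetric across $V(v)$. The heart of the argument is then to establish the key inequality
\begin{equation*}
 h_{\P S_v K}\!\left(y_0 + v.\tfrac{t-s}{2}\right)^p \leq \tfrac{1}{2}\!\left[h_{\P K}(y_0 + v.t)^p + h_{\P K}(y_0 + v.s)^p\right],
\end{equation*}
from which the desired conclusion is immediate.

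To establish this, I parametrize $\partial K$ over its projection $P_{v^\perp} K \subseteq V(v)$ via upper and lower boundary functions $f^\pm$; the corresponding boundary functions of $\partial S_v K$ are $\pm g$ with $g=(f^+-f^-)/2$. After this change of variables the $L^p$ mixed-volume integrand simplifies cleanly, since the Jacobian factors $\sqrt{1+|\nabla f^\pm|^2}$ cancel against the $h_K^{1-p}$ factors, yielding for any $r \in \M[1,m]$
\begin{equation*}
 h_{\P K}(y_0+v.r)^p = \int_{P_{v^\perp} K}\!\left[h_Q(r - \gamma^+)^p (H^+)^{1-p} + h_Q(\gamma^- - r)^p (H^-)^{1-p}\right] dy,
\end{equation*}
with $\gamma^\pm(y) = y_0^t.\nabla f^\pm(y) \in \M[1,m]$ and $H^\pm(y) = f^\pm(y) \mp \langle\nabla f^\pm(y),y\rangle$; the analogous formula for $S_v K$ uses $\gamma^S = \frac{1}{2}(\gamma^+-\gamma^-)$ and $H^+_S = \frac{1}{2}(H^+ + H^-)$ (with $H^-_S=H^+_S$ by the $V(v)$-symmetry above). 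The key inequality is then proved by combining two convexity principles: the convexity of $h_Q^p$ on $\M[1,m]$ (valid since $h_Q$ is sublinear and $p\geq 1$) applied to decompositions such as $\frac{t-s}{2} - \gamma^S = \frac{1}{2}[(t-\gamma^+) - (s-\gamma^-)]$, together with the convexity of $x\mapsto x^{1-p}$ on $(0,\infty)$ for $p \geq 1$ used to compare the $H^+_S$ factor with $(H^+)^{1-p}$ and $(H^-)^{1-p}$.

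The main obstacle is the delicate bookkeeping required to correctly pair the terms arising from the upper/lower contributions and to reconcile the $h_Q^p$-convexity estimate with the $x^{1-p}$-convexity estimate: in the classical case ($m=1$, $Q=[-1,1]$) the argument reduces to scalar inequalities with absolute values, but in our setting $h_Q$ is a general convex positively homogeneous functional on $\M[1,m]$ and the matrix-valued analog must be tracked carefully. Finally, the smoothness assumption used for the parametrization over $P_{v^\perp} K$ is removed by the standard approximation of $K\in\conbodio[n,1]$ by convex bodies with positive Gauss curvature, combined with the continuity of $\P$ established in Proposition~\ref{p:continuityofmultdimproj}.
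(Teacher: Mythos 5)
Your route --- parametrize $\partial K$ over $P_{v^\perp}K$, change variables so the Jacobian cancels against the $1$-homogeneity of $h_Q$, and conclude by convexity --- is exactly the paper's, and your boundary-integral formula is (modulo two typos noted below) the one the paper derives. The genuine gap is in the step you leave to ``bookkeeping'': applying convexity of $h_Q^p$ and convexity of $x\mapsto x^{1-p}$ \emph{separately} and multiplying the resulting bounds does not yield the key inequality. After sublinearity you get $h_Q\bigl(\tfrac12(t-\gamma^+)+\tfrac12(\gamma^--s)\bigr)^p \le \bigl(\tfrac12 h_Q(t-\gamma^+)+\tfrac12 h_Q(\gamma^--s)\bigr)^p$, and multiplying this by the separate estimate $(H_S^+)^{1-p}\le\tfrac12(H^+)^{1-p}+\tfrac12(H^-)^{1-p}$ produces four cross-terms with coefficient $\tfrac14$, two of which pair the ``upper'' $h_Q$ argument with the ``lower'' support factor (e.g.\ $h_Q(t-\gamma^+)^p(H^-)^{1-p}$) and cannot be absorbed into the target $\tfrac12 h_{\P K}(x+v.t)^p+\tfrac12 h_{\P K}(x+v.s)^p$. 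What is required is the \emph{coupled} inequality $(\alpha+\beta)^p(\gamma+\lambda)^{1-p}\le\alpha^p\gamma^{1-p}+\beta^p\lambda^{1-p}$ for $\alpha,\beta,\gamma,\lambda>0$ (Lemma~8 of \cite{LYZ00}; equivalently, subadditivity, hence joint convexity, of the $1$-homogeneous function $(\alpha,\gamma)\mapsto\alpha^p\gamma^{1-p}$). One application of it to each of the two integrands, after sublinearity of $h_Q$, pairs the terms correctly and finishes the proof, which is exactly what the paper does. Two minor corrections to your setup: for the matrix product to type-check you need $\gamma^\pm(y)=\nabla f^\pm(y)^t. y_0\in\M[1,m]$, not $y_0^t.\nabla f^\pm(y)$; and the lower support factor should be $H^-=-f^-+\langle\nabla f^-,y\rangle$ (the paper's $\langle -f\rangle$), since that is the positive quantity $h_K$ returns at the downward normal --- without the sign flip your claimed identity $H_S^+=\tfrac12(H^++H^-)$ fails.
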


\begin{proof}
By continuity of $\bar S_v$ and $\PP $ with respect to the Hausdorff metric, we may assume $K$ has a $C^1$-smooth boundary and the origin in its interior. Then,
\[
K = \{u+\tau v \in \R^n \colon f(u) \leq \tau \leq g(u), u \in P_{v^{\perp}}K\},
\]
where $f, g \colon v^\perp \to \R$ are $C^1$ functions defined in the relative interior of $P_{v^{\perp}}K$. Then the (classical) Steiner symmetral in the direction $v$ has the form
\[S_v K = \{u+\tau v \in \R^n \colon -z(u) \leq \tau \leq z(u), u \in P_{v^{\perp}}K \},
\]
where $z = \frac 12(g - f)$.

	Set $\theta = x+v. t$ with $x \in V(v), t \in \M[1,m]$.
	Notice that for $u$ in the interior of $P_{v^\perp}K,$ the outer unit normal on $\partial K$ at the points $u+g(u) v$ and $u+f(u) v$ are given by
	\[n_K(u + g (u) v) = \frac{-\nabla g(u) + v}{|-\nabla g(u) + v|} \quad \text{ and } \quad n_K(u + f (u)v) = \frac{\nabla f(u)-v}{|\nabla f(u) - v|},\]
	respectively, where $\nabla g (u), \nabla f(u) \in v^\perp$.
	Applying a change of variables, we obtain 
	\begin{align*}
        &h_{\P K }(\theta)^p 
		= \int_{\s} h_Q(\xi^t.\theta )^p h_K(\xi)^{1-p}d\sigma_K(\xi)\\
		&= \int_{P_{v^{\perp}}K} h_Q(n_K(u+g(u)v)^t .\theta )^p h_K \left(n_K(u+g(u)v)\right)^{1-p}|-\nabla g(u) + v| du \\
		&+ \int_{P_{v^{\perp}}K} h_Q(n_K(u+f(u)v)^t.\theta )^p h_K\left(n_K(u+f(u)v)\right)^{1-p}|\nabla f(u) - v| du.
  \end{align*}
  As in \cite{LYZ00}, we will use the following notation: given a differentiable function $g$ defined on an open set $D \subset v^\perp$, we set 
\[
\langle g \rangle(u) = g(u) - \nabla g(u)^t.u, \quad u \in D,
\]
with the gradient $\nabla g(u)$ belonging to $v^\perp$. Since $\nabla g(u),u\in v^\perp$, we have
\begin{align*}
h_K\left(n_K(u+g(u) v)\right) & =\left(n_K(u+g(u) v)\right)^t (u+g(u) v) \\
& =\frac{(-\nabla g(u)+v)^t (u+g(u) v)}{|-\nabla g(u)+v|} \\
& =\frac{\langle g\rangle(u)}{|-\nabla g(u)+v|},
\end{align*}
(and analogously for $f$). From the $1$-homogeneity of the support function $h_Q$,
\begin{align*}
		h_{\P K }(\theta)^p
		&=\int_{P_{v^{\perp}}K} h_Q((-\nabla g(u) + v)^t. \theta)^p \langle g \rangle(u)^{1-p} du
  \\
		&+\int_{P_{v^{\perp}}K} h_Q((\nabla f(u) - v)^t. \theta)^p \langle - f \rangle(u)^{1-p}du.
\end{align*}
Using the definition of $\theta$, we then obtain
\begin{align*}
h_{\P K }(\theta)^p
		&=\int_{P_{v^{\perp}}K} h_Q( t-\nabla g(u)^t.x )^p \langle g \rangle(u)^{1-p} du
  \\
		&+ \int_{P_{v^{\perp}}K} h_Q( \nabla f(u)^t.x - t )^p \langle -f \rangle (u)^{1-p} du.
	\end{align*}
	Now assume $x+v.r\! \in\! \bar S_v \PP K$ with $x\in V(v), r \in \M[1,m]$. By definition, there are $t,s\! \in\! \M[1,m]$ with $h_{\P K }( x+v .t ), h_{\P K }(x+v. s )\! \leq\! 1$ and $r = \frac{t - s}2$. Making use of the above representation for the support function of $\P K $, 
\begin{align*}
h_{\P S_v K }(x+v .r)^p 
&= \int_{P_{v^{\perp}} K} h_Q(r - \nabla z(u)^t.x)^p [\langle z \rangle(u)]^{1-p}du\\
&+\int_{P_{v^{\perp}} K} h_Q( -\nabla z(u)^t.x -r)^p [\langle z \rangle(u)]^{1-p}du.
\end{align*}
Inserting the definition of $z$ yields
\begin{align*}
&h_{\P S_v K }(x+v. r)^p 
\\
&\!=\int_{P_{v^{\perp}} K} h_Q\left(\frac{1}{2}(t-s) - \frac{1}{2}( \nabla g(u)^t.x - \nabla f(u)^t.x) \right)^p\left[\frac{1}{2}\langle g-f \rangle(u) \right]^{1-p}du\\
&\!+\int_{P_{v^{\perp}} K} h_Q\left(-\frac{1}{2} \left(\nabla g(u)^t.x \!-\! \nabla f(u)^t.x\right)  - \frac{1}{2}(t-s) \right)^p\left[\frac{1}{2}\langle g-f \rangle(u) \right]^{1-p}\!du.
\end{align*}
Using the sub-linearity of $h_Q(\cdot)$, and that $(\alpha+\beta)^p(\gamma+\lambda)^{1-p} \leq \alpha^p\gamma^{1-p} + \beta^p\lambda^{1-p}$, for $\alpha,\gamma,\beta,\lambda >0$ \cite[Lemma 8]{LYZ00}, our computation continues as:
\begin{align*}
&h_{\P S_v K }(x+v. r)^p 
\\
&\leq \frac{1}{2}\int_{P_{v^\perp}K} (h_Q(t\!-\!\nabla g(u)^t.x) + h_Q(\nabla f(u)^t.x \!-\! s ))^p[(\langle g\rangle(u) + \langle\!-\! f \rangle(u))]^{1-p}du\\
&+\frac{1}{2} \int_{P_{v^\perp}K} (h_Q(s-\nabla g(u)^t.x) + h_Q(\nabla f(u)^t.x\!-\!t) )^p[\langle g \rangle(u) + \langle -f \rangle(u)]^{1-p}du.
\end{align*}
Using the sub-linearity property again yields
\begin{align*}
&h_{\P S_v K }(x+v. r)^p 
\\
&\leq \frac{1}{2} \int_{P_{v^\perp}K}\!\left( h_Q(t\!-\!\nabla g(u)^t.x)^p [\langle g \rangle(\!u\!)]^{1-p}+h_Q(\nabla f(u)^t.x\!-\!t)^p [\langle \!-\!f \rangle(\!u\!)]^{1-p}  \right)du\\
&+\frac{1}{2} \int_{P_{v^\perp}K}\!\left( h_Q(s\!-\!\nabla g(u)^t.x)^p [\langle g \rangle(\!u\!)]^{1-p}+h_Q(\nabla f(u)^t.x\!-\!s)^p [\langle \!-\!f \rangle(\!u\!)]^{1-p}  \right)du\\
&= \frac{1}{2}h_{\P K}(x + v.t) + \frac{1}{2} h_{\P K}(x+v.s) \leq 1,
\end{align*}
which means $x + v .r \in \PP S_v K$, completing the proof.
\end{proof}

Let $r \in \M[1,m]$ be a non-zero vector, and define $J_r:\M[n,1] \to \M[n,m]$ by $J_r(u) = u.r$. Below, $J_r^{-1}$ denotes the preimage of $J_r$.

\begin{lemma}
\label{l:permutationofvariables}
Let $Q\in\conbodo[1,m]$, then there exists a non-zero $r=r(Q) \in \M[1,m]$, depending only on $Q$, such that for every
$K\in\conbodio[n,1]$,  $p\geq 1$, and $v\in\s$,
  \[J_r^{-1} \bar S_v \PP K \subseteq S_v \pp[h_Q(r)^p, h_Q(-r)^p] K,\]
	where $\Pi_{\alpha_1,\alpha_2,p} K$ is the body given by \eqref{eq:HS_body}.
\end{lemma}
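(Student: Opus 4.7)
The strategy is threefold: (i) choose $r$ via the topological construction inside the proof of Proposition~\ref{p:one_dim_proj} so that $Q$ has two supporting points in directions $\pm r$ which are colinear with the origin; (ii) use this colinearity to produce a single scalar $c$, linear in $t^*$, handling both sign branches of $h_Q(\cdot\, r)$ at once; (iii) reduce the set inclusion to a pointwise inequality that propagates under the defining integral of $h_{\P K}$.

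Revisiting the proof of Proposition~\ref{p:one_dim_proj}, take $r \in \mathbb{S}^{m-1} \subseteq \M[1,m]$ to be the direction ``$\xi_0$'' produced there. The supporting points $q_0 := x_+(r)$ and $q_0' := x_-(r)$, characterised by $n_Q(q_0) = r$ and $n_Q(q_0') = -r$, are colinear with the origin, so $q_0' = -\mu q_0$ for some $\mu \geq 0$; hence $\langle q_0, r\rangle = h_Q(r)$ and $h_Q(-r) = \mu h_Q(r)$. A direct computation using $h_Q(sr) = s_+ h_Q(r) + s_- h_Q(-r)$ inside the defining integral of $h_{\P K}$ yields, for every $u \in \M[n,1]$,
\[
h_{\P K}(ur) = h_{\Pi_{h_Q(r)^p, h_Q(-r)^p, p} K}(u), \qquad J_r^{-1}(\PP K) = \pp[h_Q(r)^p, h_Q(-r)^p] K.
\]
Given $u \in J_r^{-1}(\bar S_v \PP K)$, decompose $u = u'_0 + \tau_0 v$ with $u'_0 \in v^\perp$, $\tau_0 = \langle u, v\rangle$. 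Unpacking the definition of $\bar S_v$ at $ur$ (the requirement $v^t y = o$ forces $y = u'_0 r$ and $t - s = 2\tau_0 r$) furnishes $t^* \in \M[1,m]$ with $h_{\P K}(ur + vt^*) \leq 1$ and $h_{\P K}(\bar u r + vt^*) \leq 1$, where $\bar u := u'_0 - \tau_0 v$.

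The crux is the pointwise inequality $h_Q((a+cb)r) \leq h_Q(ar + bt^*)$ for every $a, b \in \R$, with the single choice $c := \langle q_0, t^*\rangle / h_Q(r)$. When $a+cb \geq 0$,
\[
h_Q((a+cb)r) = (a+cb)h_Q(r) = a\langle q_0, r\rangle + b\langle q_0, t^*\rangle = \langle q_0, ar + bt^*\rangle \leq h_Q(ar + bt^*)
\]
because $q_0 \in Q$; when $a+cb < 0$, using $h_Q(-r) = \mu h_Q(r)$ and $q_0' = -\mu q_0 \in Q$,
\[
h_Q((a+cb)r) = -\mu(a+cb)h_Q(r) = \langle -\mu q_0, ar + bt^*\rangle = \langle q_0', ar + bt^*\rangle \leq h_Q(ar + bt^*).
\]
The colinearity $q_0' = -\mu q_0$ is precisely what allows one $c$ to control both signs. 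Substituting $a = w^t u$, $b = w^t v$ and integrating against $d\sigma_{K,p}(w)$ gives $h_{\P K}((u+cv)r) \leq h_{\P K}(ur + vt^*) \leq 1$, and since $c$ depends only on $t^*$, $r$, $q_0$, the same $c$ yields $h_{\P K}((\bar u + cv) r) \leq 1$. By the identity of the previous paragraph this means $u + cv,\; \bar u + cv \in \pp[h_Q(r)^p, h_Q(-r)^p] K$, i.e.\ $u \in S_v \pp[h_Q(r)^p, h_Q(-r)^p] K$.

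The main obstacle is locating the right $c$. The natural candidate $c = \phi(t^*)$, where $P(t) = \phi(t) q_0$ is the projection furnished by Proposition~\ref{p:one_dim_proj}, fails: that projection is generally non-orthogonal, so $P(Q) \subseteq Q$ yields $h_Q(P^*x) \leq h_Q(x)$ rather than the required $h_Q(Px) \leq h_Q(x)$. The escape is to forget the projection and use only the conclusion of Proposition~\ref{p:one_dim_proj}'s topological construction — the colinearity of $q_0$ and $q_0'$ with the origin — which is exactly what makes $(a+cb)h_Q(r) = \langle q_0, ar+bt^*\rangle$ on one branch pair with $\langle q_0', ar+bt^*\rangle$ on the other, both dominated by $h_Q(ar+bt^*)$ because $q_0, q_0' \in Q$.
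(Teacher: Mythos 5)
Your proof is correct, and the choice of scalar $c$, the $t^{*}$ you extract from $\bar S_v$, and the two sign branches all check out. It is, however, essentially the paper's argument repackaged rather than a genuinely different route. The paper selects $r$ and an auxiliary $l\in\M[1,m]$ so that $P=r^t.l$ is the projection from Proposition~\ref{p:one_dim_proj}; it then proves the body-level inclusion $(\PP K).l^t\subseteq\pp K$ via the covariance $\P[Q.B]K=(\P K).B$ and transports the $\bar S_v$-witness by right-multiplying by $l^t$, obtaining the scalar $\alpha=t.l^t$. Your proof does the same thing one layer down, at the level of the integrand $h_Q$: the supporting point $q_0=x_+(r)$ is exactly $h_Q(r)\,l$, so your $c=\langle q_0,t^{*}\rangle/h_Q(r)$ is precisely $t^{*}.l^t$, and your pointwise inequality $h_Q((a+cb)r)\le h_Q(ar+bt^{*})$ is the support-function form of $[q_0',q_0]=Q.P\subseteq Q$ evaluated at $ar+bt^{*}$. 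So the ``escape'' in your last paragraph is illusory: you have not forgotten the projection $P$, only its factorisation, and the worry about $h_Q(Px)\le h_Q(x)$ was never an obstacle for the paper, which applies $Q.P\subseteq Q$ through the covariance of $\P$ rather than pointwise to arguments of $h_Q$.

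Two small points you should tighten. First, the supporting points $x_\pm(r)$ exist as you describe only when $Q$ is smooth with positive curvature; for general $Q$, Proposition~\ref{p:one_dim_proj} delivers the projection $P$ by a limiting argument, not the boundary points. This is easily repaired: with $P=r^t.l$ and $r.l^t=1$, set $q_0:=h_Q(r)\,l$ and $q_0':=-h_Q(-r)\,l$; both lie in $Q.P\subseteq Q$, satisfy $\langle q_0,r\rangle=h_Q(r)$, $\langle q_0',r\rangle=-h_Q(-r)$, and are colinear through the origin, which is all your pointwise inequality uses. Second, the identity $J_r^{-1}(\PP K)=\pp K$ is valid for any non-zero $r$ (it needs only $h_Q(sr)^p=s_+^ph_Q(r)^p+s_-^ph_Q(-r)^p$ and the Firey-sum convention in \eqref{eq:HS_body}), and you should say so explicitly since it is an equality, not just the inclusion the paper records; it is the colinearity of $q_0,q_0'$ that the specific choice of $r$ buys you, nothing else.
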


\begin{proof}
For the given convex body $Q$, let $P=r^t.l \in \M[m,m]$ represent, by right-multiplication, the projection given by Proposition \ref{p:one_dim_proj}, where $r,l \in \M[1,m]$. Thus, $Q.P$ is a one-dimensional linear subspace such that $Q.P \subseteq Q$. Denote $\alpha_1 = h_Q(r)^p, \alpha_2 = h_Q(-r)^p$.
Using \eqref{eq:linearbehaviorProjRight}, one obtains
\begin{align*}
    \P K \supseteq \P[Q.P]K = (\P[Q.r^t] K).l = (\P[{[-h_Q(-r),h_Q(r)]}] K).l.
\end{align*}

Consequently,
  \begin{equation}
    \label{l:permutationofvariables:eq1}
	  (\PP K). l^t \subseteq \pp K.
  \end{equation}
  
  To prove the lemma, take $u+\tau v \in J_r^{-1}(\bar S_v \PP K)$ with $u \perp v \in \R^n, \tau \in \R$.
  By the definition of $\bar S_v$, we can write
	\begin{equation}
	  \label{l:permutationofvariables:eq2}
	(u+\tau v).r = x + v. \frac {t-s}2
     \end{equation}
	with $x\in\M[n,m], t,s \in \M[1,m], (x+t.v), (x+s.v) \in \PP K$.
	Multiplying both sides of \eqref{l:permutationofvariables:eq2} on the left by $v^t$, we find
	\begin{equation}
	  \label{l:permutationofvariables:eq3}
	 \tau r = \frac {t-s}2, \quad u .r = x.
	\end{equation}
	Take $\alpha = t.l^t, \beta = s.l^t \in \R$. Observe that, since $r.l^t=tr(P)=1$,
     \[(x + v. t).l^t = u+\alpha v,\quad (x + v. t).l^t = u+\beta v.\]
	This together with \eqref{l:permutationofvariables:eq1} implies $u+ \alpha v, u+ \beta v \in \pp K$.

	Taking the right-product with $l^t$ at \eqref{l:permutationofvariables:eq3}, we obtain
	\[\tau = \frac{t-s}2.l^t = \frac{\alpha-\beta}2.\]
	We obtain immediately that
	\[x+\tau v = x + \frac{\alpha-\beta}2 v ,\]
	  which implies $u+\tau v \in S_v \pp K$.
\end{proof}

We are now ready to prove Petty's projection inequality in our setting. For the convenience of the reader, we recall it.
\\
\vspace{1mm}
\noindent {\bf Theorem~\ref{t:PPIGeneral} (The $m$th-order $L^p$ Petty's projection inequality).} 
\hfill\break
{\it
	Fix $m \!\in\! \N$ and $p \geq 1$. Then, for any pair of convex bodies $K \in \conbodio[n,1]$ and $Q \in \conbodo[1,m]$, one has 
\begin{equation}
\Vol(\PP K ) \vol(K)^{\frac{nm} p -m} \leq \Vol(\PP \B ) \vol(\B)^{\frac{nm} p -m}.
\end{equation}
If $p=1$ then there is equality above  if and only if $K$ is an ellipsoid; if $p >1$, then there is equality above if and only if $K$ is an origin symmetric ellipsoid.}

\begin{proof}
	First we prove the inequality. Using the fact that the $m$th-order Steiner symmetrization does not decrease the volume of $\PP K $ via \eqref{eq:vol_not_dec} and Lemma~\ref{l:multidimSteiner}, we observe that, for any given $v \in \mathbb S ^{n-1}$,
	\begin{equation}
	  \label{eq:pettychainofinequalities}
   \begin{split}
	  \vol(K)^{\frac{nm} p -m}\Vol(\PP K ) &\leq \vol(S_v K)^{\frac{nm} p -m}\Vol(\bar S_v \PP K ) 
   \\
   &\leq \vol(S_v K)^{\frac{nm} p -m}\Vol(\PP S_v K ).
   \end{split}
	\end{equation}
 We choose a sequence of directions $\{v_j\}_j \subset \s$ such that $$S_j K \to \vol(\B)^{-1/n}\vol(K)^{1/n}\B$$ in the Hausdorff metric.
	By Proposition \ref{p:continuityofmultdimproj}, we obtain the inequality.

	Now assume there is equality in \eqref{eq:pettychainofinequalities}.
  By Lemma \ref{l:multidimSteiner} and the equality of volumes, $\bar S_v \PP K = \PP S_v K $. Let $\alpha_1, \alpha_2$ and $r$ be as given in Lemma \ref{l:permutationofvariables}. Recalling $J_r: \M[n, 1] \rightarrow \M$, one immediately has that, for every $G\in\conbodio[n,1]$, \begin{align*}
 x \in J_r^{-1}\left(\Pi_{Q, p}^{\circ} G\right) &\Leftrightarrow \quad  x. r \in \PP G  
 \\
 \Leftrightarrow h_{\P G}(x . r) \leq 1 &\Leftrightarrow h_{\left(\P G\right) . r}(x) \leq 1 \\
 \Leftrightarrow h_{\Pi_{Q . r^t}, p} G{(x)}\leq 1  &\Leftrightarrow x\in \Pi^\circ_{\alpha_1,\alpha_2,p} G\end{align*} from the definition of $r$. Applying this to $G=S_v K$ and using Lemma \ref{l:permutationofvariables}, we have
	\[\Pi^\circ_{\alpha_1, \alpha_2,p} S_v K = J_r^{-1}(\PP S_v K)= J_r^{-1}(\bar S_v \PP K) \subseteq S_v \Pi^\circ_{\alpha_1, \alpha_2,p} K\]
  and therefore,
 \[ \Pi^\circ_{\alpha_1, \alpha_2,p} S_v K = S_v \Pi^\circ_{\alpha_1, \alpha_2,p} K\]
  for every $v \in \s$.
  For $p>1$ this implies that $K$ is an extremal body for Proposition~\ref{p:HS} (which is \cite[Theorem 1]{HS09}), and thus an origin symmetric ellipsoid.
  For $p=1$ this implies that $K$ is an extremal body for the classical Petty projection inequality \eqref{e:petty_ineq}, and thus, an ellipsoid.
\end{proof}

Consider $Q\in\conbodio[1,m]$. Observe that, for every $x\in\S$,
$$h_{\P K}(x)^p \geq \min_{v \in \text{supp}(\sigma_{K,p})} h_{x.Q^t}(v)^p\sigma_{K,p}(\s).$$
Thus, if $K\in\conbodo[n,1]$ is so that $\sigma_{K,p}(\s)$ is infinite, then $\P K$ is all of $\R^{nm}$, and so $\PP K$ is $\{o_{n,m}\}$. Consequently, Theorem~\ref{t:PPIGeneral} actually holds for all $K\in\conbodo[n,1]$ in this instance. However, this observation shows us that $\Vol(\PP K)\vol(K)^{\frac{nm}{p}-m}$ will sometimes be zero. In particular, an $L^p$ analogue of the so-called Zhang's projection inequality \cite{Zhang91}, which showed that $\vol(K)^{n-1}\vol(\Pi^\circ K)$ is uniquely minimized by the class of simplices, is not possible. If one restricts to just symmetric convex bodies, then the question of Zhang's projection inequality has meaning. We note that even in the case $p=1$, $Q=[-1,1]$, the minimizer of $K\mapsto \vol(K)^{n-1}\vol(\Pi^\circ K)$ among symmetric convex bodies is still unknown.

We now obtain the $m$th-order $L^p$ Busemann-Petty centroid inequality, Theorem~\ref{t:NewLpBPC}, as a direct corollary of the $m$th-order $L^p$ Petty's projection inequality. We list it here again for the convenience of the reader.

\noindent {\bf Theorem~\ref{t:NewLpBPC} (The $m$th-order $L^p$ Busemann-Petty inequality).}
\hfill\break
{\it Fix $n$ and $m$ in $\N$. 
Let $L\subset \M[n,m]$ be a compact domain with positive volume, $Q \!\in\! \conbodo[1,m]$, and $p \geq 1$. Then
    \begin{equation}
     \frac{\vol(\G L)}{\Vol(L)^{1/m}} \geq \frac{\vol(\G \PP \B)}{\Vol(\PP \B)^{1/m}}.
\end{equation}
If $L$ is a star body or a compact domain with piecewise smooth boundary, then there is equality if and only if $L = \PP E$ up to a set of zero volume for some origin symmetric ellipsoid $E \in \conbodio[n,1]$.}

\begin{proof} First, we assume that $L \in \sta$. Applying Theorem~\ref{t:PPIGeneral} to the body $\G L,$ one has the bound
  $$\Vol(\PP \G L ) \vol(\G L )^{\frac{nm}{p}-m} \leq \vol(\B)^{\frac{nm}{p}-m}\Vol(\PP \B ).$$
  Combining this bound with Proposition~\ref{p:BP_P_rel}, one obtains \begin{equation}
    \vol(\B)^{\frac{nm}{p}-m}\Vol(\PP \B ) \geq \left(\frac m {nm+p}\right)^{\frac{nm}{p}} \left(\frac{\vol(\G L )}{\Vol(L)^{1/m}}\right)^{-m}.
  \end{equation} The equality conditions are inherited from Theorem~\ref{t:PPIGeneral} and Proposition \ref{p:BP_P_rel}.
  From the fact that Lemma~\ref{l:class} shows
  \begin{equation}
    \vol(\G \PP \B )=\left(\frac m {\vol(\B)(nm+p)} \right)^{\frac np }\vol(\B),
  \end{equation}
  the claim follows.

Assume now that $L \subset \M[n,m]$ is a compact domain with positive volume. For each fixed $y \in \S$, consider the set $L_{y} = \{\alpha \in [0,\infty) \colon \alpha y \in L\}$. Then, define the function $\ell(\alpha) = \frac{\alpha^{nm}}{nm}$ and let $M$ be the $n\times m$ dimensional star-shaped set with radial function 
\[
\rho_{M}(y) = \ell^{-1}(\vol[1](\ell(L_{y}))).
\]
Moreover, observe that $\Vol(M) = \Vol(L)$. Also, for any $v \in \M[n,1]$, 
\begin{align*}
\int_{L} h_Q(v^t.x)^pdx &= \int_{\S}\int_{L_{y}} h_Q(v^t.y)^p \alpha^p \alpha^{nm-1}d\alpha dy\\
&=\int_{\S}h_Q(v^t.y)^p  \int_{\ell(L_{y})} (nm \beta)^{\frac{p}{nm}}d\beta dy\\
&=(nm)^{\frac{p}{nm}} \int_{\S} h_Q(v^t.y)^p \int_{\ell(L_{y})} \beta^{\frac{p}{nm}}d\beta dy,
\end{align*}
where, in the second line, we applied the change of variables $\beta = \ell(\alpha)$ and $d\beta = \alpha^{nm-1} d\alpha$. 

On the other hand, for each $v \in \M[n,1]$, we additionally observe that 
\begin{align*}
\int_M h_Q(v^t.x)^p dx &= \int_{\S}\int_0^{\rho_M(y)} h_Q(v^t.y)^p \alpha^p \alpha^{nm-1} d\alpha dy\\
&= \int_{\S}h_Q(v^t.y)^p \int_0^{\ell(\rho_M(y))} (nm \beta)^{\frac{p}{nm}}d\beta dy\\
&= (nm)^{\frac{p}{nm}} \int_{\S}h_Q(v^t.y)^p \int_0^{\vol[1](\ell(L_{y})))}\beta^{\frac{p}{nm}}d\beta dy.
\end{align*}
Applying \cite[Theorem~1.14]{LiebAnalysis}, we see that 
\[
\int_{\ell(L_{y})} \beta^{\frac{p}{nm}} d\beta \geq \int_0^{\vol[1](\ell(L_{y}))} \beta^{\frac{p}{nm}}d\beta
\]
for each fixed $y \in \S$.

Therefore, we have shown that
\begin{equation}\label{eq:extension}
\int_L h_Q(v^t.x)^p dx \geq \int_M h_Q(v^t.x)^p dx \quad \text{for all } v \in \M[n,1],
\end{equation}
which implies that $\G L \supseteq \G M$. Since the proof of the inequality in the first part of the proof still holds for compact, star-shaped sets with integrable radial functions, we conclude 
\[
\frac{\vol(\G L)}{\Vol(L)^\frac 1m} \geq \frac{\vol(\G M)}{\Vol(M)^\frac 1m} \geq \frac{\vol(\G \PP \B )}{\Vol(\PP \B )^{1/m}}. 
\]

If $L$ is a compact domain and there is equality for $L$ in the above inequality, then there is equality in \eqref{eq:extension}, which means that $[0,\vol[1](\ell(L_{y}))] = \ell(L_{y})$ for almost every $y \in \S$. Suppose $L$ has a piecewise smooth boundary. Then, we deduce that $M$ is a star body and $M\!=\!L$ up to a set of zero volume. The equality classification follows from the first part of the proof. 
\end{proof}

\subsection{Extremal Cases and Santal\'o inequalities}
\label{sec:Sant}
In this section, we examine the behavior of the operators $\P$, $\PP$ and $\G$ as $p\to \infty$. We introduce the notation
$$\Pi_{Q,\infty} K:=\lim_{p\to\infty}\P K \quad \text{and} \quad \Pi^{\circ}_{Q,\infty} K:=\lim_{p\to\infty}\PP K,$$
where the limit is with respect to the Hausdorff metric. By sending $p\to \infty$ and using properties of $p$th averages, we establish the following formula for the support function of $\Pi_{Q,\infty} K$.
\begin{proposition}
\label{p:infsup}
    Let $m,n\in\N$. Then, for any $K\in\conbodio[n,1]$ and $Q\in\conbodo[1,m]$
    \begin{equation}
\label{eq:identity:operatornorms}
h_{\Pinf[Q]K}(x)=\max_{v \in \s} h_Q\left(\frac{v^t}{h_K(v)}.x\right) = \max_{v \in K^\circ} h_Q\left(v^t.x\right).
\end{equation}
\end{proposition}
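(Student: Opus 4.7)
The plan is to recognize the defining integral as an $L^p$-norm and pass to the $L^\infty$ limit. By positive $1$-homogeneity of $h_Q$, set $f(v) := h_Q(v^t.x)/h_K(v)$ on $\s$, and let $d\mu := h_K\,d\sigma_K$, a finite positive Borel measure on $\s$ with $\mu(\s)=n\vol(K)$ (by Minkowski's identity $n\vol(K)=\int h_K\,d\sigma_K$) and $\mathrm{supp}(\mu)=\mathrm{supp}(\sigma_K)$ since $h_K>0$. Definition~\ref{d:generalprojectionbody} then reads $h_{\P K}(x)=\|f\|_{L^p(\mu)}$.

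Standard $L^p\to L^\infty$ reasoning now applies. The upper bound $\|f\|_{L^p(\mu)} \le \bigl(\max_\s f\bigr)\mu(\s)^{1/p} \to M(x):=\max_{v\in\s}f(v)$ is immediate. For the lower bound, at any $v_0 \in \mathrm{supp}(\sigma_K)$, continuity of $f$ gives an open $U \ni v_0$ on which $f \ge f(v_0)-\varepsilon$ with $\mu(U)>0$, so $\|f\|_{L^p(\mu)} \ge (f(v_0)-\varepsilon)\mu(U)^{1/p} \to f(v_0)-\varepsilon$, yielding $\liminf_{p\to\infty} \|f\|_{L^p(\mu)} \ge \sup_{\mathrm{supp}(\sigma_K)} f$. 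To recover the second form of the limit, positive homogeneity gives $f(v)=h_Q((v/h_K(v))^t.x)$ and $v\mapsto v/h_K(v)$ is a homeomorphism $\s \to \partial K^\circ$; convexity of $u \mapsto h_Q(u^t.x)$ together with Bauer's maximum principle then shows $M(x) = \max_{u \in K^\circ} h_Q(u^t.x)$, attained at some extreme point $u_\ast$ of $K^\circ$.

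The main obstacle is to show that this extreme point has the form $u_\ast = v_\ast/h_K(v_\ast)$ with $v_\ast \in \mathrm{supp}(\sigma_K)$, so that the lower bound $\sup_{\mathrm{supp}(\sigma_K)} f$ actually matches the upper bound $M(x)$. I plan to handle this by polytopal approximation: for a polytope $K_j$ with facet outer normals $\{u_i\}$ and faces $\{F_i\}$, the surface area measure is $\sigma_{K_j}=\sum \vol[n-1](F_i)\delta_{u_i}$, so $\mathrm{supp}(\sigma_{K_j})=\{u_i\}$, while the vertices (= extreme points) of $K_j^\circ$ are exactly $\{u_i/h_{K_j}(u_i)\}$; the identity then reduces to the direct $L^p$ asymptotics of a finite sum. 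Approximating $K$ by polytopes $K_j \to K$ in the Hausdorff metric, Proposition~\ref{p:continuityofmultdimproj} together with continuity of the polar operation on $\conbodio[n,1]$ allows passing to the limit in both sides. Uniform convergence in $x \in \S$ --- hence Hausdorff convergence $\P K \to \Pinf K$ --- finally follows from equicontinuity of the family $\{h_{\P K}\}_{p\ge 1}$, which is immediate from sublinearity of support functions combined with the uniform upper bound above, via Arzel\`a--Ascoli.
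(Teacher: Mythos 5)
Your argument is essentially the paper's: both recognize Definition~\ref{d:generalprojectionbody} as an $L^p(\mu)$ norm with $d\mu = h_K\,d\sigma_K$, take the standard $L^p \to L^\infty$ limit to arrive at $\sup_{\mathrm{supp}(\sigma_K)} f$, and then avoid directly proving that this supremum equals $\max_{K^\circ} h_Q(u^t.x)$ by approximating $K$ with a class of bodies for which the identification is immediate. The difference is in that class: you use polytopes (where $\mathrm{supp}(\sigma_{K_j})$ is exactly the set of facet normals, which maps bijectively onto the vertices of $K_j^\circ$, and the $L^p$ asymptotics of a finite sum settles the question), while the paper uses $C^2$ bodies with positive Gauss curvature (where $\mathrm{supp}(\sigma_{K_j}) = \s$ outright and no duality bookkeeping is required). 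Your use of Bauer's maximum principle to pass from $\max_\s f$ to $\max_{K^\circ} h_Q(u^t.x)$ makes explicit something the paper leaves implicit, which is a clarity gain. One caution: the final limiting step --- replacing $K$ by the approximants $K_j$ and sending both $j \to \infty$ and $p \to \infty$ --- is genuinely a double limit, and your appeal to Arzel\`a--Ascoli establishes only that for a \emph{fixed} body the convergence $h_{\Pi_{Q,p}K} \to h_{\Pi_{Q,\infty}K}$ is uniform in $x$; it does not by itself justify the interchange with $j \to \infty$. The paper's appeal to dominated convergence at the same point is equally terse, so this is not a defect peculiar to your write-up, but if you want a fully airtight argument here you would either need some uniformity in $p$ of the convergence $h_{\Pi_{Q,p}K_j} \to h_{\Pi_{Q,p}K}$, or, more cleanly, a direct proof that $\mathrm{conv}\{v/h_K(v): v \in \mathrm{supp}(\sigma_K)\} = K^\circ$; the latter follows quickly from $\int_{\s}(h_L - h_K)\,d\sigma_K = 0$ for $L = \bigcap_{v \in \mathrm{supp}(\sigma_K)}\{\langle \cdot, v\rangle \le h_K(v)\}$ together with Minkowski's first inequality, which forces $L = K$.
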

\begin{proof}
    Begin by observing that
\[h_{\P K }(x) =[n\vol(K)]^\frac 1p \left(\frac 1 {n\vol(K)}\int_{\s} \left(\frac{h_Q(v^t.x)}{h_K(v)}\right)^p h_K(v) d\sigma_{K}(v) \right)^\frac 1p \]
converges pointwise to  $h_{\Pinf[Q]K}(x)=\max_{v \in \text{supp}(\sigma_K)} h_Q\left(\frac{v^t}{h_K(v)}.x\right)$ as $p\to\infty$. One can show using elementary properties of convex sets that the maximum over $\text{supp}(\sigma_K)$ coincides with the maximum over $\s$. However, we present a simpler argument via approximation. We begin by approximating $K$ with a sequence of convex bodies $\{K_j\}\subset\conbodio[n,1]$ that have $C^2$ smooth boundary with positive Gauss curvature. In this case, $\text{supp}(\sigma_{K_j})=\s$ for all $j$. Next, observe that, for every $j$, 
\begin{align*}h_{\P K_j }(x) =&[n\vol(K_j)]^\frac 1p 
\\
&\times\left(\frac 1 {n\vol(K_j)}\int_{\s} \left(\frac{h_Q(v^t.x)}{h_{K_j}(v)}\right)^p h_{K_j}(v) d\sigma_{K_j}(v) \right)^\frac 1p \end{align*}
converges pointwise to $\max_{v \in \s} h_Q\left(\frac{v^t}{h_{K_j}(v)}.x\right)$ as $p\to\infty$.
Since we may find $r_1,r_2>0$ such that $r_1 B_2^{n m} \subseteq \P K \subseteq r_2 B_2^{n m}$ for every $p\geq 1$, we may apply dominated convergence and use the continuity of the functions $t\mapsto t^p$ and $t\mapsto t^{1/p}$ for $p\geq 1$ to send $j\to\infty$ and the claim follows.

\end{proof}

The following isoperimetric inequality for $\Pi^\circ_{Q,\infty} K$ is immediate from Theorem~\ref{t:PPIGeneral} and Proposition~\ref{p:infsup} by sending $p\to\infty$. 

\begin{corollary}
\label{cor:PPInfty}
Let $n,m \in \N$. Then, for any $K \in \conbodio[n,1]$ and $Q \in \conbodo[1,m]$,
\[\Vol(\Pi^{\circ}_{Q,\infty} K) \vol(K)^{-m} \leq \Vol(\Pi_{Q,\infty}^\circ \B) \vol(\B)^{-m},\]
with equality if $K$ is an origin symmetric ellipsoid.
\end{corollary}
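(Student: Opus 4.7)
The plan is to pass to the limit $p \to \infty$ in the $m$th-order $L^p$ Petty projection inequality, Theorem~\ref{t:PPIGeneral}, which for each finite $p \geq 1$ reads
\[\Vol(\PP K) \vol(K)^{\frac{nm}{p}-m} \leq \Vol(\PP \B) \vol(\B)^{\frac{nm}{p}-m}.\]
The scalar factor $\vol(K)^{nm/p-m}$ tends to $\vol(K)^{-m}$ as $p \to \infty$, and similarly for $\B$, so the main task is to show that the volumes of the polar projection bodies converge to the corresponding volumes in the limit.

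By definition, $\Pi_{Q,\infty} K = \lim_{p\to\infty} \P K$ in the Hausdorff metric, and Proposition~\ref{p:infsup} provides the explicit support function of the limit. The uniform bounds $r_1 B_2^{nm} \subseteq \P K \subseteq r_2 B_2^{nm}$ valid for all $p \geq 1$ (as noted in the proof of Proposition~\ref{p:infsup}) will imply that the limit body $\Pi_{Q,\infty} K$ also contains the origin in its interior; hence polarity is Hausdorff-continuous at this body, and so $\PP K \to \Pi^{\circ}_{Q,\infty} K$ in the Hausdorff metric. Since volume is continuous with respect to the Hausdorff metric, I obtain $\Vol(\PP K) \to \Vol(\Pi^{\circ}_{Q,\infty} K)$, and analogously for $\B$. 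Passing to the limit in the displayed inequality yields the desired estimate.

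For the equality assertion, when $K = E = T.\B$ is an origin symmetric ellipsoid with $T \in GL_n(\R)$, I would use Proposition~\ref{p:linearbehaviorProj} to write $\PP E = |\det T|^{-1/p}\, T. \PP \B$, and then a direct volume calculation (using $\vol(E) = |\det T|\vol(\B)$ together with the scaling $\Vol(T.D) = |\det T|^m \Vol(D)$) gives $\Vol(\PP E)\vol(E)^{nm/p-m} = \Vol(\PP \B)\vol(\B)^{nm/p-m}$ for every finite $p \geq 1$; taking $p \to \infty$ preserves this equality, giving equality in the corollary when $K$ is an origin symmetric ellipsoid. The only mild technical point will be confirming Hausdorff continuity of polarity at $\Pi_{Q,\infty} K$, but the uniform interior-ball bound from Proposition~\ref{p:infsup} makes this routine.
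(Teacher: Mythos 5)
Your proposal is correct and takes essentially the same approach as the paper: passing to the limit $p\to\infty$ in Theorem~\ref{t:PPIGeneral}, using Proposition~\ref{p:infsup} and the uniform inner/outer radius bounds from its proof to justify Hausdorff convergence and hence convergence of volumes. The paper states the result as ``immediate from Theorem~\ref{t:PPIGeneral} and Proposition~\ref{p:infsup} by sending $p\to\infty$,'' and your write-up correctly supplies the relevant details (note also that the paper directly defines $\Pi^\circ_{Q,\infty}K := \lim_{p\to\infty}\Pi^\circ_{Q,p}K$, so the continuity-of-polarity step you flag is a consistency check rather than a necessity).
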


  We next show that Corollary~\ref{cor:PPInfty} implies Theorem~\ref{t:operatornorms}. Recall that $B_{E,F}$ is the unit ball of the norm $\|x\|_{E,F} = \max_{x \in E} \|x.v\|_F$. 

\vspace{1mm}
\noindent {\bf Theorem~\ref{t:operatornorms}.}$\,$ {\it Let $n,m \in \N$. Then, for any pair of convex bodies $E \in \conbodio[n, 1]$ and $F \in \conbodio[1, m]$, 
\[\Vol(B_{E,F}) \vol[m](F)^{-n} \leq \Vol(B_{E, \B[m]}) \vol[m](\B[m])^{-n},\]
with equality if $F$ is an origin symmetric ellipsoid. Moreover, if $E$ has center of mass at the origin, then
\[\Vol(B_{E,F}) \vol(E)^{m} \leq \Vol(B_{\B, F}) \vol(\B)^{m},\] 
with equality if and only if $E$ is an origin symmetric ellipsoid.
}
\begin{proof}
We will interchange the roles of $K\in \conbodio[n,1]$ and $Q\in \conbodio[1,m]$ in Corollary~\ref{cor:PPInfty}. Take $K \subseteq \M[m,1]$ to be the set $F \subseteq \R^m$, identified as a set of column vectors, and take $Q \subseteq \M[1,n]$ for which 
$Q^t \subseteq \M[n,1]$ is the set $E \subseteq \R^n$, identified as a set of column vectors.
Recall the shape of the sets, $B_{E,F} \subseteq \M[m,n]$ and $\Pinf[Q]K \subseteq \M[m,n]$. Let $x \in \M[m,n]$. By \eqref{eq:identity:operatornorms},
\begin{align}
    h_{\Pinf[Q]K}(x)
    &= \max_{v\in F^\circ} h_{E^t}(v^t.x) = \max_{v\in F^\circ} \max_{l\in E^t} v^t.x.l^t \\
    &= \max_{l\in E^t} \max_{v\in F^\circ} v^t.x.l^t = \max_{l\in E} \|x.l\|_F \\
    &= \|x\|_{E,F},
\end{align}
and thus the first part of the theorem follows directly from Proposition \ref{cor:PPInfty}. Clearly, equality holds if $F$ is an origin symmetric ellipsoid.

For the second part, take $K \subseteq \M[n,1]$ to be the set $E^\circ \subseteq \R^n$, identified as a set of column vectors, and take $Q \subseteq \M[1,m]$ for which 
$Q^t \subseteq \M[m,1]$ is the set $F^\circ \subseteq \R^m$, identified as a set of column vectors.
Again, the shape of the sets are $B_{E,F} \subseteq \M[m,n]$ and $\Pinf[Q]K \subseteq \M$. Let $x \in \M[m,n]$, then we have $K^\circ=(E^\circ)^\circ=E$ and
\begin{align}
    h_{\Pinf[Q]K}(x^t)
    &= \max_{v\in E} h_{(F^\circ)^t}(v^t.x^t) = \max_{v\in E} h_{F}(x.v) = \|x\|_{E,F}.
\end{align}
By Corollary~\ref{cor:PPInfty}, and since taking transpose preserves volume in $\M$, we deduce
\[\Vol(B_{E,F}) \vol(E^\circ)^{-m} \leq \vol(B_{\B,F}) \vol(\B)^{-m}.\]
Since $E$ has center of mass at the origin, we may apply the Santal\'o inequality (see \eqref{eq:BS} below) to obtain
\[\vol(E^\circ)^{-m} \geq \vol(E)^m \vol(\B)^{-2m},\]
and the second part of the theorem follows; the characterization of equality also follows from \eqref{eq:BS} below.
\end{proof}

We now return to the Busemann-Petty centroid inequality and analyze $\G L $ as $p\to\infty$. We recall that
$$h_{\G L }(\xi)= \left(\frac 1 {\Vol(L)}\int_L h_Q(\xi^t.x)^p dx \right)^{\frac 1p }.$$
We introduce the following notation: 
$$\Gamma_{Q,\infty} L=\lim_{p\to \infty}\Gamma_{Q,p} L,$$
where the limit is with respect to the Hausdorff metric. By sending $p\to\infty,$ we have, for a compact domain $L\subset\M$ with positive volume, that $\Gamma_{Q,\infty} L$ is a convex body in $\M[n,1]$ defined via the support function
\begin{equation}
\label{eq:higher_order_L}
h_{\Gamma_{Q,\infty} L}(\xi)=\max_{x\in L}h_Q(\xi^t.x). \end{equation}
With this definition in mind, we immediately obtain the following from Theorem~\ref{t:NewLpBPC}.
\begin{corollary}[Busemann-Petty inequality, $p=\infty$]
\hfill\break
\label{cor:LIBPC}
	Let $L \subset \M$ be a compact domain with positive volume and $Q \in \conbodo[1,m]$. Then
\begin{equation}
 \vol(\Gamma_{Q,\infty} L) \geq \left(\frac{\Vol(L)}{\Vol(\Pi_{Q,\infty}^\circ \B )}\right)^{1/m}\vol(\B).  
\end{equation}
\end{corollary}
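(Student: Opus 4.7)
The plan is to deduce Corollary~\ref{cor:LIBPC} directly from Theorem~\ref{t:NewLpBPC} by sending $p \to \infty$. Fix a compact domain $L \subset \M$ with positive volume; for each finite $p \geq 1$, Theorem~\ref{t:NewLpBPC} yields
$$\vol(\G L) \;\geq\; \Vol(L)^{1/m} \cdot \frac{\vol(\G \PP \B)}{\Vol(\PP \B)^{1/m}}.$$
The advantage of this formulation is that, by Lemma~\ref{l:class} applied to the origin symmetric ellipsoid $E=\B$, the composition $\G \PP \B$ collapses to the explicit dilate $C_{n,m,p}\B$, so that $\vol(\G \PP \B) = C_{n,m,p}^{n} \vol(\B)$. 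Using the closed form of $C_{n,m,p}$ in \eqref{eq:elipp_cal}, a direct calculation gives
$$\lim_{p\to\infty} C_{n,m,p}^{n} \;=\; \lim_{p\to\infty} \left(\frac{m}{\vol(\B)(nm+p)}\right)^{n/p} \;=\; 1,$$
since the exponent $n/p$ tends to zero while the base decays only polynomially in $p$.

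Next I would establish the Hausdorff convergence, as $p \to \infty$, of the two $p$-dependent bodies appearing in the above inequality. For $\G L \to \Gamma_{Q,\infty} L$, the definition
$$h_{\G L}(\xi) \;=\; \left(\frac{1}{\Vol(L)} \int_L h_Q(\xi^t.x)^p\, dx\right)^{1/p}$$
is, for fixed $\xi \in \s$, the $L^p$ average of a bounded continuous function on the probability space $(L, dx/\Vol(L))$, so it converges pointwise to $\max_{x\in L} h_Q(\xi^t.x) = h_{\Gamma_{Q,\infty} L}(\xi)$ by classical $L^p \to L^\infty$ convergence. Since the family $\{\xi \mapsto h_Q(\xi^t.x)\}_{x \in L}$ is uniformly Lipschitz on the compact set $\s$ (as $L$ is bounded and $h_Q$ is Lipschitz), this pointwise limit is actually uniform, delivering Hausdorff convergence. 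The analogous statement $\PP \B \to \Pi_{Q,\infty}^{\circ} \B$ is precisely the content of Proposition~\ref{p:infsup}, which provides the corresponding uniform convergence of support functions, and hence $\Vol(\PP \B) \to \Vol(\Pi_{Q,\infty}^{\circ} \B)$ by continuity of volume.

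Combining these convergences and passing to the limit in the displayed inequality gives
$$\vol(\Gamma_{Q,\infty} L) \;\geq\; \Vol(L)^{1/m} \cdot \frac{\vol(\B)}{\Vol(\Pi_{Q,\infty}^{\circ} \B)^{1/m}} \;=\; \left(\frac{\Vol(L)}{\Vol(\Pi_{Q,\infty}^{\circ} \B)}\right)^{1/m} \vol(\B),$$
which is the statement of the corollary. There is no substantive obstacle: Lemma~\ref{l:class} removes the difficulty of tracking a composition of two $p$-dependent operators by reducing $\G \PP \B$ to an explicit ball, the elementary limit $C_{n,m,p}^n \to 1$ handles the $p$-dependent normalizing constant, and the Hausdorff continuity of $\G L$ and $\PP \B$ in $p$ follows routinely from the $L^p \to L^\infty$ convergence of integral means.
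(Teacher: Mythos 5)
Your proposal is correct and takes essentially the same route as the paper, which simply declares the corollary ``immediate'' from Theorem~\ref{t:NewLpBPC} after introducing $\Gamma_{Q,\infty}$ and notes in the surrounding text that Lemma~\ref{l:class} gives $\G\PP\B\to\B$ as $p\to\infty$. You have supplied the limit computation $C_{n,m,p}^n\to 1$ and the Hausdorff-convergence arguments for $\G L$ and $\PP\B$ that the paper leaves implicit; these are accurate (the one point worth being explicit about is that, because $L$ is a compact domain with positive volume, the essential supremum appearing in the $L^p\to L^\infty$ limit coincides with $\max_{x\in L}h_Q(\xi^t.x)$).
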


We recall that the Santal\'o point of $K\in\conbod[d]$ is given by $$s(K)=\text{argmin}_{z\in\text{int}(K)} \vol[d]((K-z)^\circ).$$ A convex body $K$ is said to be in Santal\'o position if $s(K)=o$. Also, we denote the center of mass of $K$ as $c(K):=\vol[d](K)^{-1}\int_K x dx$, and $K$ is said to be centered if $c(K)=o$. The Santal\'o inequality can then be stated as \cite{MP90,CMP85,SR81,SLA49,Sh3} for $K\in\conbod[d]$,
\begin{equation}\vol[d](K)\vol[d]((K-s(K))^\circ)\leq\vol[d](K)\vol[d]((K-c(K))^\circ) \leq \vol[d](B_2^d)^2,
\label{eq:BS}
\end{equation}
with equality throughout if and only if $K$ is an origin symmetric ellipsoid. In fact, in \eqref{eq:BS}, $s(K)$ and $c(K)$ can be replaced by any $z$ in the so-called \textit{Santal\'o region} of $K$, as studied by Meyer and Werner \cite{MW98}. The statements ``in Santal\'o position" and ``centered" below can also be replaced by the more general assumption that the Santal\'o region contains the origin.

Fix $p\geq 1$ and $Q\in\conbodo[1,m]$. For a compact domain $L\subset\M$ with positive volume, a special case of \eqref{eq:BS} is
\begin{equation}
\begin{split}
\vol(\G L )&\vol((\G L-s(\G L))^\circ)
\\
&\leq \vol(\G L )\vol((\G L-c(\G L))^\circ) 
\\
&\leq \vol(\B)^2,
\end{split}
\label{eq:BS_1}
\end{equation}
with equality throughout if and only if $\G L $ is an origin symmetric ellipsoid $E\in\conbodio[n,1]$. We remark here that one can readily verify (say by using \cite[10.22]{Sh1}) that, if $Q\in\conbodio[1,m]$ is symmetric, then $s(\G L)=o$.


Combining \eqref{eq:BS_1} with Theorem~\ref{t:NewLpBPC} yields the following; this extends results by Lutwak and Zhang \cite{LZ97} and Haberl and Schuster \cite{HS09}. It will be convenient to write $\G^\circ L=(\G L)^\circ$.

\begin{lemma}[The $m$th-order $L^p$ Santal\'o inequality]
\hfill\break
\label{l:hipBS}
  Fix $p\geq 1$ and $Q\in\conbodo[1,m].$ Consider a compact domain $L\subset\M$ with positive volume. Then, if $\G L$ is in Santal\'o position or is centered,
  \begin{equation}
  \begin{split}\Vol(L)^{\frac 1 m }\vol(\G^\circ L)\leq \vol(\B)^2 \frac{\Vol(\PP \B )^{\frac 1 m }}{\vol(\G \PP \B )},
  \end{split}
\label{eq:BS_2}
\end{equation}
with equality throughout if and only if $L = \PP E $ for some origin symmetric ellipsoid $E \in \conbodo$.
\end{lemma}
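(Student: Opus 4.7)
The plan is to combine two inequalities that have already been established: the classical Blaschke–Santal\'o inequality \eqref{eq:BS_1} applied to the convex body $\G L\in\conbodio[n,1]$, and the $m$th-order $L^p$ Busemann–Petty centroid inequality (Theorem~\ref{t:NewLpBPC}). Since $\G L$ is assumed to be in Santal\'o position or centered, \eqref{eq:BS_1} specializes to
\[
\vol(\G L)\vol(\G^\circ L) \leq \vol(\B)^2,
\]
with equality if and only if $\G L$ is an origin symmetric ellipsoid. On the other hand, Theorem~\ref{t:NewLpBPC} gives
\[
\frac{\vol(\G L)}{\Vol(L)^{1/m}} \geq \frac{\vol(\G \PP \B)}{\Vol(\PP \B)^{1/m}}.
\]

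Multiplying the first display by $\Vol(L)^{1/m}/\vol(\G L)$ and substituting the lower bound on $\vol(\G L)$ from the second display yields
\[
\Vol(L)^{1/m}\vol(\G^\circ L)
\leq \vol(\B)^2\cdot \frac{\Vol(L)^{1/m}}{\vol(\G L)}
\leq \vol(\B)^2\cdot \frac{\Vol(\PP \B)^{1/m}}{\vol(\G \PP \B)},
\]
which is precisely \eqref{eq:BS_2}.

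For the equality case, suppose equality holds in \eqref{eq:BS_2}. Then equality must hold in both of the inequalities above. Equality in Theorem~\ref{t:NewLpBPC} forces $L = \PP E$ (up to a set of zero volume) for some origin symmetric ellipsoid $E \in \conbodio[n,1]$. Equality in the Santal\'o step forces $\G L$ to be an origin symmetric ellipsoid; with $L=\PP E$, Lemma~\ref{l:class} gives
\[
\G \PP E = \vol(\B)^{1/p}\vol(E)^{-1/p} C_{n,m,p}\, E,
\]
so $\G L$ is automatically an origin symmetric ellipsoid, and the two equality conditions are compatible. Conversely, if $L = \PP E$ for an origin symmetric ellipsoid $E$, then Theorem~\ref{t:NewLpBPC} holds with equality (take $K=E$ in Theorem~\ref{t:PPIGeneral} together with Proposition~\ref{p:BP_P_rel}; equivalently, apply the affine invariance in Proposition~\ref{p:affine_invar_Zhang} to reduce to $E=\B$), and $\G L$ is an origin symmetric ellipsoid by Lemma~\ref{l:class}, hence equality also holds in \eqref{eq:BS_1}.

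The only potential subtlety I anticipate is the equality case: one must check that the ``Santal\'o position or centered'' hypothesis is genuinely fulfilled by $\G L = \G \PP E$ in the equality scenario (so that the Santal\'o step is tight, not merely an inequality). This is immediate since $\G \PP E$ is an origin symmetric ellipsoid by Lemma~\ref{l:class}, so both its Santal\'o point and its centroid coincide with the origin, ensuring \eqref{eq:BS_1} is saturated.
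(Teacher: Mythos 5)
Your proposal is correct and follows exactly the route the paper intends: the paper's own ``proof'' is the single remark that Lemma~\ref{l:hipBS} follows by combining \eqref{eq:BS_1} (Santal\'o applied to $\G L$) with Theorem~\ref{t:NewLpBPC}, and you spell out that combination carefully, including the verification via Lemma~\ref{l:class} that both inequalities are saturated precisely when $L=\PP E$. The only caveat, which is inherited from the paper's own statement rather than a gap in your argument, is that the ``only if'' direction invokes the equality characterization of Theorem~\ref{t:NewLpBPC}, which is stated there only for $L$ a star body or a compact domain with piecewise smooth boundary.
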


For a compact domain $L\subset \M$ with positive volume, we define its polar with respect to $Q\in \conbodo[1,m]$ as
$$\Gamma^\circ_{Q,\infty} L:=\lim_{p\to\infty}(\G L )^\circ=(\Gamma_{Q,\infty} L)^\circ,$$
which converges with respect to the Hausdorff metric. We remark by \eqref{eq:higher_order_L} that, if $\xi\in \Gamma^\circ_{Q,\infty} L$, then $\max_{x\in L}h_Q(\xi^t.x) \leq 1$. We can view the map $L\mapsto \Gamma^\circ_{Q,\infty} L$ as a type of duality; it follows from the definition that it is order-reversing. Restricting ourselves to $Q=[-1,1]$ and $L\in\conbodio[n,1]$, the case investigated by Lutwak, Yang and Zhang \cite{LZ97}, one obtains $\Gamma_{[-1,1],\infty} L=\text{conv}(L\cup (-L))$ and $\Gamma^\circ_{[-1,1],\infty} L=L^\circ\cap (-L^\circ)$. Thus, if $L\in\conbodio[n,1]$ is symmetric, one obtains $\Gamma_{[-1,1],\infty} L=L$ and $\Gamma^\circ_{[-1,1],\infty} L=L^\circ$. Next, consider when $Q=[0,1]$ and $L\in\conbodio[n,1]$. Then, for every $\xi\in\s$ one has
$$h_{\Gamma_{[0,1],\infty} L}(\xi)=\max_{x\in L}\langle \xi,x \rangle_+=h_L(\xi).$$
Consequently, for $L\in\conbodio[n,1]$, $\Gamma_{[0,1],\infty} L = L$. This example shows that the classical Santal\'o inequality can be considered a special case of Theorem~\ref{t:hiBS} below, which follows immediately from Lemma~\ref{l:hipBS} by taking limits. We note that Lemma~\ref{l:class} shows, as $p\to\infty$, $$\G \PP \B \to\B$$ in the Hausdorff metric for every $Q\in\conbodo[1,m]$.

\begin{theorem}[The $m$th-order Santal\'o inequality]
\hfill\break
\label{t:hiBS}
  Fix $Q\in\conbodo[1,m].$ Consider a compact domain $L\subset\M$ with positive volume. Then, if $\Gamma_{Q,\infty} L$ is in Santal\'o position or is centered,
  \begin{equation}
  \Vol(L)^{\frac 1 m }\vol(\Gamma^\circ_{Q,\infty} L) \leq \vol(\B) \Vol(\Pi_{Q,\infty}^\circ \B)^{\frac 1 m }.
\label{eq:BS_3}
\end{equation}
\end{theorem}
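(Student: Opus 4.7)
The plan is to derive \eqref{eq:BS_3} directly, in parallel with the proof of Lemma~\ref{l:hipBS}, by combining the extremal Busemann-Petty centroid inequality at $p=\infty$ (Corollary~\ref{cor:LIBPC}) with the classical Blaschke-Santal\'o inequality \eqref{eq:BS}. Although the excerpt suggests taking limits in Lemma~\ref{l:hipBS}, I expect this literal passage to the limit to be slightly subtle: Lemma~\ref{l:hipBS} requires $\G L$ to be in Santal\'o position or centered for each finite $p$, which is not obviously inherited from the corresponding hypothesis on $\Gamma_{Q,\infty} L$, and translating $L$ does not translate $\G L$ in a simple way. The direct approach bypasses this entirely.

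First I would verify that $\Gamma_{Q,\infty} L \in \conbodio[n,1]$, so that $\Gamma^\circ_{Q,\infty} L$ is a convex body of positive finite volume. Boundedness follows because $L$ is compact and $h_Q$ is continuous, so the support function $h_{\Gamma_{Q,\infty} L}(\xi) = \max_{x \in L} h_Q(\xi^t.x)$ is continuous and finite on $\s$. To show $o$ lies in the interior, I would fix $\xi \in \s$ and pick $q \in \operatorname{int}(Q)$; since $L$ has non-empty interior, I can find $x$ in a small ball inside $L$ with $\xi^t.x.q^t > 0$, forcing $h_{\Gamma_{Q,\infty} L}(\xi) > 0$ for every $\xi \in \s$. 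Under the hypothesis, we then have $s(\Gamma_{Q,\infty} L) = o$ or $c(\Gamma_{Q,\infty} L) = o$.

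With these preliminaries, Corollary~\ref{cor:LIBPC} supplies the lower bound
\[
\vol(\Gamma_{Q,\infty} L) \geq \left(\frac{\Vol(L)}{\Vol(\Pi_{Q,\infty}^\circ \B)}\right)^{1/m} \vol(\B),
\]
while the Blaschke-Santal\'o inequality \eqref{eq:BS} applied to $\Gamma_{Q,\infty} L$ yields
\[
\vol(\Gamma_{Q,\infty} L)\, \vol(\Gamma^\circ_{Q,\infty} L) \leq \vol(\B)^2.
\]
Dividing the second inequality by the first and rearranging produces
\[
\Vol(L)^{1/m}\, \vol(\Gamma^\circ_{Q,\infty} L) \leq \vol(\B)\, \Vol(\Pi_{Q,\infty}^\circ \B)^{1/m},
\]
which is exactly \eqref{eq:BS_3}. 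The only non-trivial ingredient is the verification that $\Gamma_{Q,\infty} L$ belongs to $\conbodio[n,1]$, which, as sketched above, is a routine consequence of $L$ and $Q$ each having non-empty interior; after that the inequality is obtained by concatenating two standard inputs.
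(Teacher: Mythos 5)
Your derivation is correct and yields \eqref{eq:BS_3}: applying Corollary~\ref{cor:LIBPC} gives a lower bound on $\vol(\Gamma_{Q,\infty}L)$, and the Blaschke--Santal\'o inequality \eqref{eq:BS} applied to $\Gamma_{Q,\infty}L$ gives $\vol(\Gamma_{Q,\infty}L)\vol(\Gamma^\circ_{Q,\infty}L)\leq\vol(\B)^2$; dividing produces the claim. This is the same logical content as the paper's route, which reads Theorem~\ref{t:hiBS} off from Lemma~\ref{l:hipBS} as $p\to\infty$ (together with $\G\PP\B\to\B$ from Lemma~\ref{l:class}); you simply re-run the finite-$p$ argument directly at the limit. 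Your concern that the hypothesis ``$\G L$ in Santal\'o position or centered'' is not literally inherited from the corresponding hypothesis on $\Gamma_{Q,\infty}L$ is a fair reading of the text, and your direct route sidesteps it. One should note, though, that the limiting route can also be repaired: Lemma~\ref{l:hipBS} is really the statement that $\Vol(L)^{1/m}\vol\bigl((\G L - s(\G L))^\circ\bigr)$ is bounded by the right side for arbitrary $L$, and since the Santal\'o point is Hausdorff-continuous and $\G L\to\Gamma_{Q,\infty}L$, the hypothesis on $\Gamma_{Q,\infty}L$ forces $s(\G L)\to o$, giving the desired limit.

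There is a small flaw in your preliminary check that $h_{\Gamma_{Q,\infty}L}>0$ on $\s$: for fixed $\xi\in\s$ and $q\in\operatorname{int}(Q)$, the linear functional $x\mapsto \xi^t.x.q^t$ is nontrivial, but a small ball inside $L$ may lie entirely on the nonpositive side of its kernel hyperplane, so the asserted $x$ need not exist (for example, take $Q=[0,1]^m$ and $L$ a small ball around a matrix all of whose columns have negative inner product with $\xi$). Fortunately this verification is dispensable: the hypothesis that $\Gamma_{Q,\infty}L$ is in Santal\'o position or centered already forces $o\in\operatorname{int}(\Gamma_{Q,\infty}L)$, since the Santal\'o point and the centroid of a full-dimensional convex body lie in its interior, and $\Gamma_{Q,\infty}L$ is full-dimensional because $L$ has nonempty interior and the polar cone of a full-dimensional $Q$ contains no line. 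With that paragraph trimmed, the proof is clean.
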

For an example of Theorem~\ref{t:hiBS} that uses the higher-order structure of the results, suppose $Q=[0,1]^m$ and $L\in\conbodio[n,m]$. Then, by writing $\R^n_i$ for the $i$th copy of $\M[n,1]\cong \R^n$ in the decomposition of $\M[n,m]\cong \R^{nm}$ into $m$ independent products of $\R^n$, one obtains $$\Gamma_{[0,1]^m, \infty} L = \text{conv}\left(\bigcup_{i=1}^m P_{\R^n_i} L\right) \longleftrightarrow \Gamma^\circ_{[0,1]^m, \infty} L = \bigcap_{i=1}^m \left(P_{\R_i^n} L\right)^\circ.$$
Thus, if $L\in\conbodio[n,m]$ is so that $\Gamma_{[0,1]^m, \infty} L$ is in Santal\'o position or is centered, we obtain from \eqref{eq:BS_3}
$$\Vol(L)^{\frac 1 m }\vol\left(\bigcap_{i=1}^m \left(P_{\R_i^n} L\right)^\circ\right) \leq \vol(\B) \Vol(\Pi_{Q,\infty}^\circ \B)^{\frac 1 m }.$$

\section{Applications to Function Spaces}
\subsection{Sobolev Spaces and Asymmetric LYZ Bodies}
\label{sec:functionspaces}

Throughout this section, let $f \colon \M[n,1] \to \R$ be a measurable function, and $p \geq 1$. Denote by $\|f\|_p$, the $L^p$ norm of $f$, i.e.,
$$\|f\|_p=\left(\int_{\M[n,1]}|f(v)|^pdv\right)^{1/p}.$$ Let $L^p(\M[n,1])$ be the set of such functions $f$ with $\|f\|_p < \infty$. The function $f$ is said to be non-constant if, for every $\alpha \in \R$, $\vol[n](\{v \in \M[n,1] \colon f(x) \neq \alpha \}) >0$. If $f$ is differentiable, then we denote by $\nabla f$ the gradient of $f$.  

A function $f \colon \M[n,1] \to \R$ is said to be an $L^p$-Sobolev function if $\nabla f$ exists in the weak sense, that is, there exists a measurable vector map $\nabla f:\M[n,1]\to\M[n,1]$ such that $|\nabla f| \in L^p(\M[n,1])$ and
\begin{equation}\int_{\M[n,1]} f(v) \text{ div} (\psi(v)) dv = - \int_{\M[n,1]} (\nabla f(v))^t. \psi(v) dv
\label{eq_LYZBorel}
 \end{equation}
for every compactly supported, smooth vector field $\psi:\M[n,1] \to \M[n,1]$ (see \cite{Evans}). By $W^{1,p}(\M[n,1])$ we mean the set of all $L^p$ Sobolev functions. 

Given a function $f \in W^{1,p}(\M[n,1])$ which is non-constant, Lutwak, Yang and Zhang showed \cite{LYZ06} via the Riesz representation theorem that for every $p\geq 1$, there exists a unique, finite Borel measure $\mu_{f,p}$ on $\s$, not concentrated on any closed hemisphere of $\s$, satisfying the following change of variables formula 
\begin{equation}\int_{\s}g(u)^pd\mu_{f,p}(u)=\int_{\M[n,1]}g(-\nabla f(v))^pdv
\label{eq:LYZw}
\end{equation}
for every $1$-homogeneous, non-negative function $g$ on $\M[n,1]\setminus\{o\}$. Notice for $p=1$ that the center of mass of $\mu_{f,1}$ is the origin. Indeed, with $\{e_{i,1}\}_{i=1}^n$ the canonical basis of $\M[n,1]$,
\begin{align*}
\int_{\s} u\, d\mu_{f,1}(u)&=\sum_{i=1}^n\left(\int_{\s} u^t.e_{i,1}\, d\mu_{f,1}(u)\right)e_{i,1}
\\
&=\sum_{i=1}^n\left(\int_{\M[n,1]} (\nabla f(v))^t.e_{i,1}\, dv\right)e_{i,1}.
\end{align*}
Using \eqref{eq_LYZBorel} then yields
\begin{align*}
\int_{\s} u\, d\mu_{f,1}(u) &=-\sum_{i=1}^n\left(\int_{\M[n,1]}  f(v) \; \mathrm{div} (e_{i,1}) dv\right)e_{i,1} 
\\
&=\sum_{i=1}^n 0 e_{i,1}= o_{n,1}.
\end{align*}
Thus, we apply Minkowski's existence theorem to $\mu_{f,1}$ and obtain a unique convex body with center of mass at the origin, denoted $\langle f\rangle_1$, which we call the \textit{first asymmetric LYZ body of $f$}.

When $p>1, p\neq n$, we appeal to the resolved $L^p$ Minkowski problem for \cite[Theorem 1.4]{HLYZ05} to obtain a unique \textit{$p$th asymmetric LYZ body of $f$}, denoted $\langle f \rangle_p\in \conbodo[n,1]$ such that 
\begin{equation}
\label{eq:Lp_min_prob}
h_{\langle f \rangle_p}(u)^{p-1}d\mu_{f,p}(-u)=d\sigma_{\langle f \rangle_p}(u).\end{equation} 
We note that, from the statement of \cite[Theorem 1.4]{HLYZ05}, $\langle f\rangle_p\in\conbodio[n,1]$ for $p>n$, and thus $d\mu_{f,p}(-u)=d\sigma_{\langle f \rangle_p,p}(u).$ However, the origin may be on the boundary of $\langle f \rangle_p$ in the range $1<p<n$, and so $\sigma_{\langle f \rangle_p,p}$ is not guaranteed to be well-defined on $\{u\in\s:h_{\langle f \rangle_p}(u)=0\}$. The equating of $\mu_{f,p}$ and $\sigma_{\langle f \rangle_p,p}$ on this set is thus \textit{interdit}. Despite this, we relate $\langle f \rangle_p$ and $\nabla f$ in the following lemma.

\begin{lemma}
\label{l:approx_LYZ}
    Fix $1<p$ and let $f\in W^{1,p}(\Rn)$. Then, the unique body $\langle f\rangle_p \in \conbodo[n,1]$ defined via \eqref{eq:Lp_min_prob} satisfies
   \begin{equation}\vol(\langle f \rangle_p)=\frac{1}{n}\int_{\M[n,1]}h_{\langle f \rangle_p}(\nabla f(v))^pdv.
\label{eq:LYZ}
\end{equation}
\end{lemma}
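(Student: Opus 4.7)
The plan is to unfold the volume of $\langle f\rangle_p$ via the classical Minkowski identity and then transport the resulting integral against $\sigma_{\langle f\rangle_p}$ first to $\mu_{f,p}$, using the defining relation \eqref{eq:Lp_min_prob}, and then to an integral over $\M[n,1]$, using the LYZ change of variables \eqref{eq:LYZw}. A key observation is that we can deliberately avoid the $L^p$-surface area measure $\sigma_{\langle f \rangle_p,p}$ altogether; this is what circumvents the well-definedness issue flagged in the excerpt when $1<p<n$ and the origin lies on $\partial \langle f\rangle_p$.

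\smallskip

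Concretely, I would begin with the elementary identity
\[
n\vol(\langle f\rangle_p) = \int_{\s} h_{\langle f\rangle_p}(u)\, d\sigma_{\langle f\rangle_p}(u),
\]
which holds for every convex body and uses only the ordinary surface area measure. Substituting the defining relation \eqref{eq:Lp_min_prob} gives
\[
n\vol(\langle f\rangle_p) = \int_{\s} h_{\langle f\rangle_p}(u)^p\, d\mu_{f,p}(-u);
\]
note that no division by $h_{\langle f\rangle_p}$ occurs, so the computation is valid irrespective of whether the origin is interior to $\langle f\rangle_p$.

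\smallskip

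Next, I would perform the change of variables $u\mapsto -u$ to write
\[
n\vol(\langle f\rangle_p) = \int_{\s} h_{\langle f\rangle_p}(-u)^p\, d\mu_{f,p}(u),
\]
and observe that $g(u):=h_{\langle f\rangle_p}(-u)=h_{-\langle f\rangle_p}(u)$ is a $1$-homogeneous, non-negative function on $\M[n,1]\setminus\{o\}$. Hence the LYZ change of variables formula \eqref{eq:LYZw} applied to $g$ yields
\[
\int_{\s} h_{\langle f\rangle_p}(-u)^p\, d\mu_{f,p}(u) = \int_{\M[n,1]} h_{\langle f\rangle_p}\bigl(-(-\nabla f(v))\bigr)^p dv = \int_{\M[n,1]} h_{\langle f\rangle_p}(\nabla f(v))^p\, dv,
\]
and dividing by $n$ gives \eqref{eq:LYZ}.

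\smallskip

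The only real subtlety is conceptual rather than technical: one must interpret \eqref{eq:Lp_min_prob} as an equation of Borel measures on $\s$, so that where $h_{\langle f\rangle_p}$ vanishes the left-hand side is the zero measure and $\sigma_{\langle f\rangle_p}$ is forced to vanish there as well. Once this is granted, no approximation argument is needed and the identity follows by four lines of manipulation; the hard part is simply the recognition that using the classical Minkowski identity (rather than its $L^p$ analogue) sidesteps the obstruction that $\sigma_{\langle f\rangle_p,p}$ need not be defined on $\{h_{\langle f\rangle_p}=0\}$.
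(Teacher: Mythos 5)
Your proof is correct and rests on the same three ingredients as the paper's argument — the classical Minkowski volume formula $n\vol(\langle f\rangle_p)=\int_{\s}h_{\langle f\rangle_p}\,d\sigma_{\langle f\rangle_p}$, the defining relation \eqref{eq:Lp_min_prob}, and the LYZ change of variables \eqref{eq:LYZw} — but chains them in the opposite direction. The paper starts from \eqref{eq:LYZw}, rewrites $d\mu_{f,p}(-u)$ as $h_{\langle f\rangle_p}(u)^{1-p}\,d\sigma_{\langle f\rangle_p}(u)$, and because this involves a \emph{negative} power of $h_{\langle f\rangle_p}$ it must first split the sphere integral into $\{h_{\langle f\rangle_p}=0\}$ and its complement (this is \eqref{eq:LYZ_2}--\eqref{eq:LYZ_3}). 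Your key observation — that \emph{multiplying} \eqref{eq:Lp_min_prob} by $h_{\langle f\rangle_p}$, i.e.\ using $h_{\langle f\rangle_p}\,d\sigma_{\langle f\rangle_p}(u)=h_{\langle f\rangle_p}(u)^{p}\,d\mu_{f,p}(-u)$, stays a valid equation of Borel measures on all of $\s$ — eliminates the need for the decomposition entirely, so your argument is genuinely cleaner. The one thing lost is that the paper's intermediate identity \eqref{eq:LYZ_3} is stated for a generic $1$-homogeneous $g$ and is reused later (for instance in \eqref{eq:LYZ_body_3}), which partly explains why the authors took the longer route; if one only wants Lemma~\ref{l:approx_LYZ}, your four-line version is preferable.
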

\begin{proof}
In \eqref{eq:LYZw}, replace the generic function $g(u)$ with $g(-u)$. Then, split the integral on the left-hand side over the zero set of $h_{\langle f \rangle_p}$ to obtain
\begin{equation}
\label{eq:LYZ_2}
\begin{split}
\int_{\M[n,1]}g(\nabla f(v))^p dv &= \int_{\{u\in\s:h_{\langle f \rangle_p}(u)=0\}}g(u)^pd\mu_{f,p}(-u)
\\
&\quad+\int_{\{u\in\s:h_{\langle f \rangle_p}(u)\neq 0 \}}g(u)^pd\mu_{f,p}(-u).
\end{split}
\end{equation}
We now apply \eqref{eq:Lp_min_prob} to the third integral in \eqref{eq:LYZ_2} to obtain
\begin{equation}
\label{eq:LYZ_3}
\begin{split}
\int_{\M[n,1]}g(\nabla f(v))^p dv &= \int_{\{u\in\s:h_{\langle f \rangle_p}(u)=0\}}g(u)^pd\mu_{f,p}(-u)
\\
&+\int_{\{u\in\s:h_{\langle f \rangle_p}(u)\neq 0\}}g(u)^ph_{\langle f \rangle_p}(u)^{1-p}d\sigma_{\langle f \rangle_p}(u).
\end{split}
\end{equation}
Next, replace the generic function $g$ with $h_{\langle f \rangle_p}$ in \eqref{eq:LYZ_3} to obtain
\begin{equation}
\label{eq:LYZ_4}
\int_{\M[n,1]}h_{\langle f \rangle_p}(\nabla f(v))^p dv =\int_{\s}h_{\langle f \rangle_p}(u)d\sigma_{\langle f\rangle_p}(u).
\end{equation}
Finally, conclude with \eqref{eq:mixed_0}.
\end{proof}

From the resolved even $L^p$ Minkowski problem for $p \geq 1, p\neq n$ \cite[Theorem 3.3]{LE93}, there exists a unique symmetric convex body, the \textit{$p$th LYZ body of $f$}, which we will denote as $\langle f \rangle_p^e$, such that $$\frac{d\mu_{f,p}(u) + d\mu_{f,p}(-u)}{2} = d\sigma_{\langle f \rangle_p^e,p}(u).$$ By definition, these bodies satisfy \eqref{eq:LYZw} when $g$ is even and $\mu_{f,p}$ is replaced by $\sigma_{\langle f \rangle_p^e,p}$. The bodies $\langle f \rangle_p^e$ were first studied in \cite{LYZ06} and given their epithet by Ludwig \cite{LUD}. We note that, if $\langle f \rangle_p\in\conbodio[n,1]$, then $\langle f \rangle_p^e$ is its origin symmetric $L^p$ Blaschke body (see \cite[pg 148]{LE93}).

We next define $m$th order projection bodies $\LYZ f$ of a function $f\in W^{1,p}(\Rn)$.

\begin{definition}\label{d:generalLYZbody_2} For $1 \leq p,p\neq n,m\in\N$ and $Q\in\conbodo[1,m]$, we define the $(L^p,Q)$-\textit{projection body of non-constant $f\in W^{1,p}(\M[n,1])$}, $\LYZ f$, via
\begin{equation}
\label{eq:LYZ_body_2}
h_{\LYZ f}(\theta)^p = \int_{\M[n,1]}h_Q((\nabla f(v))^t.\theta )^p dv.
\end{equation}\end{definition}

Notice if $Q$ is symmetric, then $\LYZ f =\P \langle f \rangle_p^e$. For a generic $Q\in \conbodo[1,m]$, if $p=1$ or $p>n$, then $\LYZ f = \P \langle f \rangle_p$. However, for $1<p<n$, this is not necessarily the case due to the aforementioned possibility of $o\in \partial \langle f \rangle_p$. They are still related via \eqref{eq:LYZ_3}:
\begin{equation}
\label{eq:LYZ_body_3}
\begin{split}
\int_{\M[n,1]}h_Q((\nabla f(v))^t.\theta )^p dv &= \int_{\{u\in\s:h_{\langle f \rangle_p}(u)=0\}}h_Q(u^t.\theta )^pd\mu_{f,p}(-u)
\\
&+\int_{\{u\in\s:h_{\langle f \rangle_p}(u)\neq 0\}}h_Q(u^t.\theta )^pd\sigma_{\langle f \rangle_p,p}(u).
\end{split}
\end{equation}
Additionally, they are related via Petty's projection inequality. We of course set $\LYZP f := (\LYZ f)^\circ$.
\begin{proposition}
    For $1 \leq p,p\neq n, f\in W^{1,p}(\M[n,1]), m\in\N$ and $Q\in\conbodo[1,m]$, we have
    \begin{equation}
\label{eq:LYZ_petty_approx_final}
 \Vol(\LYZP f )  \vol(\langle f \rangle_p)^{\frac{nm} p - m}  \leq \Vol(\PP \B ) \vol(\B)^{\frac{nm} p - m}. 
\end{equation}
\end{proposition}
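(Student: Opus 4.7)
The plan is to reduce the proposition to Theorem~\ref{t:PPIGeneral} applied to $\langle f\rangle_p$ itself, after ensuring that the origin lies in the interior of $\langle f\rangle_p$ (possibly via approximation). The key observation is that whenever $\langle f\rangle_p\in\conbodio[n,1]$, the $L^p$ Minkowski identity \eqref{eq:Lp_min_prob} simplifies to $d\mu_{f,p}(-u)=d\sigma_{\langle f\rangle_p,p}(u)$ on all of $\s$, since $h_{\langle f\rangle_p}>0$ throughout. Comparing Definition~\ref{d:generalprojectionbody} with Definition~\ref{d:generalLYZbody_2} via the change of variables \eqref{eq:LYZw} then yields $\LYZ f=\P\langle f\rangle_p$, and the desired inequality is exactly Theorem~\ref{t:PPIGeneral} applied to $K=\langle f\rangle_p$.

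Three of the four cases reduce at once. For $p>n$, \cite[Theorem 1.4]{HLYZ05} already guarantees $\langle f\rangle_p\in\conbodio[n,1]$. For $p=1$, the body $\langle f\rangle_1$ is a convex body with centroid at the origin, and since the centroid of a convex body always lies in its interior, $\langle f\rangle_1\in\conbodio[n,1]$ automatically. For $1<p<n$ with $\langle f\rangle_p\in\conbodio[n,1]$, the argument applies without modification.

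The remaining case --- $1<p<n$ with $o\in\partial\langle f\rangle_p$ --- is the main obstacle, since then $\sigma_{\langle f\rangle_p,p}$ fails to be a finite Borel measure on $\s$ and \eqref{eq:LYZ_body_3} shows that $\LYZ f$ acquires a singular contribution from $\{h_{\langle f\rangle_p}=0\}$ that has no direct counterpart for $\P\langle f\rangle_p$. My plan is to approximate at the level of measures: letting $\lambda_{\s}$ denote spherical Lebesgue measure on $\s$, I would set
\[
\nu_k := \left(1-\frac{1}{k}\right)\mu_{f,p}(-\,\cdot\,) + \frac{1}{k}\,\lambda_{\s}.
\]
Each $\nu_k$ is a finite Borel measure on $\s$, not concentrated on any closed hemisphere, with uniform positive lower bound $\nu_k\geq (1/k)\lambda_{\s}$; this lower bound is precisely what forces the solution $K_k$ of the $L^p$ Minkowski problem $d\sigma_{K_k,p}=d\nu_k$ to lie in $\conbodio[n,1]$. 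Theorem~\ref{t:PPIGeneral} applied to $K_k$ gives
\[
\Vol(\PP K_k)\,\vol(K_k)^{\frac{nm}{p}-m}\leq \Vol(\PP\B)\,\vol(\B)^{\frac{nm}{p}-m}.
\]

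To finish, I would let $k\to\infty$. Since $\nu_k\to\mu_{f,p}(-\,\cdot\,)$ in total variation, continuity of the $L^p$ Minkowski problem gives $K_k\to\langle f\rangle_p$ in the Hausdorff metric, so $\vol(K_k)\to\vol(\langle f\rangle_p)$. For each fixed $\theta\in\S$, weak convergence of $\nu_k$ applied to the bounded continuous integrand $u\mapsto h_Q(u^t.\theta)^p$ gives $h_{\P K_k}(\theta)^p\to h_{\LYZ f}(\theta)^p$, and a uniform bound promotes this to Hausdorff convergence $\P K_k\to\LYZ f$ (hence $\PP K_k\to\LYZP f$), yielding $\Vol(\PP K_k)\to\Vol(\LYZP f)$. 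Passing to the limit in the displayed inequality then delivers the proposition.
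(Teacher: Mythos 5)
Your reduction works for $p=1$, $p>n$, and the case $o\in\mathrm{int}\langle f\rangle_p$: in each of those cases $\LYZ f=\P\langle f\rangle_p$ and Theorem~\ref{t:PPIGeneral} applies directly. The gap is entirely in the case $1<p<n$ with $o\in\partial\langle f\rangle_p$, and it is in the one claim you rely on to get started: that the lower bound $\nu_k\geq\tfrac1k\lambda_{\s}$ ``forces the solution $K_k$ of $d\sigma_{K_k,p}=d\nu_k$ to lie in $\conbodio[n,1]$.'' This is unjustified and in fact false for $1<p<(n+1)/2$. Take $K=\B+e_1$, so $h_K(u)=1+u_1\geq 0$ with equality only at $-e_1$, and $\sigma_K=\lambda_{\s}$. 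Then $\mu:=\sigma_{K,p}=(1+u_1)^{1-p}\,\lambda_{\s}$ is finite (the singularity at $-e_1$ has order $|u+e_1|^{2(1-p)}$, integrable against the $(n-1)$-dimensional spherical measure whenever $p<(n+1)/2$), is bounded below by $2^{1-p}\lambda_{\s}$, and is not concentrated on any closed hemisphere; yet $K$ satisfies $h_K^{p-1}\mu=\sigma_K$ and $o\in\partial K$. By the uniqueness in $\conbodo[n,1]$ of the $L^p$ Minkowski solution, $K$ is the solution, so a uniform positive lower bound on the data does \emph{not} push the origin into the interior. Hence your intermediate bodies $K_k$ need not be in $\conbodio[n,1]$, you cannot invoke Theorem~\ref{t:PPIGeneral} for them, and the argument stalls at the first step.

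The paper's route through the same bottleneck is different and does close the gap: it approximates $\mu_{f,p}$ by \emph{discrete} measures not concentrated on any closed hemisphere and invokes the Chou--Wang resolution of the discrete $L^p$ Minkowski problem, which produces polytopes $P_k(f,p)\in\conbodio[n,1]$ --- the origin is automatically interior for polytopal solutions, precisely the property your absolutely continuous perturbation cannot guarantee. The limiting step is then justified via \cite[Lemmas 2.2, 2.3]{LYZ04} (uniform boundedness and non-degeneracy of the $P_k$), Blaschke selection, and the uniform convergence on $\S$ of the convex functions $h_{\P P_k(f,p)}$; your appeal to ``continuity of the $L^p$ Minkowski problem'' for general measures, while in the right spirit, would also need to be replaced by this kind of compactness argument. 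If you want to salvage your scheme, replace the additive mollification $\nu_k=(1-\tfrac1k)\mu_{f,p}(-\cdot)+\tfrac1k\lambda_{\s}$ by a discretization of $\mu_{f,p}(-\cdot)$ and cite Chou--Wang; the rest of your limiting argument then goes through essentially as you wrote it.
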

\begin{proof}
For $p=1$ and $p>n$, the inequality follows by applying \eqref{e:PPIGeneral} to $\langle f \rangle_p$. For $1<p<n$, we do the following. 
We begin by recalling the following result from Chou and Wang \cite{CW06}: if $\nu$ is a discrete measure on $\s$ not concentrated in any closed hemisphere, then for $1<p,p\neq n$, there exists a convex polytope $P(\nu,p)\in\conbodio[n,1]$ such that $\nu = \sigma_{P(\nu,p),p}$. Therefore, following \cite[pg. 392-393]{Sh1}, we approximate $\mu_{f,p}$ with a sequence of discrete measures $\nu_k$ that are not concentrated in any closed hemisphere. Thus, we have for each $k$ that $d\nu_k(-u)=d\sigma_{P_k(f,p),p}(u)$ for some $P_k(f,p)\in \conbodio[n,1]$.

    From the definition of weak convergence, we then have that for every $v\in \s$
    $$\int_{\s}(u^t.v)_+d\nu_k(-u) \to \int_{\s}(u^t.v)_+d\mu_{f,p}(-u) >0.$$
    But, this is a sequence of convex functions (in $v$) converging pointwise to a convex function. Thus, the convergence is uniform \cite[Theorem 10.8]{RTR70}. We deduce that there exists a $c>0$ independent of $k$ so that
    $$\int_{\s}(u^t.v)d\nu_k(-u) > c >0.$$
    It follows from \cite[Lemmas 2.2 and 2.3]{LYZ04} that each $P_k(f,p)$ has positive volume and is contained in $RB_2^n$, where $R>0$ is independent of $k$. Therefore, from Blaschke's selection theorem \cite[Theorem 1.8.6]{Sh1}, there exists a subsequence, which we still denote as $P_k(f,p)$ such that $P_k(f,p)\to \langle f \rangle_{p}$ in the Hausdorff metric. Then, from Theorem~\ref{t:PPIGeneral} applied to $P_k(f,p)$, we obtain
\begin{equation}
\label{eq:LYZ_petty_approx}
\Vol(\PP \!P_k(\!f,p)\! ) \vol(\!P_k(f,p)\!)^{\frac{nm} p \!-m} 
\!\!\leq\! \Vol(\PP\! \B ) \vol(\!\B\!)^{\frac{nm} p \!- m}\!.
\end{equation}
From the polar formula for volume \eqref{eq:polar} and the relation between $\nu_k$ and $P_k(f,p)$, we have
\begin{equation}
\label{eq:LYZ_polytopes_volume}
    \Vol(\PP P_k(f,p) ) = \frac{1}{nm}\int_{\S}\left( \int_{\s} h_Q(u^t.\theta)^p d\nu_k(-u)\right)^{-\frac{nm}{p}}d\theta.
\end{equation}
Notice that, for a fixed $k$, the function
$$\theta\mapsto h_{\P P_k(f,p)}(\theta) = \int_{\s} h_Q(u^t.\theta)^p d\nu_k(-u)$$
 is strictly positive and Lipschitz continuous on $\S$, and therefore obtains a strictly positive minimum. This is true also for $\theta\mapsto h_{\LYZ f}(\theta)$. Consequently, since this is a sequence of strictly positive convex functions converging uniformly to a strictly positive convex function, we may take the limit in $k$ and apply the dominated convergence theorem to \eqref{eq:LYZ_polytopes_volume} to obtain
\begin{align*}
    \lim_{k\to\infty}\Vol(\PP P_k(f,p) ) &= \frac{1}{nm}\int_{\S}\left( \int_{\s} h_Q(u^t.\theta)^p d\mu_{f,p}(-u)\right)^{-\frac{nm}{p}}d\theta
    \\
    &=\frac{1}{nm}\int_{\S}\left( \int_{\M[n,1]} h_Q((\nabla f(v))^t.\theta)^p dv\right)^{-\frac{nm}{p}}d\theta
    \\
    &=\Vol(\LYZP f).
\end{align*}
Thus, the desired inequality follows from \eqref{eq:LYZ_petty_approx} by taking the limit in $k$.

\end{proof}

 \subsection{The $m$th-order $L^p$ affine Sobolev inequalities} \label{sec:functionalineqs}

In this section, we prove Theorem~\ref{t:GeneralAffineSobolev}. Before proving the former, we require the following lemma. For $f\in W^{1,p}(\M[n,1]),$ its $L^p$ Sobolev norm is given by
$$\|f\|_{W^{1,p}}^p = \|f\|^p_p + \|\nabla f\|^p_p.$$
One then denotes $W_0^{1,p}(\M[n,1])$ as the closure of the vector space of compactly supported functions with respect to the $L^p$ Sobolev norm.

\begin{lemma}[Anisotropic Sobolev inequality]
\label{l:asob}
Let $K \in \conbodo[n,1]$, fix $p\in [1,n)$ and let $p^* = \frac{pn}{n-p}$. Consider a function $f\in W_0 ^{1,p}(\M[n,1])$ non-constant. Then 
$$a_{p,n}\omega_n^{-\frac{1}{n}}\|f\|_{p^\star}\vol(K)^\frac 1n \leq  \left(\int_{\M[n,1]}h_K (\nabla f(z))^pdz\right)^\frac 1p .$$
Let $K$ be an origin symmetric convex body. If $p>1$, define the function
$$f_{p,K}(x):=\left(\alpha_p+\|x\|_K^\frac p {p-1}\right)^{-\frac{n-p} p }.$$
Here, $\alpha_p$ is so that $\|f_{p,K}\|_{p^\star}=1$. Then, for $p>1$, there is equality in the above inequality if and only if there exist $\alpha,\beta\!>\!0$ and $x_0\!\in\!\M[n,1]$ such that \begin{equation}
\label{eq:lsob_eq}
f(x)=\alpha f_{p,\beta K}(x-x_0).\end{equation}
\end{lemma}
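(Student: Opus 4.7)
The plan is to recognize this statement as the sharp anisotropic $L^p$ Sobolev inequality (for $p > 1$, originally due to Alvino, Ferone, Trombetti and Lions \cite{AFTL97}), which reduces to the classical Aubin--Talenti inequality when $K = \B$, since $\vol(\B)^{1/n} = \omega_n^{1/n}$ absorbs the $\omega_n^{-1/n}$ prefactor. My approach would follow the optimal mass transport proof of Cordero-Erausquin, Nazaret and Villani \cite{CENV04}, which delivers both the sharp constant and the equality characterization in a single stroke.

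For $p > 1$: by density of $C_c^\infty(\M[n,1])$ in $W_0^{1,p}(\M[n,1])$, I would first reduce to smooth, non-negative, compactly supported $f$, and rescale so that $\|f\|_{p^*} = \|f_{p,K}\|_{p^*}$. Let $T = \nabla \phi$, with $\phi$ convex, be the Brenier map pushing the probability density $f^{p^*}\,dz$ onto $f_{p,K}^{p^*}\,dy$; the Monge--Amp\`ere equation then gives $f(z)^{p^*} = f_{p,K}(T(z))^{p^*} \det D^2_A \phi(z)$ almost everywhere. Starting from $\int f_{p,K}^{p^*(1 - 1/n)}\,dy$ and pulling back through $T$, an integration by parts converts the appearance of $\nabla \log f_{p,K}(T(z))$ into a term involving $\Delta \phi$. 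I would then chain three inequalities: the AM--GM bound $\det(D^2_A\phi)^{1/n} \leq \Delta \phi / n$ applied eigenvalue-wise; the anisotropic Legendre duality $\langle v, y\rangle \leq h_K(v)\cdot h_{K^\circ}(y)$ applied to $\nabla\|\cdot\|_K^{p/(p-1)}(T(z))$, using the explicit form of $f_{p,K}$; and finally H\"older with conjugate exponents $p$ and $p/(p-1)$. This yields the claimed inequality with the constant $a_{p,n}\omega_n^{-1/n}\vol(K)^{1/n}$.

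For $p = 1$: I would apply the BV coarea formula $\int h_K(Df) = \int_0^\infty P_K(E_t)\,dt$, where $E_t = \{|f| > t\}$ and $P_K(E):= \int_{\partial^* E} h_K(\nu_E)\,d\mathcal{H}^{n-1}$ denotes the anisotropic (Wulff) perimeter. The Wulff isoperimetric inequality $P_K(E) \geq n\vol(K)^{1/n}\vol(E)^{(n-1)/n}$ applied to each superlevel set, combined with the layer-cake representation of $\|f\|_{n/(n-1)}$ (the classical Federer--Fleming--Maz'ya derivation of the $L^1$ Sobolev inequality from the isoperimetric inequality), gives the result with constant $a_{1,n}\omega_n^{-1/n}\vol(K)^{1/n} = n\vol(K)^{1/n}$.

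The main obstacle is the equality analysis for $p > 1$. Equality in the AM--GM step forces $D^2_A\phi = \lambda I$ a.e.\ on the support of $f$, so $T$ must be an affine dilation $x \mapsto \beta x + x_0$; equality in the duality bound forces $\nabla f(x)$ to be aligned with the outer normal to the $\|\cdot\|_K$-sphere through $x - x_0$; and equality in H\"older fixes the radial profile, together yielding $f(x) = \alpha f_{p, \beta K}(x - x_0)$. The delicate point is upgrading these a.e.\ conditions to a pointwise classification, which requires enough regularity of the Brenier map on the support of $f$. I would handle this by mollifying the target density, invoking Caffarelli's interior $C^{2,\alpha}$ regularity theorem for the Monge--Amp\`ere equation on the mollified problem, and then passing to the limit using lower semicontinuity of the Sobolev functional.
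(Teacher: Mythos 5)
The paper does not re-derive this lemma from scratch. It simply cites the literature: the inequality for $K\in\conbodio[n,1]$ is attributed to Alvino--Ferone--Trombetti--Lions \cite{AFTL97} (general $K$ with $o$ in the interior) and Cordero-Erausquin--Nazaret--Villani \cite{CENV04} (symmetric $K$, with equality conditions), and the $p=1$ case to Federer--Fleming, Maz'ja, and Gromov. The paper's only actual argument is the one-paragraph extension from $K\in\conbodio[n,1]$ to $K\in\conbodo[n,1]$, i.e.\ allowing $o\in\partial K$: pick $x_i\to o$ with $K-x_i\in\conbodio[n,1]$, use translation invariance of $\vol(K)$ to keep the left-hand side fixed, and pass to the limit on the right via dominated convergence (boundedness of the integrand being checked through the change of variables \eqref{eq:LYZw}). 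Your proposal, by contrast, re-derives the classical cases in full --- the CENV04 mass-transport argument for $p>1$ and the coarea/Wulff isoperimetric route for $p=1$ --- which is substantially more work than the paper undertakes, and the outline is sound as a re-derivation.

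There is a genuine gap, however: you never address the extension to $K\in\conbodo[n,1]$ with $o\in\partial K$, which is precisely the step the paper proves. For $p>1$ your mass-transport argument requires $o\in\operatorname{int}K$ in two places: you form the target profile $f_{p,K}$ from $\|\cdot\|_K$, which is $+\infty$ outside the cone $\bigcup_{t>0}tK$ when the origin lies on the boundary, and you invoke the duality $\langle v,y\rangle\le h_K(v)h_{K^\circ}(y)$ which needs $K^\circ$ to be bounded. So your $p>1$ argument, as written, only proves the inequality for $K\in\conbodio[n,1]$, not the stated class $\conbodo[n,1]$. You would need to append the same translate-and-limit argument the paper uses (or an equivalent one). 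For $p=1$ your Wulff-perimeter argument happens to go through directly since the anisotropic isoperimetric inequality only needs $h_K\ge 0$, but it is worth noting explicitly that this is why that case does not require the extension.
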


In Lemma~\ref{l:asob}, one has $a_{1,n}=n\omega_n^\frac{1}{n}$, and, for $p>1$,
\begin{equation}
    a_{p,n} = n^\frac{1}{p}\left(\frac{n-p}{p-1}\right)^\frac{p-1}{p}\left(\frac{\omega_n}{\Gamma(n)}\Gamma\left(\frac{n}{p}\right)\Gamma\left(n+1-\frac{n}{p}\right)\right)^\frac{1}{n},
    \label{eq:sobolev_cons}
\end{equation}
which, in fact, is the sharp constant from the Aubin-Talenti $L^p$ Sobolev inequality \cite{Aubin1,Talenti1}. When $K = B_2^n$, Lemma~\ref{l:asob} was done in \cite{FF60} by Federer and Fleming  and \cite{VGM60} by  Maz'ja. When $K$ is origin symmetric, Lemma~\ref{l:asob} was done in \cite{MS86} by Gromov, and  extended by  Cordero-Erausquin, Nazaret and Villani in \cite{CENV04} with equality conditions. In \cite{AFTL97}, Alvino, Ferone, Trombetti and Lions showed the inequality for the non-symmetric case of $K\in\conbodio[n]$. The version of Lemma~\ref{l:asob} that we present here for $K\in \conbodo[n,1]$ follows. Indeed, if $\partial K$ contains the origin, let $\{x_i\}$ be a sequence so that $x_i\to o$ and $(K-x_i)\in\conbodio[n,1]$. The lemma then follows from the translation invariance of the left-hand side of the inequality and using the dominated convergence theorem (it is easy to see that the right-hand side is bounded by applying the change of variables \eqref{eq:LYZw}). 

The reader may wonder about equality in the case when $p=1$ in Lemma~\ref{l:asob}. Here, the extremal function is a multiple of $\chi_L$, where $L$ is any convex body homothetic to $K$. However, characteristic functions are not in $W_0^{1,1}(\Rn)$, instead they are in $BV(\Rn)$ (the so-called functions of bounded variation). An extension of Lemma~\ref{l:asob} to the class of functions of bounded variations was done in \cite{HLPRY23} by expanding on \cite{AFTL97, CENV04}. The asymmetric LYZ bodies also exist for $f\in BV(\Rn)$ (see \cite{HLPRY23,KS21,TW12}). 

As an example of Lemma~\ref{l:asob}, we can apply it to $K=\langle f \rangle_p$ and obtain via Lemma~\ref{l:approx_LYZ} that \begin{equation}
\label{eq:bound_LYZ}
\vol(\langle f\rangle_p)^{\frac{1}{p^\star}}\geq \frac{a_{p,n}}{n^\frac{1}{p}\omega_n^{\frac{1}{n}}}\|f\|_{p^\star}.\end{equation}
In fact, combining \eqref{eq:bound_LYZ} with Proposition~\ref{eq:LYZ_petty_approx_final} immediately yields the inequality in Theorem~\ref{t:GeneralAffineSobolev}. To also show sharpness, we will instead use a different approach utilizing Theorem~\ref{t:NewLpBPC}. We note that Theorem~\ref{t:GeneralAffineSobolev} can be easily extended to $f\in BV(\Rn)$ when $p=1$. For ease of presentation, we suppress this fact and focus only on $p>1$.

\noindent {\bf Theorem~\ref{t:GeneralAffineSobolev}.}{\it
  $\,$ Fix $m,n \in \N,$ $p \in [1,n)$, and $Q \in \conbodo[1,m]$. Set $p^* = \frac{pn}{n-p}$. Consider a non-constant function $f\in W_0 ^{1,p}(\M[n,1])$. Then:
\begin{equation}
\label{eq:the_sob_again}
d_{n,p}(Q)\left(\int_{\S} \left( \int_{\M[n,1]} h_Q((\nabla f(z))^t.\theta)^pdz \right)^{-\frac{nm} p } d\theta\right)^{-\frac 1 {nm}} \geq a_{p,n}\|f\|_{p^*},
\end{equation}
where 
\[
d_{n,p}(Q) := (n\omega_n)^{\frac{1}{p}}\left(nm\Vol(\PP \B )\right)^\frac 1 {nm}.
\]
If $p>1$, then there is equality if and only if there exist $\alpha>0$, $A\in GL_n(\R)$ and $v_0\in \M[n,1]$ such that
\[
f(v)=(\alpha+|A.(v-v_0)|^{\frac p {p-1}})^{-\frac{n-p} p }. \]
If $p=1$, then the inequality can be extended to functions of bounded variation, in which case equality holds if and only if $f$ is a multiple of $\chi_E$ for some ellipsoid $E\in\conbod[n,1]$.}
\begin{proof}

For ease, we set
\begin{equation}
\begin{split}
    \mathcal{E}_p(Q,f) :&= d_{n,p}(Q)\left(\!\int_{\S}\! \left(\! \int_{\M[n,1]} h_Q((\nabla f(z))^t.\theta)^pdz \right)^{-\frac{nm} p }\! d\theta\!\right)^{-\frac 1 {nm}}
    \\
    &=d_{n,p}(Q)(nm)^{-\frac{1}{nm}}\Vol(\LYZP f)^{-\frac{1}{nm}},
    \label{eq:relation_E}
    \end{split}
\end{equation}
where the last equality follows from applying polar coordinates \eqref{eq:polar} to the volume of the body $\LYZP f$ defined via Definition~\ref{d:generalLYZbody_2}. We note that the proof of the inequality and the equality characterization for $p=1$ is similar to the proof of the equality conditions in the affine Sobolev inequality in the prequel work \cite{HLPRY23}, so we focus on $p>1$. First, we claim that
\begin{equation}
\begin{split}
\label{eq:yet_another_formula}
&d_{n,p}(Q)^{-1}\mathcal{E}_p(Q,f) 
\\
&= \left(\Vol(\LYZP f)(nm+p) \int_{\M[n,1]}h_{\G \LYZP f}(\nabla f(x))^p dx\right)^{-\frac{1}{nm}}.
\end{split}
\end{equation}
Indeed, we have from Fubini's theorem and polar coordinates
    \begin{align*}
        &\int_{\M[n,1]}h_{\G \LYZP f}(\nabla f(x))^p dx 
        \\
        &= \frac 1 {\Vol(\LYZP f)}\int_{\LYZP f} \int_{\M[n,1]} h_Q((\nabla f(y)^t.x)^p dy dx
        \\
        &=\frac 1 {\Vol(\LYZP f)}\int_{\LYZP f} \|x\|^p_{\LYZP f} dx
        \\
        &=\frac 1 {\Vol(\LYZP f)} \int_{\S} \|\theta\|^p_{\LYZP f} \int_0^{\|\theta\|^{-1}_{\LYZP f}}r^{nm-1+p}drd\theta
        \\
        &=\frac 1 {\Vol(\LYZP f)(nm+p)} \int_{\S} \|\theta\|^{-nm}_{\LYZP f} d\theta
        \\
        &=\frac 1 {\Vol(\LYZP f)(nm+p)} \int_{\S}\left(\int_{\M[n,1]}h_Q((\nabla f(x))^t.\theta )^p dx\right)^{-\frac{nm}{p}}d\theta
        \\
        &=\frac{\mathcal{E}_p(Q,f)^{-nm}d_{n,p}(Q)^{nm}}{\Vol(\LYZP f)(nm+p)}.
    \end{align*}
 Next, we have \begin{equation}
 \label{eq:ready_for_asbo}
 \mathcal{E}^p_p(Q,f) \geq \left(\frac{\omega_n}{\vol(\G \LYZP f)}\right)^\frac{p}{n}\int_{\M[n,1]}h_{\G \LYZP f}(\nabla f(x))^p dx\end{equation}
    with equality if and only if $\G \LYZP f = E$ for some origin symmetric ellipsoid $E\in\conbodio[n]$. Indeed, by using, \eqref{eq:yet_another_formula}, \eqref{eq:relation_E}, Theorem~\ref{t:NewLpBPC}, and \eqref{eq:sharp}:
    \begin{align}
        &\mathcal{E}^p_p(Q,f) = \mathcal{E}_p(Q,f)^{nm+p} \mathcal{E}_p(Q,f)^{-nm}
        \\
        & = \mathcal{E}_p(Q,f)^{nm+p} d_{n,p}(Q)^{-nm} \Vol(\LYZP f)(nm+p)
        \\
        &\quad\quad\quad\times\int_{\M[n,1]}h_{\G \LYZP f}(\nabla f(x))^p dx
        \\
        &=d_{n,p}(Q)^p(nm)^{-\frac{nm+p}{nm}}(nm+p)\vol(\LYZP f)^{-\frac{p}{nm}}
        \\&\quad\quad\quad\times\int_{\M[n,1]}h_{\G \LYZP f}(\nabla f(x))^p dx
        \\
        &\geq \omega_n\frac{nm+p}{m}\left(\frac{\vol(\G \PP \B)}{\vol(\G \LYZP f)}\right)^\frac{p}{n}\int_{\M[n,1]}h_{\G \LYZP f}(\nabla f(x))^p dx,
    \end{align}
and then \eqref{eq:ready_for_asbo} follows from \eqref{eq:elipp_cal}. We now use Lemma~\ref{l:asob} in the case $K=\G \LYZP f$: from \eqref{eq:ready_for_asbo}, we obtain
$$\mathcal{E}_p(Q,f) \geq a_{p,n}\|f\|_{p^\star},$$
with equality if and only if $\G \LYZP f = E$ and $f$ has the form \eqref{eq:lsob_eq} with $K=E$. The claim follows, since $\|x\|_E=|A.x|$ for some $A\in GL_n(\R)$.
\end{proof}

{\bf Acknowledgments :} We would like to thank Yiming Zhao for his comments concerning the construction of the $p$th asymmetric LYZ bodies in the range $1\leq p <n$. We sincerely thank the referees for their excellent comments, which have greatly improve both the presentation and the overall quality of our work. 

{\bf Funding:} The first named author was supported Grant RYC2021-032768-I, funded by the Ministry of Science and Innovation/State Research Agency/10.13039/ 501100011033 and by the E.U. NextGenerationEU/Recovery, Transformation and Resilience Plan. The second named author was supported in part by the U.S. NSF Grant DMS-2000304, by the Chateaubriand Scholarship offered by the French embassy in the United States, and by the Fondation Sciences Math\'ematiques de Paris Postdoctoral program. This material is based upon work supported by the National Science Foundation under Grant No. DMS-1929284 and Simons Foundation under Grant No. 815891 while the fourth named author was in residence at the Institute for Computational and Experimental Research in Mathematics in Providence, RI, during the Discrete Optimization: Mathematics,  Algorithms, and Computation semester program in Spring 2023. The fifth named author was supported in part by NSERC.

\bibliographystyle{acm}
\bibliography{references}
\end{document}